\definecolor{brown}{rgb}{.6,0,0}
\tikzset{fontscale/.style = {font=\relsize{#1}}}
\newcommand{\Z}{\mathbb{Z}} %
\newcommand{\ZZ}{\mathbb{Z}} %
\newcommand{\Q}{\mathbb{Q}} %
\newcommand{\QQ}{\mathbb{Q}} %
\newcommand{\QQbar}{\overline{\mathbb{Q}}} %
\newcommand{\R}{\mathbb{R}} %
\newcommand{\C}{\mathbb{C}} %
\newcommand{\CC}{\mathbb{C}} %
\newcommand{\p}{\mathfrak{p}} %
\newcommand{\PP}{\mathfrak{P}} %
\newcommand{\Cl}{\operatorname{Cl}}
\DeclareMathOperator{\coker}{coker}
\newcommand{\OO}{\mathcal{O}}
\newcommand{\ag}{\mathfrak{a}} %
\newcommand{\pg}{\mathfrak{p}} %
\newcommand{\cond}{\mathfrak{f}} %
\newcommand{\condt}{\cond_{\rm tame}}
\newcommand{\condw}{\cond_{\rm wild}}
\newcommand{\Nm}{\mathrm{N}}
\newcommand{\Res}{\operatorname{Res}}
\newcommand{\norm}{\mathrm{N}}
\newcommand{\Reg}{\mathrm{Reg}}
\newcommand{\Hsg}{\mathcal{H}}
\newcommand{\order}{\mathcal{O}}
\newcommand{\rela}{\mathcal{R}}
\newcommand{\sto}{\leftarrow}
\newcommand{\clcomp}{\mathcal{C}}
\newcommand{\hR}{\mathrm{HR}}
\newcommand{\F}{\mathbb{F}} %
\newcommand{\FF}{\mathbb{F}} %
\newcommand{\triv}{\mathbf{1}}
\DeclareMathOperator{\Ind}{Ind}
\DeclareMathOperator{\SL}{SL}
\DeclareMathOperator{\Id}{Id}
\DeclareMathOperator{\rad}{rad}
\DeclareMathOperator{\Hom}{Hom}
\DeclareMathOperator{\Ann}{Ann}
\DeclareMathOperator{\Gal}{Gal}
\DeclareMathOperator{\mat}{M}
\DeclareMathOperator{\Aut}{Aut}
\DeclareMathOperator{\Logemb}{\mathcal{L}}
\DeclareMathOperator{\height}{h}
\theoremstyle{definition}
\newtheorem{introtheorem}{Theorem}[section]
\newtheorem{introprop}[introtheorem]{Proposition}
\newtheorem{theorem}{Theorem}[section]
\newtheorem{definition}[theorem]{Definition}
\newtheorem{algorithm}[theorem]{Algorithm}
\newtheorem{lemma}[theorem]{Lemma}
\newtheorem{proposition}[theorem]{Proposition}
\newtheorem{corollary}[theorem]{Corollary}
\newtheorem{example}[theorem]{Example}
\newtheorem{remark}[theorem]{Remark}
\newtheorem*{theorem*}{Theorem}
\newtheorem*{proposition*}{Proposition}
\newtheorem*{notation*}{Notation}
\newcommand{\noell}{p}
\providecommand\@dotsep{5}
\def\listtodoname{List of Todos}
\def\listoftodos{\@starttoc{tdo}\listtodoname}
\begin{document}

\title{Norm relations and computational problems in number fields}

\author{Jean-Fran\c{c}ois Biasse}
\address{Department of Mathematics and Statistics\\
University of South Florida, 4202 East Fowler Ave, CMC342, Tampa, FL 33620-5700, USA}
\email{biasse@usf.edu}
\author{Claus Fieker}
\address{Fachbereich Mathematik, Technische Universitat Kaiserslautern, 67663 Kaiserslautern, Germany}
\email{fieker@mathematik.uni-kl.de}
\author{Tommy Hofmann}
\address{Department Mathematik, Universität Siegen, Postfach, 57068 Siegen, Germany}
\email{tommy.hofmann@uni-siegen.de}
\author{Aurel Page}
\address{INRIA, Univ.~Bordeaux, CNRS, IMB, UMR 5251, F-33400 Talence, France}
\email{aurel.page@inria.fr}

\date{\today}
\subjclass[2010]{Primary: 11Y16, 20C05, 11R32; Secondary 11R29, 11R04, 11Y40, 11R18, 11R27}

\maketitle

\begin{abstract}
  For a finite group $G$, we introduce a generalization of norm relations in the group algebra $\Q[G]$.
  We give necessary and sufficient criteria for the existence of such relations
  and apply them to obtain relations between the arithmetic invariants of the subfields
  of a normal extension of algebraic number fields with Galois group $G$.
  On the algorithmic side this leads to subfield based algorithms for computing rings of integers,
  $S$-unit groups and class groups.
  For the $S$-unit group computation this yields a polynomial time
  reduction to the corresponding problem in subfields.
  We compute class groups of large number fields under GRH, and new
  unconditional values of class numbers of cyclotomic fields.
\end{abstract}

\section{Introduction}

Let $K/F$ be a normal extension of number fields with Galois group $G$.
Since the beginning of algebraic number theory, the interaction and relations
between the arithmetic invariants of $K$ and its subfields has always been an
important topic.
For example it was already observed by Dirichlet~\cite{Dirichlet1942}, and later generalized by Walter~\cite{Walter1979b},
that for biquadratic fields, that is, $F = \Q$ and $G = C_2 \times C_2$, the class numbers
of $K$ and its three nontrivial subfields $K_1,K_2,K_3$ satisfy the class number formula
$h(K) = 2^i \cdot h(K_1)h(K_2)h(K_3)$,
where $i \in \Z$ depends on the index of certain unit groups.
Thus, once the class numbers of the subfields are known, one can compute the
class number of $K$ up to a power of $2$.

In the 1950s, Brauer~\cite{Brauer1951} and Kuroda~\cite{Kuroda1950} laid the foundation
for a systematic study of such class number formulae by connecting them to
character theoretic properties of $G$. More precisely, for a subgroup $H \leq
G$ we denote by $\Ind_{G/H}(\triv_H)$ the permutation character of $G$ induced by the
trivial character of $H$. For a relation of the form
$\sum_{H \leq G} a_H \Ind_{G/H}(\triv_H) = 0$ 
with $a_H \in \Z$, Brauer proved a corresponding relation between zeta functions and arithmetic invariants of the fixed fields $K^H$ (see also~\cite[Theorem 73]{Frohlich1971}).
In connection with class number formulae, the existence of such relations has also been studied from a computational point of view
by Bosma and de~Smit~\cite{Bosma2001}.

A related, more group theoretic notion, is that of a relation of norms of subgroups.
For a subgroup $H \leq G$ denote by $N_H = \sum_{h \in H} h \in \Q[G]$ the corresponding norm as an element of the rational
group algebra. Then one considers equalities of the form
\begin{equation}\label{eq:classicalrel}
  0 = \sum_{H \leq G} a_H N_H
\end{equation}
in $\Q[G]$ with $a_H \in \Z$.
On the number theoretic side this implies, and is equivalent to, $1 = \prod_{H \leq G} \Nm_{K/K^H}(x)^{a_H}$ for all $x \in K^\times$ (see~\cite{Artin1948}).
The correspondence between relations of characters and norms was already
observed by Walter~\cite{Walter1979b}, who used them to derive a simple proof of Kuroda's class number formula.
A group theoretic study of the lattice of relations between norms was done by Rehm~\cite{Rehm1975}.
Under the name~idempotent relation, various arithmetic and geometric applications
have been given by Kani--Rosen, Park and Yu in~\cite{Kani1989, Kani1994, Park1990,
Park1996, Yu2003}. A connection with Arakelov class groups was described by
Kontogeorgis in~\cite{Kontogeorgis2008}.

Although relations between permutation characters and norms have played a significant role in connecting invariants of $K$ and its subfields,
both notions have not seen a systematic use in computational algebraic number theory, for example, in the
computation of the class group. Until recently, the use of subfields in algorithmic number theory had been restricted
to ad-hoc tricks and heuristic observations.
Recent work of Bauch, Bernstein, de Valence, Lange and van
Vredendaal~\cite{BBVLV} describes
how to reduce the computation of principal ideal generators in multiquadratic fields (that is, $G = C_2^n$) to quadratic subfields, thus for the
first time improving (both in theory and practice) upon classical algorithms by exploiting subfields.
This was then generalized to the computation of $S$-units by Biasse and van Vredendaal~\cite{Biasse2019} and to multicubic fields (that is, $G = C_3^n$) by
Lesavourey, Plantard and Susilo~\cite{LPS}.

The aim of the present paper is to extend these ideas to a larger range of
computational problems and to classify those groups $G$ where these improvements
apply.
To this end, we consider relations of the form
\begin{equation}\label{eq:introrel}
  d = \sum_{i=1}^\ell a_i N_{H_i} b_i
\end{equation}
in $\Q[G]$ with $d \in \Z_{>0}$, $H_i \leq G$ nontrivial and $a_i, b_i \in \Z[G]$. We refer to those relations as \textit{norm relations}.
They generalize the classical relations (\ref{eq:classicalrel}) where the coefficient of
the trivial group is nonzero, which are exactly the relations one needs to
determine invariants of $K$ from those of its subfields.
Although our systematic treatment is new, these norm relations have been used in
an ad-hoc way for $G = C_2 \times C_2$ by Wada~\cite{Wada1966}, Bauch,
Bernstein, de Valence, Lange and van Vredendaal~\cite{BBVLV} and Biasse and
van~Vredendaal~\cite{Biasse2019}, and for $G = C_3 \times C_3$ by
Parry~\cite{Parry1977} and Lesavourey, Plantard and Susilo~\cite{LPS}.
We give a systematic study of these
relations; we link them to fixed point free unitary representations and, from a
theorem of Wolf's~\cite{Wolf1972,Wall2013}, we obtain the following
classification of groups admitting a norm relation (see~Theorem~\ref{thm:genrel}).
Interestingly, the relevant condition is exactly the same in the problem coming
from geometry and topology (existence of space-forms) as in ours, but the good
and bad cases are reversed, as the groups that do not admit a norm relation are
exactly the ones that provide an example of a space-form.

\begin{introtheorem}
  The group $G$ admits a norm relation if and only if $G$ contains a non-cyclic subgroup of order~$pq$, where~$p$ and~$q$ are
      primes, or a subgroup isomorphic to~$\SL_2(\F_p)$ where~$p=2^{2^k}+1$ is a Fermat
      prime with~$k>1$.
\end{introtheorem}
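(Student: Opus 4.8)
The plan is to reduce the existence of a norm relation to a question about representations, and then to read off the classification from Wolf's theorem.

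First, note that a norm relation asserts precisely that $1$ lies in the two-sided ideal $I = \sum_{1 \neq H \leq G} \Q[G]\,N_H\,\Q[G]$ of the group algebra, equivalently that $I = \Q[G]$. Since $\Q[G]$ is semisimple, $I$ is a product of some of its simple components, so $I = \Q[G]$ if and only if every simple component is hit, i.e.\ for every irreducible $\Q$-representation $\rho$ of $G$ there is a nontrivial $H \leq G$ with $\rho(N_H) \neq 0$. As $\tfrac1{|H|}\rho(N_H)$ is the projector onto the subspace $V_\rho^H$ of $H$-invariants, $\rho(N_H) \neq 0$ is equivalent to $V_\rho^H \neq 0$; and for nontrivial $\rho$, the existence of a nontrivial $H$ with $V_\rho^H \neq 0$ is equivalent to the existence of some $g \neq 1$ for which $\rho(g)$ has $1$ as an eigenvalue (take $H = \langle g\rangle$, and conversely any nontrivial $H$ contains such a $g$), while the trivial component is hit whenever $G \neq 1$. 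Decomposing over $\C$ (the eigenvalue condition is Galois-stable) and averaging an invariant inner product, I would therefore conclude
\[ G \text{ admits a norm relation} \iff G \neq 1 \text{ and } G \text{ has no fixed-point-free unitary representation}, \]
where \emph{fixed-point-free} means $\rho(g)$ has no eigenvalue $1$ for every $g \neq 1$, a condition that forces $\rho$ to be faithful. These are exactly the finite groups acting freely and orthogonally on a sphere, the space-form groups.

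Next I would invoke Wolf's theorem, which sorts these groups into six explicit families: the solvable families I--IV, assembled from cyclic groups together with generalized quaternion groups, $\SL_2(\F_3)$, or the binary octahedral group, and the families V--VI, which involve $\SL_2(\F_5)$. It then remains to match this list against the stated subgroup condition. One inclusion is comparatively soft: fixed-point-freeness passes to subgroups, so it suffices to check that the two kinds of forbidden subgroups are never fixed-point-free. A non-cyclic group of order $pq$ is either $C_p \times C_p$ or the non-abelian group $C_q \rtimes C_p$; in the first case every character of $C_p \times C_p$ has a nontrivial kernel, and in the second a prime-order complement acts, in any faithful irreducible representation, as a cyclic permutation of the $p$ eigenlines of the normal subgroup and hence with eigenvalue $1$; so neither group is fixed-point-free. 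For $\SL_2(\F_p)$ with $p = 2^{2^k}+1$ and $k > 1$ (so $p \in \{17, 257, 65537\}$), one observes that this group is non-solvable, that $p \equiv 2 \pmod 5$ forces $A_5 \not\leq \mathrm{PSL}_2(\F_p)$ so that $\SL_2(\F_5) \cong 2.A_5$ is not involved in it, and that $\SL_2(\F_5)$ is the only non-solvable building block appearing in Wolf's list; hence $\SL_2(\F_p)$ is not a space-form group.

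The reverse inclusion --- that a group containing neither kind of forbidden subgroup is a space-form group --- is the substantive part. Here the condition ``no non-cyclic subgroup of order $pq$'' is exactly Zassenhaus's system of $pq$-conditions, which are necessary for fixed-point-freeness, and a finite group satisfying all of them is either solvable or, up to a solvable normal subgroup of order coprime to $|\SL_2(\F_p)|$, essentially $\SL_2(\F_p)$ for some prime $p \geq 5$. In the solvable case one checks that the $pq$-conditions already confine the group to the metacyclic-or-binary-polyhedral shapes of Wolf's families I--IV, all of which are fixed-point-free. In the non-solvable case an odd prime divisor $\ell$ of $p-1$ would yield a non-cyclic subgroup $C_p \rtimes C_\ell$, so the $pq$-conditions force $p = 5$ or $p$ a Fermat prime; the Fermat case with $k > 1$ is excluded by hypothesis, and $p = 5$ lands in Wolf's families V--VI, which are again fixed-point-free. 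The main obstacle is exactly this final matching of the clean subgroup criterion with Wolf's six families, and in particular the isolation of the Fermat-prime phenomenon: $\SL_2(\F_3)$ and $\SL_2(\F_5)$ are space-form groups, yet $\SL_2(\F_{17})$, $\SL_2(\F_{257})$ and $\SL_2(\F_{65537})$ are not, even though every one of them satisfies all the $pq$-conditions.
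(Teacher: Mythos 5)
Your reduction of the existence of a norm relation to the non-existence of a fixed-point-free unitary representation is exactly the paper's route (Proposition~\ref{prop:generel}, criteria (\ref{item:tsideal})--(\ref{item:sphere}), combined with the observation that one may restrict to cyclic subgroups), and your argument for it --- two-sided ideal, simple components, $\tfrac{1}{|H|}N_H$ as the projector onto $H$-invariants, eigenvalue $1$ for some $g\neq 1$ --- is the same in substance. Where you diverge is in the second half. The paper cites Wolf's theorem in the form given in \cite[Theorem 6.1]{Wall2013}, which already states the fixed-point-free condition as a subgroup criterion (``every subgroup of order $pq$ is cyclic and no subgroup is isomorphic to $\SL_2(\F_p)$ for $p>5$ prime''), so the only remaining work is the one-line observation that for a non-Fermat prime $p$ one can pick an odd prime $q\mid p-1$ and exhibit a non-cyclic subgroup of order $pq$ inside $\SL_2(\F_p)$ generated by a diagonal element of order $q$ and a unipotent, collapsing ``$p>5$'' to ``$p$ a Fermat prime with $k>1$''. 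You instead take Wolf's theorem in its six-families form and propose to re-derive the subgroup criterion from it; this forces you through the Suzuki--Zassenhaus structure theory for groups satisfying all $pq$-conditions and a family-by-family matching, which you correctly identify as the substantive obstacle but only sketch (the solvable confinement to families I--IV, and the claim that every non-solvable space-form group contains $\SL_2(\F_5)$, are both true but left as appeals to the classification). Your direct verifications that $C_p\times C_p$, the non-abelian $C_q\rtimes C_p$, and $\SL_2(\F_p)$ for $p\in\{17,257,65537\}$ admit no fixed-point-free representation are correct and are a nice self-contained supplement; but as a proof strategy, citing Wolf's theorem in the subgroup-criterion form, as the paper does, buys you the entire ``reverse inclusion'' for free and isolates the Fermat-prime phenomenon as an elementary computation inside $\SL_2(\F_p)$ rather than as a byproduct of the classification.
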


The existence of a norm relation yields the following connection between the class
group of~$K$ and that of its subfields (see Proposition~\ref{prop:apl_normrel}).
For a subgroup $H \leq G$ and~$M$ a~$\Z[G]$-module we denote by $M^H = \{ m
\in M \ | \ hm = m \text{ for all $h \in H$\}}$ the fixed points of $H$ in $M$.
\begin{introprop}
  Let~$G$ be a finite group that admits a norm relation~(\ref{eq:introrel}), and
  let~$K/F$ be a Galois extension of number fields with Galois group~$G$.
  Then the group~$\Cl(K)\otimes \Z[1/d]$ is isomorphic to a direct summand
    of~$\bigoplus_{i=1}^\ell \Cl(K^{H_i})\otimes \Z[1/d]$,
    and the group~$\Cl(K)/\Cl(K)[d]$ is isomorphic to a subgroup
    of~$\bigoplus_{i=1}^\ell \Cl(K^{H_i})$.
\end{introprop}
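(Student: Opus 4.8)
The plan is to extract from the norm relation an explicit pair of homomorphisms between $\Cl(K)$ and $\bigoplus_{i=1}^\ell\Cl(K^{H_i})$ whose composite in one direction is multiplication by~$d$, and then deduce both assertions formally. The input is the standard pair of maps attached to a subgroup $H\le G$: the conorm $j_H\colon\Cl(K^H)\to\Cl(K)$ induced by extension of ideals $\ag\mapsto\ag\OO_K$, and the relative norm $\Nm_H=\Nm_{K/K^H}\colon\Cl(K)\to\Cl(K^H)$. Viewing $\Cl(K)$ as a module over $\Z[G]$ with $G=\Gal(K/F)$, the crucial identity is $j_H\circ\Nm_H=N_H$ as endomorphisms of $\Cl(K)$: since prime ideal classes generate, it suffices to check it on the class of a prime $\PP$ of~$K$ above $\p$ of~$K^H$, where, with $e,f$ the ramification index and residue degree, $\prod_{h\in H}h(\PP)=\bigl(\prod_{\q\mid\p}\q\bigr)^{ef}=\p^f\OO_K=\Nm_{K/K^H}(\PP)\OO_K$.

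Granting this, I would set $\phi\colon\Cl(K)\to\bigoplus_{i=1}^\ell\Cl(K^{H_i})$, $\phi(c)=(\Nm_{H_i}(b_i c))_i$, and $\psi\colon\bigoplus_{i=1}^\ell\Cl(K^{H_i})\to\Cl(K)$, $\psi((x_i)_i)=\sum_{i=1}^\ell a_i\cdot j_{H_i}(x_i)$, where $a_i,b_i\in\Z[G]$ act through the module structure. The identity above together with the norm relation~(\ref{eq:introrel}) gives $\psi\circ\phi(c)=\sum_i a_i\cdot j_{H_i}(\Nm_{H_i}(b_i c))=\sum_i a_i N_{H_i}b_i\,c=d\cdot c$, that is, $\psi\circ\phi=d\cdot\mathrm{id}_{\Cl(K)}$. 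For the first assertion, tensor everything with $\Z[1/d]$ (flat over~$\Z$): then $d\cdot\mathrm{id}$ becomes an automorphism of $\Cl(K)\otimes\Z[1/d]$, so $(d\cdot\mathrm{id})^{-1}\circ(\psi\otimes\mathrm{id})$ is a left inverse of $\phi\otimes\mathrm{id}$; hence $\phi\otimes\mathrm{id}$ is a split monomorphism and $\Cl(K)\otimes\Z[1/d]$ is isomorphic to the direct summand $(\phi\otimes\mathrm{id})\circ(d\cdot\mathrm{id})^{-1}\circ(\psi\otimes\mathrm{id})\bigl(\bigoplus_i(\Cl(K^{H_i})\otimes\Z[1/d])\bigr)$ of $\bigoplus_i\bigl(\Cl(K^{H_i})\otimes\Z[1/d]\bigr)$, this operator being idempotent.

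For the second assertion, observe that $c\in\ker\phi$ forces $d\cdot c=\psi(\phi(c))=0$, so $\ker\phi\subseteq\Cl(K)[d]$; hence $\Cl(K)/\Cl(K)[d]$ is a quotient of $\Cl(K)/\ker\phi\cong\operatorname{im}\phi$, which is a subgroup of $\bigoplus_i\Cl(K^{H_i})$. Thus $\Cl(K)/\Cl(K)[d]$ is a subquotient of the finite abelian group $\bigoplus_i\Cl(K^{H_i})$, and I would finish by invoking the elementary fact that a subquotient of a finite abelian group is isomorphic to a subgroup of it (Pantryagin duality shows that for finite abelian groups subgroups and quotients coincide up to isomorphism; a subquotient, being a quotient of a subgroup, is therefore a quotient of a quotient, i.e.\ a quotient, i.e.\ again a subgroup). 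There is no deep obstacle here: once the relation $\psi\circ\phi=d\cdot\mathrm{id}$ is in hand the argument is short and formal. The only points requiring real care are pinning down the normalization in $j_H\circ\Nm_H=N_H$ (no factor of $[K:K^H]$ intervenes, but this rests on the prime-by-prime computation above) and the last passage from "subquotient" to a genuine subgroup.
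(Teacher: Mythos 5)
Your proposal is correct and follows essentially the same route as the paper: the paper defines the same pair of maps $\Phi([\ag])=(\Nm_{K/K^{H_i}}(\ag^{b_i}))_i$ and $\Psi(([\ag_i])_i)=\prod_i[\ag_i\OO_K]^{a_i}$, verifies $\Psi\circ\Phi=d\cdot\Id$ from the norm relation, obtains the direct summand from the idempotent $d^{-1}(\Phi\circ\Psi)\otimes\Z[1/d]$, and handles the second assertion via $\ker\Phi\subseteq\Cl(K)[d]$ plus the fact that a subquotient of a finite abelian group is isomorphic to a subgroup. The only difference is cosmetic: you spell out the prime-by-prime verification of $j_H\circ\Nm_H=N_H$, which the paper leaves implicit.
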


Compared with Brauer--Kuroda type
relations~\cite{Brauer1951,Kuroda1950,Bartel2012,Bartel2013} and Mackey functor
type relations~\cite{Boltje}, ours is less precise in that it bounds the class
group or the class number without pinning it down exactly, but it is also partly
stronger in that norm relations are more frequent than Brauer relations and
because it is independent of the coefficients of the relation.

Our algorithmic use of norm relations to leverage information on~$K$ from its subfields
uses the following simple but crucial statement (see~Proposition~\ref{prop:meta}).

\begin{introprop}
  Let~$M$ be a $\Z[G]$-module, and assume we have a norm relation~(\ref{eq:introrel}). Then the quotient 
  \[ M/(a_1 M^{H_1} + \dotsb + a_\ell M^{H_\ell}) \]
  has exponent dividing $d$.
\end{introprop}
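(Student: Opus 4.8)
The plan is to use the norm relation as an identity in $\Z[G]$ and let it act on $M$. Starting from
\[
  d = \sum_{i=1}^\ell a_i N_{H_i} b_i \quad\text{in } \Z[G],
\]
I observe that for any $m \in M$ we have $d\cdot m = \sum_{i=1}^\ell a_i N_{H_i} (b_i m)$. The key point is that for each $i$, the element $N_{H_i}(b_i m) = \sum_{h \in H_i} h\,(b_i m)$ is fixed by $H_i$: indeed, left-multiplying $N_{H_i}$ by any $h' \in H_i$ just permutes the summands, so $h' N_{H_i} = N_{H_i}$, hence $N_{H_i}(b_i m) \in M^{H_i}$. Therefore $a_i N_{H_i}(b_i m) \in a_i M^{H_i}$, and summing over $i$ shows $d\cdot m \in a_1 M^{H_1} + \dotsb + a_\ell M^{H_\ell}$ for every $m \in M$.

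Consequently, in the quotient $Q := M/(a_1 M^{H_1} + \dotsb + a_\ell M^{H_\ell})$ we have $d\cdot \bar m = 0$ for all $\bar m \in Q$, which is exactly the assertion that $Q$ has exponent dividing $d$. The only mild subtlety to be careful about is that $M^{H_i}$ is a $\Z$-submodule of $M$ but need not be a $\Z[G]$-submodule, so $a_i M^{H_i}$ denotes the $\Z$-span of $\{a_i x : x \in M^{H_i}\}$ inside $M$ (equivalently the image of $M^{H_i}$ under the $\Z$-linear endomorphism $x \mapsto a_i x$); the sum $a_1 M^{H_1} + \dotsb + a_\ell M^{H_\ell}$ is then a genuine $\Z$-submodule of $M$ and the quotient $Q$ makes sense as an abelian group, which is all that is needed for the statement about its exponent.

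I do not anticipate a real obstacle here: the entire content is the observation that $N_{H_i}$ annihilates the action of $H_i$ on the left, so that pushing $b_i m$ through $N_{H_i}$ lands in the fixed submodule, and then reading off the norm relation modulo the submodule. The statement is essentially a tautological consequence of the definition of a norm relation, and the proof is two lines; the main thing to get right is the bookkeeping about which objects are $\Z[G]$-modules versus plain $\Z$-modules, as noted above.
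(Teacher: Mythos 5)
Your proof is correct and follows exactly the paper's argument: apply the norm relation to $m$, note that $N_{H_i}(b_i m)\in M^{H_i}$, and conclude that $d\cdot m$ lies in $a_1 M^{H_1}+\dotsb+a_\ell M^{H_\ell}$. Your remark on interpreting $a_i M^{H_i}$ as the image of the $\Z$-submodule $M^{H_i}$ under multiplication by $a_i\in\Z[G]$ is a harmless clarification that the paper leaves implicit.
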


In particular, if $M$ is finitely generated, we can use the modules
$M^{H_i}$ of fixed points to approximate $M$ by a finite index subgroup, whose index
divides a power of~$d$.
We show how to leverage this result in the following classical problems from
computational algebraic number theory (see~Section~\ref{sec:algo}):
\begin{enumerate}
  \item
    computation of the ring of integers $\order_K$,
  \item
    computation of $S$-unit groups $\order_{K, S}^\times$,
  \item
    computation of the class group $\Cl(K)$.
\end{enumerate}

Note that these problems, in particular (2) and~(3), are at the core of many algorithmic questions in algebraic
number theory and arithmetic geometry, as well as cryptographic applications. 
We implemented our algorithms for the general case in~\textsc{Hecke}~\cite{Hecke} and a
special algorithm for the abelian case in~\textsc{Pari/GP}~\cite{PARI2}. Using
both implementations, we computed class groups of number fields that
are out of reach of other current techniques.
For example, consider the normal closure of $x^{10} + x^8 - 4x^2 + 4$, which is a $C_2 \times A_5$ extension of $\Q$ of degree $120$ and discriminant~$\approx 10^{161}$. Using the first implementation we show that assuming the generalized Riemann hypothesis (GRH) the class number is $1$.
This computation takes only $6$ hours on a single core machine (see
Example~\ref{ex:hecke}).
Using the second implementation we determine under GRH the structure of the
class group of the cyclotomic field~$K = \Q(\zeta_{6552})$ of degree~$1728$ and
discriminant~$\approx 10^{5258}$, and in particular we obtain that~$h_{6552}^+ =
70695077806080 = 2^{24}\cdot 3^3 \cdot 5 \cdot 7^4 \cdot 13$.
This computation takes only $4$ hours on a single core machine (see
Example~\ref{ex:parigp}).

Our methods are also useful for unconditional determination of class groups. As
an example, we certify some new values of class numbers of cyclotomic fields
(Theorem~\ref{thm:unconditional-cyclo}).

\begin{introtheorem}\label{thm:unconditional-cyclo-intro}
  The class numbers and class groups in Tables~\ref{tab:certified-cyclo}
  and~\ref{tab:certified-cyclo-complete} are correct.
\end{introtheorem}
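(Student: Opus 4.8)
The plan is to certify, for each cyclotomic field $K=\Q(\zeta_m)$ appearing in Tables~\ref{tab:certified-cyclo} and~\ref{tab:certified-cyclo-complete}, the two factors in $h_m=h_m^+h_m^-$ separately, where $h_m^+=h(\Q(\zeta_m)^+)$ and $h_m^-=h_m/h_m^+$ is the relative class number, and then to reconstruct the full Galois-module structure of $\Cl(K)$. First I would dispose of the minus part: $h_m^-$ is an exact rational number, computed unconditionally as a determinant in the odd Dirichlet characters of conductor dividing $m$ (equivalently a product of generalized Bernoulli numbers), and the Galois structure of the minus part — in particular the $\F_p$-dimension of each odd eigenspace of $\Cl(K)[p]$ — can likewise be pinned down unconditionally for the fields in the tables, e.g.\ via the Stickelberger ideal or by direct computation; none of this uses GRH.

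Next I would attack the part of the plus side that is coprime to $d$ by means of a norm relation. For the $m$ in the tables the group $\Gal(\Q(\zeta_m)^+/\Q)=(\Z/m)^\times/\{\pm1\}$ contains a non-cyclic subgroup of order $pq$ (and when it does not, $h_m^+$ is small and already known), so by Theorem~\ref{thm:genrel} it admits a norm relation $d=\sum_i a_iN_{H_i}b_i$; Proposition~\ref{prop:apl_normrel} applied to $\Q(\zeta_m)^+/\Q$ then makes $\Cl(\Q(\zeta_m)^+)\otimes\Z[1/d]$ a direct summand of $\bigoplus_i\Cl((\Q(\zeta_m)^+)^{H_i})\otimes\Z[1/d]$ and embeds $\Cl(\Q(\zeta_m)^+)/\Cl(\Q(\zeta_m)^+)[d]$ into the same direct sum. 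The fixed fields $(\Q(\zeta_m)^+)^{H_i}$ are proper subfields of $\Q(\zeta_m)^+$ whose class groups are either tabulated in the literature for small real (cyclotomic) fields or computed and certified directly by a Buchmann-type algorithm using rigorous, GRH-free estimates for the $L$-values entering the analytic class number formula; this determines $\Cl(\Q(\zeta_m)^+)\otimes\Z[1/d]$ exactly, hence the prime-to-$d$ part of $h_m^+$ and of the structure of $\Cl(\Q(\zeta_m)^+)$.

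For the remaining primes $p\mid d$ I would first apply Leopoldt's Spiegelungssatz to bound the $p$-rank of $\Cl(\Q(\zeta_m)^+)$ by the $p$-rank of a specific eigenspace of the minus part, which is already under control; when this forces the rank to vanish the $p$-part is trivial, and otherwise I would run Schoof's cyclotomic-unit method — computing images of the relevant cyclotomic units modulo auxiliary primes $\ell\equiv1\pmod p$ and checking the criterion that certifies $p\nmid h_m^+$ — or, failing that, searching directly for the units that complete the group of cyclotomic units to $\order_{\Q(\zeta_m)^+}^\times$, whose index is $h_m^+$. This is the step I expect to be the main obstacle: the norm relation contributes nothing on the $p$-part when $p\mid d$, and the analytic class number formula is of no help for $h_m^+$ because the cyclotomic-unit regulator absorbs exactly that factor, so the whole argument hinges on Schoof's criterion (or the explicit unit search) being decisive for every pair $(m,p)$ that actually occurs, which has to be checked case by case.

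Finally I would reassemble the data: multiplying the certified values $h_m^-$ and $h_m^+$ gives $h_m$, and combining the minus-part module structure, the plus-part structure obtained above, the exact sequence relating $\Cl(K)$, $\Cl(\Q(\zeta_m)^+)$ and the minus part (which is a genuine direct sum away from $2$), and the reflection relations between the various $p$-ranks determines $\Cl(K)$ uniquely in the ranges of the tables. A comparison with the GRH-conditional output of our algorithms then confirms each entry, and since every ingredient above is unconditional, the tables are correct.
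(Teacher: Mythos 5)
Your outline diverges from the paper precisely at the step you yourself flag as the main obstacle, and that step is a genuine gap. For the primes $p$ dividing the denominator $d$ of the norm relation (in practice $p=2$ or $3$ for these conductors), you propose Leopoldt's Spiegelungssatz plus Schoof's cyclotomic-unit criterion, falling back on an explicit search for the units completing the cyclotomic units. But reflection only gives one-sided rank inequalities (and is weakest at $p=2$), Schoof's criterion certifies $p\nmid h_m^+$ rather than determining the $p$-part when it is nontrivial, and the explicit unit search is exactly the computation whose infeasibility motivates the whole paper. Several table entries defeat this plan outright: e.g.\ $h^+_{2520}=208=2^4\cdot 13$ and $h^+_{936}=16$ have nontrivial $2$-part in the plus class group while the relevant relations have $2$-power denominators, so "Schoof's criterion being decisive for every pair $(m,p)$" simply fails there. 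Your treatment of the minus part is also optimistic: the Bernoulli-number determinant gives $h_m^-$ unconditionally, but recovering the full group structure (not just $p$-ranks of eigenspaces of $\Cl(K)[p]$) of the minus part, and then reassembling $\Cl(K)$ from plus and minus data at $p=2$, is not a routine Stickelberger computation.

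The paper closes this gap by an entirely different mechanism, Proposition~\ref{prop:certif}. One runs Algorithm~\ref{alg:ab2} with GRH-conditional subfield data, then certifies only \emph{partial} information about the subfields: their class groups and roots of unity exactly, and their unit and $S$-unit groups merely up to finite index coprime to $p$ with an explicit index bound $B_i$. The Brauer relation (via Proposition~\ref{prop:brauernormrel} and Proposition~\ref{prop:brauerzeta}) expresses $h_K\Reg_K$ in terms of subfield quantities, and the key observation is that every possible discrepancy between the computed and true values of $h_{p'}R_0h_p/u$ is either a rational number with denominator bounded by $d^{(|S|+r_0)c_0}\prod_i B_i^{|c_i|}$ or an integral power of $p$; hence a single high-precision numerical verification of the analytic class number formula forces all these factors to equal $1$ and certifies the $p$-part for $p\mid d$ --- no $d$-th roots, no unit search, and no cyclotomic-unit index argument. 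This is the idea missing from your proposal, and without it (or a substitute of comparable strength) the entries with $p\mid\gcd(d,h_m^+)$ cannot be certified by the route you describe.
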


In order to keep the table small, we did not include fields for
which the class number was already known unconditionally.
We also determined the class group structure of many examples for
which the class number was known~\cite{Miller}, but it
is likely that the class group structure could be determined by other methods,
for instance~\cite{AokiFukuda} or by constructing explicit class fields, so we
did not include them.
According to Miller~\cite{Miller}, the largest conductor for which the class
number of a cyclotomic field has been computed unconditionally was~$420$ prior
to our work; we raise this record to~$2520$.
Note that our methods are not restricted to cyclotomic fields, but these number
fields provide a family of examples to which they often apply and that are of
general interest.
Our proof of Theorem~\ref{thm:unconditional-cyclo-intro} does
not use special properties of cyclotomic fields other than their Galois group;
it would be interesting to combine them with special cyclotomic techniques.

\begin{table}
  \caption{Class numbers of cyclotomic fields $\Q(\zeta_n)$}\label{tab:certified-cyclo}
  $n$ conductor, $\varphi(n)$ degree, $h^+$ plus part of class number, $r_2$
  $2$-rank of class group, $r_3$ $3$-rank of class group, $T_1$
  time for the conditional computation, $T_2$ time to unconditionally certify
  the computation.
  \hspace*{-1.1cm}\begin{tabular}[t]{ccccccc c ccccccc}
    \hline
    $n$ & $\varphi(n)$ & $h^+$ & $r_2$ & $r_3$ & $T_1$ & $T_2$
      & \phantom{a} &
      $n$ & $\varphi(n)$ & $h^+$ & $r_2$ & $r_3$ & $T_1$ & $T_2$ \\
    \hline
    $255$ & $128$ & $1$ & $1$ & $1$ & $1$~min & $3$~h &&
      $624$ & $192$ & $1$ & $3$ & $4$ & $2.5$~min & $28$~min \\
    $272$ & $128$ & $2$ & $4$ & $2$ & $1$~min & $8$~h &&
      $720$ & $192$ & $1$ & $3$ & $4$ & $2.5$~min & $24$~min \\
    $320$ & $128$ & $1$ & $0$ & $2$ & $25$~s & $13$~h &&
      $780$ & $192$ & $1$ & $18$ & $1$ & $6.5$~min & $6.5$~min \\
    $340$ & $128$ & $1$ & $3$ & $0$ & $1$~min & $8$~h &&
      $840$ & $192$ & $1$ & $6$ & $4$ & $6$~min & $2$~min \\
    $408$ & $128$ & $2$ & $5$ & $2$ & $3$~min & $21$~min &&
      $455$ & $288$ & $1$ & $14$ & $3$ & $4$~min & $9$~h \\
    $480$ & $128$ & $1$ & $3$ & $4$ & $43$~s & $4$~s &&
      $585$ & $288$ & $1$ & $7$ & $4$ & $4$~min & $10.5$~h \\
    $273$ & $144$ & $1$ & $9$ & $2$ & $34$~s & $5.5$~min &&
      $728$ & $288$ & $20$ & $17$ & $14$ & $3$~min & $2$~h \\
    $315$ & $144$ & $1$ & $4$ & $2$ & $20$~s & $4.5$~min &&
      $936$ & $288$ & $16$ & $11$ & $11$ & $2.5$~min & $2.5$~h \\
    $364$ & $144$ & $1$ & $6$ & $5$ & $25$~s & $11$~min &&
      $1008$ & $288$ & $16$ & $13$ & $10$ & $2.5$~min & $5.5$~h \\
    $456$ & $144$ & $1$ & $1$ & $3$ & $1.5$~min & $8$~h &&
      $1092$ & $288$ & $1$ & $24$ & $7$ & $3$~min & $1$~h \\
    $468$ & $144$ & $1$ & $3$ & $6$ & $25$~s & $12$~min &&
      $1260$ & $288$ & $1$ & $14$ & $7$ & $2.5$~min & $2$~h \\
    $504$ & $144$ & $4$ & $9$ & $6$ & $16$~s & $2$~s &&
      $1560$ & $384$ & $8$ & $40$ & $5$ & $2$~h & $3.5$~h \\
    $520$ & $192$ & $4$ & $18$ & $3$ & $6.5$~min & $16$~min &&
      $1680$ & $384$ & $1$ & $12$ & $8$ & $1$~h & $8$~h \\
    $560$ & $192$ & $1$ & $3$ & $5$ & $2.5$~min & $18$~min &&
      $2520$ & $576$ & $208$ & $38$ & $15$ & $40$~min & $43$~h \\
    \hline
  \end{tabular}
\end{table}

On the theoretical side, assuming GRH we exhibit a polynomial time reduction to proper
subfields in the presence of a norm relation
(Theorem~\ref{thm:redtosubfields}).

\begin{introtheorem}
  Assume GRH holds. Let $G$ be a finite group and $\Hsg$ a set of subgroups
  of~$G$.
  Assume that there exists a norm relation as in~(\ref{eq:introrel}) where
  all~$H_i$ belong to~$\Hsg$.
  There exists a deterministic polynomial time algorithm that, on input of
  \begin{itemize}
    \item a number field $K$,
    \item an injection $G \to \Aut(K)$,
    \item a finite $G$-stable set $S$ of prime ideals of $K$,
    \item for each $H$ in $\Hsg$, a basis of the group of $S$-units of the subfield
          fixed by $H$,
  \end{itemize}
  returns a $\Z$-basis of the group of  $S$-units of $K$.
\end{introtheorem}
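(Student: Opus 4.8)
The plan is to apply the meta-proposition (Proposition~\ref{prop:meta}) to the $\Z[G]$-module $M = \order_{K,S}^\times$ and then to recover $M$ from the finite-index subgroup it produces by a saturation computation; the polynomial cost of that last step is exactly where GRH is used. First I would note that the $\Z$-rank $r = r_1(K) + r_2(K) - 1 + |S|$ of $M$ is polynomial in the input size, and that for a subgroup $H$ occurring in the relation the fixed module $M^H$ equals $(K^H)^\times \cap \order_{K,S}^\times$, which — because $S$ is $G$-stable, so the primes of $K$ above a given prime of $K^H$ all lie in $S$ or all lie outside $S$ — is exactly the group of $S_H$-units of $K^H$, where $S_H$ is the set of primes of $K^H$ below $S$. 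The fields $K^{H_i}$ and the sets $S_{H_i}$ are computed from $K$ and the injection $G \to \Aut(K)$ in polynomial time, and the input provides a basis of each $M^{H_i}$. Applying $a_i \in \Z[G]$ to such a basis — i.e.\ forming the prescribed products of Galois conjugates, which keeps every element in a compact representation of polynomial size — and adjoining a generator of the (finite, easily computed) group $\mu_K$ of roots of unity produces in polynomial time a finite generating set of the subgroup $N := \bigl(\sum_{i=1}^\ell a_i M^{H_i}\bigr) + \mu_K \le M$. Since $N \supseteq \sum_i a_i M^{H_i}$, Proposition~\ref{prop:meta} shows $M/N$ has exponent dividing $d$; being generated by at most $r$ elements, it has order dividing $d^r$.

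Next I would reduce everything to saturation. I claim $M$ is exactly the $d$-saturation $\{x \in K^\times : x^d \in N\}$ of $N$ in $K^\times$: the inclusion $\subseteq$ is the exponent bound just obtained, and conversely if $x^d \in N \subseteq M$ then $x^d$ is an $S$-unit, so $v_\p(x) = 0$ for all $\p \notin S$ and hence $x \in M$. Since $d$ is a fixed constant (the group $G$ and the norm relation being fixed, not part of the input), it suffices to compute, for each prime $\ell \mid d$, the $\ell$-saturation of a subgroup of $K^\times$ given by generators, and to iterate this operation a bounded number of times, keeping each intermediate group generated by $S$-units in compact representation.

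For the $\ell$-saturation of a group generated by $S$-units $v_1, \dots, v_m$, I would first use linear algebra on the $S$-valuations together with a logarithmic embedding to replace the $v_j$ by a multiplicatively independent basis, so that the saturation corresponds to the subspace $W \le \F_\ell^m$ of exponent vectors $g = (g_j)$ with $w := \prod_j v_j^{g_j} \in (K^\times)^\ell$. To determine $W$, reduce the $v_j$ modulo auxiliary prime ideals $\p \nmid \ell$ of $K$ with $\Nm\p \equiv 1 \pmod\ell$: each such $\p$ yields a linear functional $\F_\ell^m \to (\order_K/\p)^\times/\ell \cong \F_\ell$, and $W$ is the intersection of their kernels over all $\p$. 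Here GRH enters: if $w \notin (K^\times)^\ell$ then $K(\zeta_\ell, w^{1/\ell})/K(\zeta_\ell)$ has degree $\ell$, and this field has degree and absolute discriminant bounded polynomially in the input — uniformly in $g$, since $|g_j| < \ell$ and the $v_j$ have polynomial size — so effective Chebotarev under GRH produces a prime $\p$ of $K$ of polynomially bounded norm that splits completely in $K(\zeta_\ell)/K$ and is inert in $K(\zeta_\ell, w^{1/\ell})/K(\zeta_\ell)$, i.e.\ witnesses $w \notin (K^\times)^\ell$. Hence ranging over all $\p$ up to one polynomial bound $B$, of which there are polynomially many, cuts out exactly $W$, and the linear algebra over $\F_\ell$ is polynomial time. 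For each vector in a basis of $W$ I would recover the corresponding $\ell$-th root in $K$ by factoring $X^\ell - \prod_j v_j^{g_j}$ over $K$ (or by reconstruction from the reductions modulo the auxiliary primes) in deterministic polynomial time; adjoining these roots, iterating over the primes $\ell \mid d$ and the bounded number of saturation rounds, and extracting from the resulting generators of $M$ a basis (a generator of $\mu_K$ together with $r$ fundamental $S$-units) by a Hermite normal form computation yields the output.

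The hard part is this saturation step. The delicate point is obtaining a genuinely polynomial bound on the auxiliary primes: one must apply effective Chebotarev to the individual degree-$\ell$ Kummer extensions $K(\zeta_\ell, w^{1/\ell})$, uniformly over the exponentially many vectors $g$, rather than to the single field generated by all $\ell$-th roots at once, whose degree is exponential. The accompanying issue is to carry every $S$-unit — the images of the subfield bases, the elements $\prod_j v_j^{g_j}$, and the extracted $\ell$-th roots — in a compact representation of polynomial bit-size throughout, so that the algorithm remains deterministic polynomial time.
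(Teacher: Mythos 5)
Your proposal is correct and takes essentially the same route as the paper: Proposition~\ref{prop:meta} / Corollary~\ref{cor:relSunits} reduce the problem to the $d$-saturation of the group generated by the (Galois translates of the) subfield $S$-units, and the ``hard part'' you single out --- a polynomial bound on the auxiliary primes that is uniform over the exponentially many Kummer extensions $K(\zeta_\ell, w^{1/\ell})$, obtained from effective Chebotarev under GRH --- is precisely the paper's effective Grunwald--Wang theorem (Theorem~\ref{thm:gruneff}, proved via Bach's bound applied to the ray class character of~$L(\beta)/L$ with $L=K(\zeta_d)$, with the uniformity coming from the ramification being confined to $S$ and the primes above $\ell$). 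The only implementation-level differences are that the paper controls element sizes across the iterated saturation by lattice reduction in a logarithmic embedding (Lemma~\ref{lem:heightlogemb}) rather than by compact representations (which its remark endorses as an alternative), extracts $p^v$-th roots in a single pass for odd $p$ while iterating square roots only for $p=2$, and uses Theorem~\ref{thm:denrelG} to confine the denominator to primes dividing $|G|$.
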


The proof uses an effective version of the Grunwald--Wang theorem
under GRH, which is different from other versions found in the literature (for
instance~\cite{Wang2015}) and
may be of independent interest (Theorem~\ref{thm:gruneff}), and a bound on the
smallest possible value of~$d$ in norm relations (Theorem~\ref{thm:denrelG}).
We also provide an easily checked criterion for the existence of a norm relation
(Proposition~\ref{prop:generel}), and a complete classification of optimal norm
relations in the abelian case (Theorem~\ref{thm:optimalabelianrels}).

In view of the previous theorems, one might ask to which extent norm
relations exhaust all possibilities to exploit subfields to compute $S$-units.
We partially answer this by proving the following converse (see
Proposition~\ref{prop:conv}).

\begin{introprop}
  Let~$K/F$ be a finite normal extension of number fields with Galois group~$G$, and let~$\Hsg$
  be a set of nontrivial subgroups of~$G$.
  Let~$S$ be a finite $G$-stable set of prime ideals of~$K$.
  Assume that at least one of the following holds:
  \begin{itemize}
    \item $F$ is not totally real,
    \item there is a real place of~$F$ that splits completely in~$K$, or
    \item there is a prime ideal~$\pg$ of~$F$ that splits completely in~$K$ and
      such that the primes above~$\pg$ are in~$S$.
  \end{itemize}
  If the $\Z[G]$-submodule of~$\OO_{K,S}^\times$ generated by the~$\OO_{K^H,S}^\times$
  for~$H\in\Hsg$ has finite index, then~$G$ admits a norm relation with respect
  to~$\Hsg$.
\end{introprop}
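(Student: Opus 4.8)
The plan is to translate the statement into representation theory and then feed in the $G$-equivariant Dirichlet $S$-unit theorem. Recall (Proposition~\ref{prop:generel}; it also follows directly by considering the Wedderburn components of $\Q[G]$ together with the idempotents $\frac{1}{|H|}N_H$, which act on any $\Q[G]$-module as the projection onto the subspace of $H$-invariants) that $G$ admits a norm relation with respect to $\Hsg$ if and only if every irreducible $\Q[G]$-module $W$ satisfies $W^H\neq 0$ for some $H\in\Hsg$. Since $\triv^H=\triv\neq 0$ for every $H$, only the \emph{nontrivial} irreducibles can fail this condition. I would therefore argue by contraposition: assuming $G$ admits no norm relation with respect to $\Hsg$, fix a nontrivial irreducible $\Q[G]$-module $W$ with $W^H=0$ for all $H\in\Hsg$, and show that the submodule $N:=\sum_{H\in\Hsg}\Z[G]\cdot(\OO_{K,S}^\times)^H$ (which is exactly the submodule generated by the $\OO_{K^H,S}^\times$) has infinite index in $\OO_{K,S}^\times$.

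The key step is to identify the $\Q[G]$-module $M:=\OO_{K,S}^\times\otimes_\Z\Q$. Let $X$ be the finite $G$-set consisting of the archimedean places of $K$ together with the primes in $S$. The $G$-equivariant Dirichlet $S$-unit theorem (via the log--valuation embedding, whose image spans the hyperplane $\sum_v x_v=0$ in $\Q[X]$) gives an isomorphism of $\Q[G]$-modules $M\oplus\triv\cong\Q[X]$. Writing $X$ as a union of $G$-orbits, $\Q[X]\cong\bigoplus_{v}\Q[G/G_w]$, where $v$ runs over the archimedean places of $F$ and the primes of $F$ lying below $S$, $w\mid v$ is a place of $K$, and $G_w$ is its decomposition group. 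Each of the three hypotheses forces one of these decomposition groups to be trivial, hence contributes a copy of the regular representation $\Q[G]=\Q[G/1]$ as a direct summand of $\Q[X]$: if $F$ is not totally real, any complex place $v$ of $F$ has $G_w=1$ since then $K_w=F_v=\C$; a real place of $F$ that splits completely in $K$ has $G_w=1$ by definition; and a prime $\pg$ of $F$ that splits completely in $K$ with all primes above it in $S$ contributes the orbit $G/1$. Since $\Q[G]$ contains every irreducible $\Q[G]$-module, and since the extra summand $\triv$ only affects the multiplicity of $\triv$, it follows that every nontrivial irreducible $\Q[G]$-module occurs in $M$; in particular $W$ occurs in $M$.

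It then remains a routine isotypic-components argument. Decompose $M=\bigoplus_{W'}M_{(W')}$ into isotypic components indexed by isomorphism classes of irreducible $\Q[G]$-modules. Each projection $M\to M_{(W')}$ is $G$-equivariant, hence $H$-equivariant, so $M^H=\bigoplus_{W'}(M_{(W')})^H$ for every subgroup $H$, and $(M_{(W)})^H=0$ because $W^H=0$. Consequently, for every $H\in\Hsg$ the submodule $\Q[G]\cdot M^H$ lies in $\bigoplus_{W'\neq W}M_{(W')}$, and therefore $N\otimes_\Z\Q=\sum_{H\in\Hsg}\Q[G]\cdot M^H\subseteq\bigoplus_{W'\neq W}M_{(W')}\subsetneq M$, the last inclusion being strict because $M_{(W)}\neq 0$ by the previous step. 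Hence $N\otimes_\Z\Q\neq M$, so $N$ has infinite index in $\OO_{K,S}^\times$; this proves the contrapositive, hence the proposition.

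I expect the main obstacle to lie in the second paragraph: pinning down the $\Q[G]$-module structure of $\OO_{K,S}^\times$ through the equivariant Dirichlet theorem, in particular handling the archimedean places correctly (distinguishing a real place of $F$ that stays split in $K$ from one that becomes complex, where the decomposition group has order $2$), and recognizing that it is precisely the presence of a copy of the regular representation — guaranteed by each of the three listed hypotheses, and by no essentially weaker condition in general — that makes \emph{every} nontrivial irreducible appear in $M$. The reduction to norm projectors and the final isotypic bookkeeping are then straightforward.
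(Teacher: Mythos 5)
Your proof is correct and is essentially the paper's argument: the paper likewise uses the equivariant Dirichlet $S$-unit theorem (each hypothesis supplying a trivial decomposition group, so $\OO_{K,S}^\times\otimes\Q$ contains the regular module modulo the trivial module), the translation of the finite-index hypothesis into generation of this $\Q[G]$-module by the fixed-point subspaces, and the criterion of Proposition~\ref{prop:generel}~(\ref{item:repsQ}) via the fact that a simple module $V$ is generated by $V^H$ iff $V^H\neq 0$. You merely phrase it contrapositively and spell out the isotypic bookkeeping that the paper leaves implicit.
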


Note that when using a set~$S$ that is guaranteed to generate the class group
of~$K$ by analytic bounds, the third condition of the proposition is usually
satisfied.

The paper is structured as follows. In Section~\ref{sec:normrels} we recall the
definitions of the classical Brauer relations and relations between norms and
introduce our notion of norm relations.
We then go on to prove the necessary and sufficient conditions for
the existence of these relations. We also investigate arithmetic properties
of such relations, which play an important role in the number theoretic
applications. We describe these applications
in Section~\ref{sec:applications}, where we also explain the
consequences of the existence of a norm relation for the invariants of number
fields. We then exploit these properties from an algorithmic point of view
in Section~\ref{sec:algo}.
Finally, in Section~\ref{sec:examples} we give various examples of
computations of class groups of abelian and non-abelian number fields.

\subsection*{Acknowledgment}
J.-F.~Biasse was supported by National Science Foundation grant 183980,
National Science Foundation grant 1846166, National Institute of Standards and Technology grant 60NANB17D184, CyberFlorida Collaborative Seed Grant Program and
CyberFlorida Capacity Building Program.
C.~Fieker was supported by Deutsche Forschungsgemeinschaft —
Project-ID 286237555 – TRR 195, Project-ID 460135501 - NFDI 29/1 ``MaRDI -- Mathematische
Forschungsdateninitiative'' and the state of Rheinland-Pfalz via the
Forschungsinitiative and the ``SymbTools'' project. T.~Hofmann was supported by Deutsche Forschungsgemeinschaft —
Project-ID 286237555 – TRR 195; and Project-ID 539387714.
A.~Page was supported by the ANR grant Ciao ANR-19-CE48-0008.
He would like to thank K.~Belabas and B.~Allombert for improving some
\textsc{Pari/GP} functionalities that were useful for this project and for sharing
insight on classical computational problems in number fields, A.~Bartel for
rich discussions about representation theory, and P. Kirchner for a suggestion
to improve Theorem~\ref{thm:redtosubfields}. He also owes some inspiration
for this project to a talk given by T.~Fukuda about computations of class
groups of abelian fields.
The authors also wish to thank Gunter Malle for numerous helpful comments, and
an anonymous referee for his thorough and thoughtful report that lead to
improvements of the paper.

\subsection*{Notations}
\hfill
We will use~$x\mapsto \overline{x}$ to denote various canonical projection maps
that should be clear from the context.

Let~$A$ be a ring, and let~$X\subseteq A$ be a subset. We write~$\langle
X\rangle_A = \sum_{x\in X}AxA$ for the two-sided ideal of~$A$ generated by~$X$.

We will denote by~$1$ the trivial group.
Let~$G$ be a finite group and~$R$ a commutative ring.
Let~$M$ be an~$R[G]$-module. We write~$M^G = \{m\in M \mid gm=m \text{ for all
}g\in G\}$ for the $R$-submodule of fixed points under~$G$.
In case $M$ is a $\Z$-module whose operation is expressed using multiplicative notation, e.g., the multiplicative group of a field or the multiplicative group of fractional ideals,
we write the action of $\Z[G]$ on $M$ as powers, by the formula
\[
  x^a = \prod_{g\in G} g(x)^{a_g} \text{ for all }x \in M, \ a = \sum_{g\in
  G}a_g g\in \Z[G].
\]
Note that this is a left action, i.e. it satisfies~$x^{ab} = (x^b)^a$ for
all~$a,b\in \Z[G]$.

Let~$H$ be a subgroup of~$G$, which we write~$H\le G$.
We denote by $N_H = \sum_{h \in H} h \in \ZZ[G]$ the norm element of $H$.
Let~$M$ be an $R[H]$-module. We use the notation~$\Ind_{G/H}(M)$ for the
induction~$R[G]\otimes_{R[H]}M$ of~$M$ to~$G$.
Let~$\chi$ be the character of a~$\C[H]$-module~$M$; we write~$\Ind_{G/H}(\chi)$
for the character of~$\Ind_{G/H}(M)$, and we write~$\Res_{G/H}(\chi)$ for the
restriction of~$\chi$ to~$H$.
Let~$F_1,F_2$ be $\C$-valued class functions on~$G$. We write their inner
product $\frac{1}{|G|}\sum_{g\in
G}F_1(g)\overline{F_2(g)}$ as~$\langle F_1,F_2\rangle_G$.
We denote by $\triv_G \colon G \to \C^\times$ the trivial character, which
satisfies $\triv_G(g) = 1$ for all $g \in G$.
We denote by~$\varphi$ the Euler totient function.

Let~$A$ be a finite abelian group, written additively here.
For a prime number~$p$, if~$A$ has cardinality~$mp^k$ with~$k\ge 0$ and~$m$ not
divisible by~$p$, let~$A_p = A/p^kA$ be the $p$-part of~$A$, and~$A_{p'} = A/mA$
be the coprime-to-$p$ part of~$A$. We have~$A \cong A_p\times A_{p'}$.
For an integer~$d$, denote by~$A[d]$ the $d$-torsion subgroup~$\{a\in A \mid da=0\}$.

\section{Brauer and norm relations of finite groups}\label{sec:normrels}

\subsection{Brauer relations and norm relations}

Let $G$ be a finite group and let~$H\le G$ be a subgroup. We recall a few basic properties
of the norm element~$N_H = \sum_{h\in H}h$:
\begin{itemize}
  \item For all~$h\in H$ we have~$hN_H = N_Hh = N_H$.
  \item For all~$g\in G$ we have~$gN_Hg^{-1} = N_{gHg^{-1}}$.
  \item For every~$\Z[G]$-module~$M$ and~$x\in M$, we have~$N_Hx\in M^H$.
  \item We have~$N_H^2 = |H| \cdot N_H$.
  \item If~$R$ is a commutative ring where~$|H|$ is invertible,
    then~$e = \frac{1}{|H|}N_H\in R[G]$ is an idempotent and for
    every~$R[G]$-module~$M$ we have~$eM = N_HM = M^H$.
\end{itemize}

\begin{definition}
  Let $G$ be a finite group and $\Hsg$ a set of subgroups of $G$.
  \begin{enumerate}
    \item
      A~\emph{Brauer relation} $\rela$ of $G$ with respect to $\Hsg$ is an equality of the form
      \[
        0 = \sum_{H \in \Hsg} a_H \Ind_{G/H}(\triv_H)
      \]
      with~$a_H\in\QQ$, where the equality is as class functions on~$G$.
      We call $\rela$ \emph{useful} if $1 \in \Hsg$ and $a_{1}\neq 0$.
      If $\Hsg$ is the set of all subgroups of $G$, we simply call $\rela$ a Brauer relation.
    \item
      Let~$R$ be a commutative ring. A \emph{norm relation over~$R$ with respect to~$\Hsg$} (or
      simply \emph{norm relation} if~$R=\QQ$) is an equality of the form
      \[
        1 = \sum_{i=1}^\ell a_i N_{H_i} b_i
      \]
      with~$a_i,b_i\in R[G]$ and $H_i \in \Hsg$, $H_i \neq 1$, where the equality holds in~$R[G]$.
    \item
      A \emph{scalar norm relation} $\rela$ of $G$ over $R$~with respect to $\Hsg$ is an equality of the form
      \[
        0 = \sum_{H \in \Hsg} a_H N_H
      \]
      with~$a_H\in R$ and $a_1 \neq 0$, where the equality holds in the group algebra~$R[G]$.
      If $\Hsg$ is the set of all subgroups of $G$, we simply call $\rela$
      a scalar norm relation.
\end{enumerate}
  We always omit~$\Hsg$ from the terminology when~$\Hsg$ is the
  set of all nontrivial subgroups of~$G$.
\end{definition}

\begin{remark}\label{rem:defrels}
  \hfill
  \begin{enumerate}
    \item
      In the literature, the term~\textit{norm relation} is also used
      for relations of the form
      \[ 0 = \sum_{H \leq G} a_H N_H \]
      with $a_H \in \Q$.
      In this regard, we consider and generalize classical norm relations
      with $a_1 \neq 0$. In particular, our norm relations are by definition
      always nonzero.
    \item Brauer relations of finite groups have been completely classified by
      Bartel and Dokchitser (\cite{Bartel2015, Bartel2014}).
    \item\label{item:replace} Let~$\tilde H\le H$ be a subgroup. We have~$N_H = \sum_{h\in H/\tilde H}hN_{\tilde H}$, so
      in a norm relation where~$H$ appears we may always replace it
      by~$\tilde H$ at the cost of increasing the number of terms.
  \end{enumerate}
\end{remark}

\begin{example}
  Let~$p$ be a prime, and let~$G = C_p\times C_p$. Then we have the scalar norm
  relation
  \[
    p = \left(\sum_{C\le G,\, |C|=p}N_C\right) - N_G.
  \]
  Indeed, every nontrivial element of~$G$ has order~$p$, there are~$p+1$
  subgroups of order~$p$, and every nontrivial element is contained in exactly
  one of them.
\end{example}

\begin{example}\label{ex:rel-pq}
  Let~$p$ be a prime, and let~$q$ be a prime dividing~$p-1$.
  Let~$G = C_p \rtimes C_q$ be a nontrivial semidirect product.
  Then we have the scalar norm relation
  \[
    p = N_{C_p} + \left(\sum_{C\le G,\, |C|=q}N_C\right) - N_G.
  \]
  Indeed, every nontrivial element of~$G$ has order~$p$ or~$q$, there is a
  unique subgroup of order~$p$, there are~$p$ subgroups of order~$q$, and every
  nontrivial element is contained in exactly one of them.
\end{example}

\begin{example}
  Let~$G = C_2\times C_2 = \langle \sigma, \tau\rangle$.
  Then we have the norm relation
  \[
    2 = N_{\langle\sigma\rangle} + N_{\langle\tau\rangle} - \sigma
    N_{\langle\sigma\tau\rangle}.
  \]
  This is the relation used by Wada \cite{Wada1966}, Bauch, Bernstein, de~Valence, Lange and van~Vredendaal \cite{BBVLV} as well as by 
  Biasse and van~Vredendaal \cite{Biasse2019}.
\end{example}

\begin{example}
  Let~$G = C_3\times C_3 = \langle u, v\rangle$.
  Then we have the norm relation
  \[
    3 = N_{\langle u\rangle} + N_{\langle v\rangle} + N_{\langle uv\rangle}
    - (u+uv)N_{\langle u^2v\rangle}.
  \]
  This is the relation used by Parry \cite{Parry1977} and by Lesavourey, Plantard and Susilo \cite{LPS}.
\end{example}

\subsection{Existence of relations}

We now discuss the existence of the various relations. We begin by showing that
Brauer and scalar norm relations are in essence the same.

\begin{proposition}\label{prop:brauernormrel}
  Let $G$ be a finite group and $\Hsg$ a set of subgroups of $G$.
  \begin{enumerate}
    \item
      If 
      \[ 0 = \sum_{H \in \Hsg} a_H N_H \]
      is a scalar norm relation with respect to $\Hsg$, then
      \[ 0 = \sum_{H \in \Hsg} a_H \lvert H \rvert \Ind_{G/H}(\triv_H) \]
      is a useful Brauer relation with respect to $\Hsg$.
    \item
      If
      \[ 0= \sum_{H \in \Hsg} a_H \Ind_{G/H}(\triv_H) \]
      is a useful Brauer relation with respect to $\Hsg$ then
      \[
        0 = \sum_{H\in\Hsg} \frac{a_H}{|H|} \sum_{g\in G}N_{gHg^{-1}}
      \]
      is a scalar norm relation;
      if in addition $\Hsg$ is invariant under conjugation, then
      \[
        0 = \sum_{H \in \Hsg} \left(\frac{1}{\lvert H \rvert} \sum_{g \in
        G}a_{gHg^{-1}}\right) N_{H}
        \]
    is a scalar norm relation with respect to $\Hsg$.
\end{enumerate}

\end{proposition}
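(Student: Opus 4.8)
The plan is to translate between the two kinds of relations by comparing their effect on the regular representation, or equivalently by understanding the map $\Z[G/H] \to \Z[G]$ sending the trivial coset to $N_H$. For part (1), I would start from the observation that for any subgroup $H$, the permutation module $\Ind_{G/H}(\triv_H) = \Q[G/H]$ is isomorphic as a $\Q[G]$-module to $\Q[G]N_H = \Q[G]e_H$ where $e_H = \frac{1}{|H|}N_H$ is the idempotent. Hence the character of $\Q[G]N_H$ equals $\Ind_{G/H}(\triv_H)$. The character of the left $\Q[G]$-module $\Q[G]N_H$ can be computed directly: since $g \mapsto$ (left multiplication by $g$ on $\Q[G]N_H$) has trace equal to $|H|$ times the coefficient-counting of fixed basis elements, one gets $\operatorname{char}(\Q[G]N_H)(g) = \#\{x \in G/H : gxH = xH\}$, which is exactly $\Ind_{G/H}(\triv_H)(g)$. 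Now if $0 = \sum_H a_H N_H$ in $\Q[G]$, then right-multiplying shows $\sum_H a_H \Q[G]N_H$-characters... more cleanly: the map $\Q[G] \to \bigoplus_H \Q[G]N_H$ has a kernel controlled by the relation; the cleanest route is to note $N_H \cdot N_H = |H| N_H$, so $0 = \sum_H a_H N_H$ implies, after multiplying on the right by any $N_{H'}$ and tracing, a character identity. I would simply verify that $\sum_H a_H |H| \Ind_{G/H}(\triv_H) = \sum_H a_H |H| \operatorname{char}(\Q[G]e_H) = \operatorname{char}\big(\Q[G]\cdot \sum_H a_H N_H\big) = \operatorname{char}(\Q[G]\cdot 0) = 0$, using additivity of the character on the (virtual) module $\Q[G]\cdot(\sum a_H N_H)$. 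Usefulness ($1 \in \Hsg$, coefficient nonzero) transfers because the coefficient of $\Ind_{G/1}(\triv_1)$ is $a_1 \cdot |1| = a_1 \neq 0$.

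For part (2), the idea is to apply a projection from class functions (or virtual representations) back to the center of $\Q[G]$, symmetrizing over conjugacy to land in the $\Hsg$-invariant setting. Concretely, I would use the linear map $\Phi: R_\Q(G) \to Z(\Q[G])$ (or directly $\Q[G]$) that sends $\Ind_{G/H}(\triv_H)$ to $\sum_{g \in G} N_{H^g}$; since $\Hsg$ is conjugation-invariant, each $N_{H^g}$ still corresponds to a member of $\Hsg$. Under this map, a Brauer relation $0 = \sum_H a_H \Ind_{G/H}(\triv_H)$ maps to $0 = \sum_H a_H \sum_{g} N_{H^g}$. Reindexing the double sum by collecting, for a fixed conjugacy class and a fixed representative $H$, the terms contributing $N_H$: each $H$ in $\Hsg$ receives contributions $a_{H^g}$ for those $g$ with $H^g$ conjugate to $H$, but more simply one writes $\sum_H a_H \sum_g N_{H^g} = \sum_H \big(\sum_g a_{H^g}\big) N_H$ after swapping the roles — here $\sum_g a_{H^g}$ is well-defined since $g \mapsto H^g$ has fibers of size $|N_G(H)|$, so the coefficient $\frac{1}{|H|}\sum_g a_{H^g}$ that appears (after the $|H|$ normalization coming from $\Phi$ being the composite of $\Ind \mapsto e_H \Q[G]$ and multiplication by $|H|$) matches the statement. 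The factor $\frac{1}{|H|}$ arises precisely because the natural map sends $\Ind_{G/H}(\triv_H)$ to $|H| \cdot (\text{something involving } N_H)$ on the idempotent side. I would pin down the normalization by testing on a known relation (e.g. the $C_p \times C_p$ example) or by a direct trace computation. The nonvanishing of $a_1$ gives a nonzero coefficient $\frac{1}{|1|}\sum_g a_{1^g} = a_1 \neq 0$ for $N_1$, so the resulting scalar norm relation is genuine (has $a_1 \neq 0$).

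The main obstacle I anticipate is the bookkeeping around the normalization constant and the validity of the map $\Phi$: one must check that sending $\Ind_{G/H}(\triv_H) \mapsto \sum_{g\in G} N_{H^g}$ is well-defined on the level of virtual characters (i.e. descends through isomorphisms of permutation modules), not just on the distinguished generators, and that it is additive so that it sends relations to relations. The clean way to see well-definedness is to identify $\Ind_{G/H}(\triv_H)$ with the idempotent $e_H = \frac{1}{|H|}N_H$ up to the conjugacy-symmetrization: the element $\sum_{g} N_{H^g} = \sum_g g N_H g^{-1}$ is the image of $N_H$ under the $G$-conjugation-averaging projection onto $Z(\Q[G])$ (times $|G|/|N_G(H)|$... again a normalization), and this projection is manifestly additive and well-defined. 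Once the map is correctly set up, both parts reduce to unwinding definitions, and the usefulness/nonvanishing conditions are immediate. I would therefore front-load the construction of $\Phi$ and its properties, then dispatch (1) and (2) in a few lines each.
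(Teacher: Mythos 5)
Your overall strategy is the same as the paper's: the paper's proof rests on the single identity $\sum_{g\in G}\Ind_{G/H}(\triv_H)(g)\cdot g = |H|^{-1}\sum_{g\in G}gN_Hg^{-1}$ in $\Q[G]$, which is exactly the trace/conjugation-averaging computation you describe. However, the step you lean on in part (1) is invalid as written: the equality $\sum_H a_H|H|\operatorname{char}(\Q[G]e_H)=\operatorname{char}\bigl(\Q[G]\cdot\sum_H a_HN_H\bigr)$ cannot be justified by ``additivity of the character on the virtual module,'' because the assignment $\alpha\mapsto\operatorname{char}(\Q[G]\alpha)$ is not linear in $\alpha$. (Take $\alpha=1+(-1)$: the left ideal generated by $0$ is $0$, but the two characters of $\Q[G]$ do not cancel.) The repair is small and you already have the ingredients: replace the character of the left ideal $\Q[G]\alpha$ by the class function $g\mapsto\Tr(x\mapsto gx\alpha)$ on the regular module. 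This \emph{is} linear in $\alpha$; it agrees with $\operatorname{char}(\Q[G]e)$ precisely when $\alpha=e$ is idempotent (right multiplication by $e$ is then the $G$-equivariant projection onto $\Q[G]e$); and for $\alpha=N_H$ a direct count of group elements gives $|H|\,\Ind_{G/H}(\triv_H)(g)$. Applying this linear functional to $0=\sum_Ha_HN_H$ yields part (1), with usefulness as you say.

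For part (2), the well-definedness worry about $\Phi$ disappears once you adopt the same formulation: a Brauer relation asserts that the class function $F=\sum_Ha_H\Ind_{G/H}(\triv_H)$ vanishes identically, hence the central element $\sum_{g\in G}F(g)\,g$ is zero, and by the displayed identity this element equals $\sum_H\frac{a_H}{|H|}\sum_{g}N_{H^g}$. Regrouping the double sum using $|H^g|=|H|$ and the conjugation-invariance of $\Hsg$ gives exactly the stated coefficients $\frac{1}{|H|}\sum_ga_{H^g}$; no map on the representation ring need be constructed, so there is nothing to check about descent through isomorphisms of permutation modules. One small slip: the coefficient of $N_1$ in the resulting relation is $|G|a_1$, not $a_1$, but it is nonzero either way, so the relation is indeed a scalar norm relation.
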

\begin{proof}
  The statements are implicitly contained in \cite{Walter1979b}. For the sake
  of completeness we include a proof. We will make use of the fact that
  \[ \sum_{g \in G} \Ind_{G/H}(\triv_H)(g) \cdot g = \lvert H \rvert^{-1} \sum_{g \in G} g N_H g^{-1} \]
  for all subgroups $H \leq G$.
  (1): Assume now that $\sum_{H \in \Hsg} a_H N_H = 0$ is a scalar norm relation.
  Then also $\sum_{H \in \Hsg} a_H g N_H g^{-1} = 0$ for all $g \in G$ and summing over all $g \in G$ yields
  \[ 0 = \sum_{g \in G} \sum_{H \in \Hsg} a_H g N_H g^{-1}
       = \sum_{H \in \Hsg} a_H \sum_{g \in G} g N_H g^{-1}
       = \sum_{H \in \Hsg} a_H \lvert H \rvert \lvert H \rvert^{-1} \sum_{g \in G} g N_H g^{-1}. \]
  Hence 
  \[ 0 = \sum_{H \in \Hsg} a_H \lvert H \rvert \sum_{g \in G} \Ind_{G/H}(\triv_H)(g)\cdot g = \sum_{g \in G} \left(\sum_{H \in \Hsg}  a_H \lvert H \rvert  \Ind_{G/H}(\triv_H)(g)\right)\cdot g\]
  in $\QQ[G]$. Thus $\sum_{H \in \Hsg} a_H \lvert H \rvert \Ind_{G/H}(\triv_H) = 0$ is a Brauer relation, which is useful since $a_1 \neq 0$.

  (2): Assume now that $\sum_{H \in \Hsg} a_H \Ind_{G/H}(\triv_H) = 0$ is a useful Brauer relation with respect to $\Hsg$.
  Then from the above computation we conclude that
  \[ 0 = \sum_{g \in G} \sum_{H \in \Hsg} \frac{a_H}{|H|} g N_H g^{-1} = \sum_{H
  \in \Hsg} \sum_{g \in G} \frac{a_H}{|H|} N_{gHg^{-1}}, \]
  which is a scalar norm relation since the coefficient of~$H=1$ is~$a_1|G|\neq
  0$. If~$\Hsg$ is invariant under conjugation, then after reordering this relation
  becomes the claimed scalar norm relation with respect to $\Hsg$.
\end{proof}

We illustrate the second statement by the following example.

\begin{example}
  Consider the symmetric group $G = S_3$ on three letters and the set of subgroups $\Hsg = \{G, C_3, C_2, 1\}$, where $C_2$ is generated by any of the transpositions.
  Then $G$ admits the useful Brauer relation
  \[ 0 = \Ind_{G/1}(\triv_1) + 2 \Ind_{G/G}(\triv_G) - \Ind_{G/C_3}(\triv_{C_3}) - 2\Ind_{G/C_2}(\triv_{C_2}). \] 
  It is easy to see that $G$ does not admit a scalar norm relation with respect to~$\Hsg$.
  The scalar norm relation given by the proposition is
  \[
    0 = 6N_1 + 2N_G - 2N_{C_3} - 2N_{C_2} - 2N_{C_2'} - 2N_{C_2''},
  \]
  where~$C_2,C_2',C_2''$ are the three subgroups of order~$2$ of~$G$.
  Simplified, it becomes
  \[
    3 = N_{C_2} + N_{C_2'} + N_{C_2''} + N_{C_3} - N_G,
  \]
  a special case of Example~\ref{ex:rel-pq}.
\end{example}

When $\Hsg$ is the set of all nontrivial subgroups of
$G$, we have the following simple characterization for the existence
of Brauer relations.

\begin{theorem}[Funakura]\label{thm:funakura}
  The group $G$ admits a useful Brauer relation if and only if~$G$ contains a non-cyclic
  subgroup of order~$pq$, where~$p$ and~$q$ are primes (not necessarily
  distinct).
\end{theorem}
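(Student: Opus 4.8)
The plan is to reduce both directions to the scalar-norm-relation picture via Proposition~\ref{prop:brauernormrel}, since when $\Hsg$ is the set of all nontrivial subgroups it is automatically conjugation-invariant, so useful Brauer relations and scalar norm relations exist simultaneously. Thus it suffices to show: $G$ admits a scalar norm relation (with respect to all nontrivial subgroups) if and only if $G$ has a non-cyclic subgroup of order $pq$ with $p,q$ prime.

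For the "if" direction, suppose $G$ contains a non-cyclic subgroup $N$ of order $pq$. If $p=q$ then $N\cong C_p\times C_p$, and if $p\neq q$ then (since $N$ is non-cyclic) $N$ is a nonabelian semidirect product $C_p\rtimes C_q$; in both cases the Examples preceding the theorem exhibit an explicit scalar norm relation $m = \sum_{H} a_H N_H$ valid inside $\Q[N]$, supported on nontrivial subgroups of $N$ and with $a_1$-analogue a nonzero integer $m$ (namely $m=p$). To lift this to $G$, I would multiply through by $\tfrac{1}{m}$ and observe that $\tfrac{1}{m}N_N$ expands as $\tfrac{1}{m}\sum_{C}N_C - \cdots$; more cleanly, one writes $1 = \tfrac{1}{m}\big(\sum_H a_H N_H\big)$ in $\Q[N]$ where now the relation is rearranged so that the trivial-group term $N_1 = 1$ appears with coefficient $1$ on the left. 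Concretely, from $p = (\sum_{|C|=p}N_C) - N_G$ one gets $1 = \tfrac1p\sum_{|C|=p}N_C - \tfrac1p N_{C_p\times C_p}$, a norm relation over $\Q$ for $N$. Since $\Q[N]\hookrightarrow\Q[G]$ as a subalgebra and each $N_C$ for $C\le N$ is still the norm element of a nontrivial subgroup of $G$, this is a norm relation for $G$ with respect to a subset of the nontrivial subgroups of $G$, hence $G$ admits a useful Brauer relation by the converse part of Proposition~\ref{prop:brauernormrel} applied in reverse (or directly: a norm relation gives, after clearing denominators, a scalar norm relation, which gives a useful Brauer relation).

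For the "only if" direction — which I expect to be the main obstacle — I would argue by contradiction: suppose every subgroup of $G$ of prime-squared or $pq$ order is cyclic. The standard consequence of Burnside/Zassenhaus-type arguments is that such a $G$ is a \emph{$Z$-group}, i.e. all its Sylow subgroups are cyclic, and hence $G$ is metacyclic with a presentation $G = \langle a,b \mid a^m = b^n = 1,\ bab^{-1}=a^r\rangle$ with $\gcd(m,n(r-1))=1$. The task is then to show a $Z$-group admits no scalar norm relation. I would compute the primitive central idempotents of $\Q[G]$: for a $Z$-group the irreducible $\Q$-representations are induced from one-dimensional characters of the cyclic normal subgroup $\langle a\rangle$, and crucially each irreducible $\C$-character $\chi$ of $G$ restricted to any nontrivial cyclic subgroup $C$ has average value $\langle \Res_{G/C}\chi,\triv_C\rangle_C$ that can be read off; the point is that $N_H$ acts on the $\chi$-isotypic component as multiplication by $|H|\langle\Res_{G/H}\chi,\triv_H\rangle_H$. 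A scalar norm relation $\sum_H a_H N_H = 0$ with $a_1\neq 0$ would force, evaluating on the regular representation and on the trivial component separately, a system of equations; the contradiction should come from the trivial character (where every $N_H$ acts as $|H|$, giving $\sum a_H|H| = 0$, consistent) combined with a faithful character, using that in a $Z$-group every nontrivial subgroup meets the (cyclic) derived subgroup or the center in a way that prevents cancellation. I would look to Funakura's original argument, or reprove it by noting that the existence of a useful Brauer relation is equivalent to linear dependence of the permutation characters $\{\Ind_{G/H}\triv_H\}$ including $H=1$; for a $Z$-group one shows these are $\Q$-linearly independent by a counting/Möbius argument over the subgroup lattice, which is distributive enough (the lattice of subgroups of a $Z$-group is well understood) to make the incidence matrix invertible. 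The cleanest route is probably: reduce to $G$ a $Z$-group, then invoke that the Burnside ring tensored with $\Q$ has a basis given by the $\Ind_{G/H}\triv_H$ over conjugacy classes of subgroups precisely when... — here one must instead use that for $Z$-groups the relevant rank equals the number of conjugacy classes of cyclic subgroups, which equals the number of subgroups, leaving no room for a relation. I would cite Funakura for this combinatorial core and present the reduction to $Z$-groups in detail.
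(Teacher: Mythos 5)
The paper does not actually prove this theorem: its ``proof'' is a citation of Funakura's original article. Your ``if'' direction is correct and matches what the paper's surrounding material already supplies: a non-cyclic subgroup $N\le G$ of order $pq$ is either $C_p\times C_p$ or a nontrivial $C_p\rtimes C_q$, the displayed Examples give a scalar norm relation $0=-pN_1+\sum_C N_C - N_N$ inside $\Q[N]\subseteq\Q[G]$ whose terms are norm elements of subgroups of $G$ and whose coefficient of the trivial group is nonzero, and Proposition~\ref{prop:brauernormrel}(1) turns this into a useful Brauer relation of $G$. (Your aside that ``a norm relation gives, after clearing denominators, a scalar norm relation'' is false in general, but harmless here since the relation you produce is already scalar.)

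The ``only if'' direction has a genuine gap. You claim that if every subgroup of order $p^2$ or $pq$ is cyclic then $G$ is a $Z$-group (all Sylow subgroups cyclic, hence metacyclic). This is false: the quaternion group $Q_8$, every generalized quaternion $2$-group, and $\SL_2(\F_p)$ for prime $p\ge 5$ all satisfy the hypothesis --- every subgroup of order $4$ or $pq$ in them is cyclic --- yet none is a $Z$-group. The reason your reduction fails is that a non-cyclic $2$-group need not contain $C_2\times C_2$; the exceptions are exactly the generalized quaternion groups. So the class you must handle is the full Zassenhaus--Suzuki--Wolf list of groups satisfying the $p^2$- and $pq$-conditions, and your character-theoretic argument, which leans on a metacyclic presentation $\langle a,b\rangle$, does not apply to $Q_8$ or $\SL_2(\F_p)$. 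These are precisely the delicate cases: the paper's Theorem~\ref{thm:genrel} and the example of $\SL_2(\F_{17})$ show that some such groups admit norm relations while admitting no useful Brauer relation, so any proof must genuinely distinguish the two notions there. Since you ultimately propose to ``cite Funakura for the combinatorial core,'' the hard implication is not established by the proposal as written.
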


\begin{proof}
  This is \cite[Theorem 9]{Funakura1978}.
\end{proof}

We now turn to the question of existence for norm relations.
Together with Theorem~\ref{thm:funakura} this will at the same time show that there are in general more norm relations than scalar norm relations.
As a first step towards the classification, we formulate a representation theoretic criterion.
In the following we denote by $e_1,\dotsc,e_r$ the central primitive idempotents of the group algebra $\QQ[G]$.

\begin{proposition}\label{prop:generel}
  Let~$\Hsg$ be a set of nontrivial subgroups of a finite group~$G$.
  Let~$\QQbar$ be the algebraic closure of~$\Q$ in~$\C$.
  Then the following are equivalent:
  \begin{enumerate}
    \item\label{item:generel} there exists a norm relation in~$G$ with respect
      to $\Hsg$;
    \item\label{item:tsideal} we have $\langle N_H \mid H\in\Hsg\rangle_{\QQ[G]} = \QQ[G]$ (as a two
      sided ideal);
    \item\label{item:idemp} for all~$i=1,\dots,r$, there exists~$H\in\Hsg$ such
      that~$e_iN_H\neq 0$;
    \item\label{item:repsQ} for every simple $\QQ[G]$-module $V$, there exists~$H\in\Hsg$ such that the space of fixed
      points~$V^H$ is nonzero;
    \item\label{item:repsQbar} for every simple $\QQbar[G]$-module $V$, there exists~$H\in\Hsg$ such that the space of fixed
      points~$V^H$ is nonzero;
    \item\label{item:repsC} for every simple $\CC[G]$-module $V$, there exists~$H\in\Hsg$ such that the space of fixed
      points~$V^H$ is nonzero;
    \item\label{item:sphere} for every unitary~$\CC[G]$-module~$V$, there
      exists~$H\in\Hsg$ such that~$H$ has a fixed point on the
      unit sphere of~$V$ with respect to the invariant Hermitian norm.
  \end{enumerate}
\end{proposition}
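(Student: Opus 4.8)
The plan is to prove the equivalences by establishing a cycle of implications together with a few easy side-implications, working through increasingly concrete representation-theoretic reformulations of the algebraic condition. First I would show $(\ref{item:generel}) \Leftrightarrow (\ref{item:tsideal})$: a norm relation $1 = \sum_i a_i N_{H_i} b_i$ is literally the statement that $1$ lies in the two-sided ideal $\langle N_H \mid H \in \Hsg\rangle_{\QQ[G]}$, and since this ideal contains $1$ iff it is all of $\QQ[G]$, the two conditions are the same. Next, $(\ref{item:tsideal}) \Leftrightarrow (\ref{item:idemp})$: the two-sided ideals of the semisimple algebra $\QQ[G] = \prod_i e_i\QQ[G]$ are exactly the products of subsets of the simple factors $e_i \QQ[G]$, so the ideal generated by the $N_H$ equals the whole algebra iff for each $i$ it has nonzero image in $e_i\QQ[G]$, i.e.\ iff $e_i N_H \neq 0$ for some $H \in \Hsg$ — here one uses that $e_i$ is central, so $e_i \langle N_H \mid H\in\Hsg\rangle_{\QQ[G]}$ is generated by the $e_i N_H$.

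Then I would translate the idempotent condition into the module conditions. For $(\ref{item:idemp}) \Leftrightarrow (\ref{item:repsQ})$: if $V$ is the simple $\QQ[G]$-module (up to the usual subtlety that $e_i\QQ[G]$ is a matrix algebra over a division ring $D_i$, so $V$ is the unique simple module of that factor), then $e_i N_H \neq 0$ iff $N_H$ acts nontrivially on $V$; and since $N_H = \sum_{h\in H} h$ and $\frac{1}{|H|}N_H$ is the idempotent projecting onto $V^H$ (using the last bullet point of the basic properties, with $R = \QQ$), we have $N_H$ acting nonzero on $V$ iff $V^H \neq 0$. The equivalences $(\ref{item:repsQ}) \Leftrightarrow (\ref{item:repsQbar}) \Leftrightarrow (\ref{item:repsC})$ follow from a descent/extension-of-scalars argument: the functor of $H$-fixed points commutes with field extension ($V^H \otimes_k k' = (V\otimes_k k')^H$ since taking fixed points is computing the kernel of the finite set of linear maps $v\mapsto hv - v$), and every simple $\CC[G]$-module occurs as a summand of $V\otimes_\QQ \CC$ for some simple $\QQ[G]$-module $V$ and conversely every simple $\QQ[G]$-module has some simple $\CC[G]$-module as a constituent of its complexification, so "$V^H \neq 0$ for some $H$" holds for all simple modules over one field iff it does over the other; $\QQbar$ sits between.

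Finally $(\ref{item:repsC}) \Leftrightarrow (\ref{item:sphere})$: every $\CC[G]$-module carries a $G$-invariant Hermitian inner product (average any inner product over $G$), so a unitary $\CC[G]$-module is the same thing as a finite-dimensional $\CC[G]$-module, and $V$ decomposes as an orthogonal direct sum of simple submodules. A subgroup $H$ fixes a point on the unit sphere iff $V^H \neq 0$ (a nonzero fixed vector can be rescaled to lie on the sphere), so the sphere condition for a given $V$ is exactly "$V^H \neq 0$ for some $H\in\Hsg$"; requiring this for every unitary module is the same as requiring it for every simple module, since fixed points of a direct sum are the direct sum of fixed points and every simple module embeds in some module. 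I expect the main obstacle to be handling the division-algebra subtlety in $(\ref{item:idemp}) \Leftrightarrow (\ref{item:repsQ})$ cleanly — one must be careful that "$e_i N_H \neq 0$ in the matrix-over-$D_i$ algebra $e_i\QQ[G]$" is genuinely equivalent to "$N_H$ acts nonzero on the simple module $V_i$", which is true but uses that a nonzero element of a simple algebra acts nonzero on its simple module — and in making the extension-of-scalars step precise enough that the quantifier "for some $H$" is preserved in both directions.
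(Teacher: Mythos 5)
Your proposal follows the paper's route almost step for step: the equivalences (\ref{item:generel})$\Leftrightarrow$(\ref{item:tsideal})$\Leftrightarrow$(\ref{item:idemp})$\Leftrightarrow$(\ref{item:repsQ}) via the two-sided ideal, the central primitive idempotents and the faithfulness of the simple factor on its simple module, and the equivalence (\ref{item:repsC})$\Leftrightarrow$(\ref{item:sphere}) via unitarizability and passing to a simple submodule, are all argued exactly as in the paper and are correct.

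The one genuine gap is in the direction (\ref{item:repsQ})$\Rightarrow$(\ref{item:repsC}) (equivalently, (\ref{item:repsQ})$\Rightarrow$(\ref{item:repsQbar})). Given a simple $\CC[G]$-module $W$, you realize it as a summand of $V\otimes_\QQ\CC$ for a simple $\QQ[G]$-module $V$ and invoke $V^H\neq 0$ to get $(V\otimes_\QQ\CC)^H\neq 0$. But $(V\otimes_\QQ\CC)^H = \bigoplus_j W_j^H$ over the simple constituents $W_j$ of $V\otimes_\QQ\CC$, so this only shows that \emph{some} constituent has nonzero $H$-fixed points, not the particular $W$ you started with. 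You flag ``making the extension-of-scalars step precise enough that the quantifier is preserved in both directions'' as a possible obstacle, but resolving it needs an actual extra input, which the paper supplies: over $\QQbar$ the simple constituents $W_j$ of $V\otimes_\QQ\QQbar$ are pairwise Galois-conjugate, and since $N_H$ has rational coefficients, $\dim_{\QQbar}W_j^H$ is independent of $j$; hence one constituent has nonzero fixed points if and only if all do, and in particular $W^H\neq 0$. (The reverse direction (\ref{item:repsC})$\Rightarrow$(\ref{item:repsQ}) is fine as you state it, since $W^H\subseteq (V\otimes_\QQ\CC)^H \cong V^H\otimes_\QQ\CC$.) Without this Galois-conjugacy observation, or some substitute for it such as rerunning the idempotent argument of (\ref{item:tsideal})$\Leftrightarrow$(\ref{item:idemp}) over $\C$ and comparing with $\Q$ via the same conjugacy of idempotents, the implication is not established.
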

\begin{proof}
  The set of elements of the form~$\sum_{i=1}^\ell a_i N_{H_i} b_i$
  with~$a_i,b_i\in\QQ[G]$ and~$H_i\in\Hsg$ is exactly the two-sided
  ideal~$\langle N_H \mid H\in\Hsg\rangle_{\QQ[G]}$. Moreover a two-sided
  ideal contains~$1$ if and only if it equals the whole ring. This proves the
  equivalence between~(\ref{item:generel}) and~(\ref{item:tsideal}).

  For every two-sided ideal~$J$ of~$\QQ[G]$ we have~$J = \sum_{i=1}^r e_i J$,
  so~$J = \QQ[G]$ if and only if~$e_iJ = e_i\QQ[G]$ for every~$i \in \{1,\dotsc,r\}$. In addition,
  $e_iJ$ projects isomorphically to a two-sided ideal in the simple
  algebra~$\QQ[G]/(1-e_i)$, so it is either
  equal to~$e_i\QQ[G]$ or zero. Applying this to~$J = \langle N_H \mid
  H\in\Hsg\rangle_{\QQ[G]}$, and noting that~$e_i J = 0$ if and only
  if~$e_i N_H = 0$ for all~$H\in\Hsg$, this proves the equivalence
  between~(\ref{item:tsideal}) and~(\ref{item:idemp}).

  For every~$\QQ[G]$-module~$V$ and subgroup~$H \leq G$, we
  have~$(\frac{1}{|H|}N_H) \cdot V = V^H$.
  Let~$1 \leq i\le r$, and let~$V_i$ be the (up to isomorphism) unique simple~$\Q[G]$-module
  such that~$e_iV_i\neq 0$. %
  Since the simple algebra~$\Q[G]/(1-e_i)$ acts faithfully on~$V_i$,
  we have
  \[
    e_iN_H=0
    \Longleftrightarrow N_H \cdot V_i=0
    \Longleftrightarrow \Bigl(\frac{1}{|H|}N_H\Bigr) \cdot V_i=0
    \Longleftrightarrow V_i^H=0.
  \]
  This proves the
  equivalence between~(\ref{item:idemp}) and~(\ref{item:repsQ}).

  Let~$K\subseteq L$ be subfields of~$\C$. For every~$K[G]$-module~$V$ and
  every subgroup~$H\le G$, we have~$\dim_K V^H = \dim_L (V\otimes_K L)^H$; in
  particular~$V^H\neq 0$ if and only if~$(V\otimes_K L)^H\neq 0$. In addition,
  every simple~$L[G]$-module is isomorphic to a submodule of~$V\otimes_K L$ for
  some simple~$K[G]$-module~$V$. Applying this to~$\QQ\subseteq\QQbar$
  and~$\QQbar\subseteq\C$, we obtain~(\ref{item:repsC})
  $\Rightarrow$~(\ref{item:repsQbar}) $\Rightarrow$~(\ref{item:repsQ}).

  Let~$W$ be a simple~$\QQbar[G]$-module, and let~$V$ be a simple~$\Q[G]$-module
  such that~$W$ is isomorphic to a submodule of~$V\otimes_\QQ \QQbar$.
  Let~$V\otimes_\QQ \QQbar \cong
  \bigoplus_{j=1}^k W_j$ be a decomposition into simple~$\QQbar[G]$-modules,
  so that~$W$ is isomorphic to one of the~$W_j$.
  Since the~$W_j$ are pairwise Galois conjugate, we have~$\dim_{\QQbar} W_j^H =
  \dim_{\QQbar} W_1^H$ for all~$j$, so that~$V^H\neq 0$ implies that for all~$j$
  we have~$W_j^H\neq 0$. In particular~$W^H\neq 0$ and we get~(\ref{item:repsQ})
  $\Rightarrow$~(\ref{item:repsQbar}).

  The simple~$\C[G]$-modules are exactly the~$V \otimes_{\QQbar} \C$ where~$V$
  ranges over the simple~$\QQbar[G]$-modules, so that we
  have~(\ref{item:repsQbar}) $\Rightarrow$~(\ref{item:repsC}).

  Let us prove that~(\ref{item:repsC}) implies~(\ref{item:sphere}).
  Let~$V$ be a unitary $\CC[G]$-module. It contains a
  simple~$\CC[G]$-submodule~$V'$, and therefore by~(\ref{item:repsC}) there
  exists~$H\in\Hsg$ and~$v\in (V')^H\setminus\{0\}$, so that~$v\in
  V^{H}$. Then~$v/\|v\|$ is a fixed point of~$H$ on the unit sphere of~$V$.
  Conversely, since every~$\C[G]$-module is unitarizable, (\ref{item:sphere}) implies~(\ref{item:repsC}).
\end{proof}

This can in turn be used to characterize groups that admit norm relations.

\begin{theorem}\label{thm:genrel}
  Let $G$ be a finite group. Then the following are equivalent:
  \begin{enumerate}
    \item\label{item:generelall} the group $G$ admits a norm relation;
    \item\label{item:generelcyc} the group $G$ admits a norm
      relation with respect to the set of nontrivial
      cyclic subgroups of~$G$;
    \item\label{item:wolf} the group $G$ has a non-cyclic subgroup of
      order~$pq$, where~$p$ and~$q$ are prime, or a subgroup isomorphic
      to~$\SL_2(\F_p)$ where~$p>5$ is prime;
    \item\label{item:wolfsmall} the group $G$ contains a non-cyclic subgroup of order~$pq$, where~$p$ and~$q$ are
      prime, or a subgroup isomorphic to~$\SL_2(\F_p)$ where~$p=2^{2^k}+1$ is a Fermat
      prime with~$k>1$.
  \end{enumerate}
\end{theorem}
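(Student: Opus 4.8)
The plan is to establish the cycle of implications $(\ref{item:generelcyc}) \Rightarrow (\ref{item:generelall}) \Rightarrow (\ref{item:wolf})$, and separately $(\ref{item:wolfsmall}) \Rightarrow (\ref{item:generelcyc})$ together with $(\ref{item:wolf}) \Rightarrow (\ref{item:wolfsmall})$. The first implication is trivial, since a norm relation with respect to a subset of subgroups is in particular a norm relation. For $(\ref{item:generelall}) \Rightarrow (\ref{item:wolf})$, I would argue contrapositively: suppose $G$ has \emph{no} non-cyclic subgroup of order $pq$ and \emph{no} subgroup isomorphic to $\SL_2(\F_p)$ with $p>5$ prime. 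Here is where I invoke the theorem of Wolf (via \cite{Wolf1972,Wall2013}): these are precisely the conditions characterizing groups that act freely and linearly on a sphere, i.e.\ groups admitting a fixed-point-free unitary representation. So under our hypothesis there is a unitary $\CC[G]$-module $V$ on which every nontrivial element of $G$ acts without nonzero fixed vectors; equivalently, no nontrivial subgroup $H$ has a fixed point on the unit sphere of $V$. By the equivalence $(\ref{item:generel}) \Leftrightarrow (\ref{item:sphere})$ of Proposition~\ref{prop:generel} (applied with $\Hsg$ the set of all nontrivial subgroups), $G$ admits no norm relation. The care needed here is to correctly match the group-theoretic conditions appearing in Wolf's classification of space-form groups with the statement in $(\ref{item:wolf})$ — in particular the $\SL_2(\F_p)$ family — but this is a matter of citation, not new argument.

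For $(\ref{item:wolf}) \Rightarrow (\ref{item:wolfsmall})$, I would show that if $G$ contains a subgroup $\SL_2(\F_p)$ with $p > 5$ prime but $p$ is \emph{not} a Fermat prime of the form $2^{2^k}+1$ with $k>1$, then $\SL_2(\F_p)$ already contains a non-cyclic subgroup of order $qr$ for primes $q,r$, so $G$ does too and the first alternative of $(\ref{item:wolfsmall})$ applies; one also has to dispense with the Fermat primes $p = 3, 5$ (corresponding to $k = 0, 1$), but $\SL_2(\F_3)$ and $\SL_2(\F_5)$ themselves contain non-cyclic subgroups of prime-power-times-prime order (e.g.\ $\SL_2(\F_5)$ contains $\SL_2(\F_3) \supset Q_8$, and a Klein four group's preimage, hence non-cyclic subgroups of order $4$, $6$). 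The key elementary fact is: $\SL_2(\F_p)$ contains a non-cyclic subgroup of order $pq$ (where $q \mid p-1$, e.g.\ the normalizer-type subgroup generated by a unipotent and a suitable diagonal element acting nontrivially on it) unless $p - 1$ is a power of $2$ \emph{and} that does not itself yield a non-cyclic subgroup of order $q r$ another way — and $p-1$ being a power of $2$ with $p$ prime forces $p$ to be a Fermat prime. So the only cases of $(\ref{item:wolf})$ not covered by the $pq$-alternative are $\SL_2(\F_p)$ for Fermat primes $p$, and for $k > 1$ this is exactly the second alternative of $(\ref{item:wolfsmall})$, while for $k \le 1$ one checks $\SL_2(\F_3), \SL_2(\F_5)$ directly as above.

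For $(\ref{item:wolfsmall}) \Rightarrow (\ref{item:generelcyc})$, I use Proposition~\ref{prop:generel} in the direction $(\ref{item:repsC}) \Rightarrow (\ref{item:generel})$, with $\Hsg$ the set of nontrivial \emph{cyclic} subgroups of $G$: I must check that for every simple $\CC[G]$-module $V$, some nontrivial cyclic subgroup has a nonzero fixed vector. Equivalently, $V$ restricted to some nontrivial cyclic subgroup contains the trivial character, i.e.\ $V$ is not fixed-point-free as a $G$-representation. So it suffices to show that a group $G$ containing either a non-cyclic subgroup of order $pq$ or a copy of $\SL_2(\F_p)$ for a Fermat prime $p = 2^{2^k}+1$, $k > 1$, admits \emph{no} fixed-point-free complex representation. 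A non-cyclic group of order $pq$ is either $C_p \times C_q$ with $p = q$, or a non-abelian $C_p \rtimes C_q$; in the first case $C_p \times C_p$ visibly cannot act freely on any sphere (it has rank $2$), and in the semidirect-product case one checks directly from the character table (or from the standard $p$-rank obstruction, since the unique irreducible of dimension $>1$ is not free) that there is always a nontrivial element acting with a fixed vector; moreover fixed-point-freeness passes to subgroups, so $G$ inherits the obstruction. For the $\SL_2(\F_p)$ case, I would invoke the known fact (part of the classification underlying Wolf's theorem — the ``$p_q$-condition'' / the list of which $\SL_2(\F_p)$ are space-form groups) that $\SL_2(\F_p)$ admits a fixed-point-free representation precisely when $p \in \{3, 5\}$ and more generally satisfies the space-form conditions only for those small primes; for $p = 2^{2^k}+1$ with $k > 1$ (so $p \ge 17$), $\SL_2(\F_p)$ fails the condition, hence has no fixed-point-free representation, hence neither does $G$. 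The main obstacle, and the part requiring the most care, is this last point: pinning down \emph{exactly} for which primes $\SL_2(\F_p)$ is fixed-point-free and reconciling that boundary with ``$k > 1$.'' I expect to handle it by citing the relevant portion of \cite{Wolf1972} (or \cite{Wall2013}) giving the arithmetic condition on $p$ — essentially that $\SL_2(\F_p)$ acts freely on a sphere iff every prime divisor $q$ of $|\SL_2(\F_p)| = p(p-1)(p+1)$ satisfies a $2$-condition that fails as soon as $p - 1$ has an odd prime factor or $p+1$ does in a bad way — and verifying that $p = 2^{2^k}+1$ with $k > 1$ falls on the failing side while $k \le 1$ does not, matching the already-established equivalence with $(\ref{item:wolf})$.
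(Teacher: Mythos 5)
Your overall strategy is essentially the paper's: both proofs rest on the sphere criterion of Proposition~\ref{prop:generel}, Wolf's theorem, and the observation that a non-Fermat prime~$p$ yields a non-cyclic subgroup of~$\SL_2(\F_p)$ of order~$pq$. You arrange the implications as a cycle $(2)\Rightarrow(1)\Rightarrow(3)\Rightarrow(4)\Rightarrow(2)$ where the paper proves pairwise equivalences; the only substantive difference is that the paper gets $(4)\Rightarrow(2)$ for free from the biconditional $(2)\Leftrightarrow(3)$ supplied by Wolf's theorem, whereas you prove it directly by showing that the subgroups named in~$(4)$ admit no fixed-point-free representation. That route works (the $pq$ case is elementary, as you indicate), but for the $\SL_2(\F_p)$ case the cleanest justification is simply to apply Wolf's biconditional to the subgroup $\SL_2(\F_p)$ itself: it contains a copy of $\SL_2(\F_p)$ with $p>5$, hence acts freely on no sphere. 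The ``arithmetic condition on the prime divisors of $|\SL_2(\F_p)|$'' you sketch is not the right mechanism --- for a Fermat prime $p\ge 17$ the group $\SL_2(\F_p)$ in fact satisfies all the $pq$-conditions, and its exclusion comes from the deeper part of the classification (e.g.\ Zassenhaus's theorem that $\SL_2(\F_5)$ is the only perfect group with a fixed-point-free representation).

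One concrete error: $\SL_2(\F_3)$ and $\SL_2(\F_5)$ do \emph{not} contain non-cyclic subgroups of order $4$ or $6$, nor of any order $pq$. Each has $-I$ as its unique involution, so neither contains a Klein four group or an $S_3$; the group $Q_8$ has order $8$, which is not a product of two primes; and indeed both groups are binary polyhedral groups acting freely on $S^3$, so they admit no norm relation at all. Fortunately this false claim occurs only in your attempt to ``dispense with'' $p=3,5$ in the implication $(3)\Rightarrow(4)$, a case that never arises: condition~$(3)$ already requires $p>5$, and then either $p$ is a Fermat prime, in which case $p\ge 17$ and $k>1$ automatically, or it is not, in which case your (correct) unipotent-times-diagonal construction applies. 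Delete that sentence and the step is fine.
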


\begin{proof}
  Clearly~(\ref{item:generelcyc}) implies~(\ref{item:generelall}).
  The converse follows from Remark~\ref{rem:defrels}~(\ref{item:replace}).

  Applying criterion~(\ref{item:sphere}) of Proposition~\ref{prop:generel}, we see
  that~(\ref{item:generelcyc}) is equivalent to
  the nonexistence of a unitary~$\CC[G]$-module~$V$ such that for every~$g\neq
  1$, the element~$g$ does not have fixed points on the unit sphere of~$V$,
  in other words such that~$G$ acts freely on the unit sphere of~$V$.
  The equivalence between this last statement and~(\ref{item:wolf}) is Wolf's
  theorem~(\cite[Theorem 6.1]{Wall2013}).

  The equivalence between~(\ref{item:wolf}) and~(\ref{item:wolfsmall}) follows from
  observing that when~$p$ is not a Fermat prime, we may pick a prime~$q\neq 2$
  dividing~$p-1$ and an element~$a\in \F_p^\times$ of order~$q$, and that the
  subgroup of~$\SL_2(\F_p)$ generated
  by~$(\begin{smallmatrix}a & 0 \\ 0 & a^{-1}\end{smallmatrix})$
  and~$(\begin{smallmatrix}1 & 1 \\ 0 & 1\end{smallmatrix})$
  is a noncyclic group of order~$pq$.
\end{proof}

\begin{example}
  In view of Theorem~\ref{thm:genrel} compared with Theorem~\ref{thm:funakura},
  the smallest group that admits a norm relation but no scalar norm relation is
  the group~$\SL_2(\F_{17})$ of cardinality~$4896$. It admits a norm relation with
  denominator~$17$ with respect to the set of subgroups of index at most~$1632$.
\end{example}

Even if a group admits both a scalar norm relation and a norm relation,
there might still be a difference when it comes to the subgroups that are involved in the relations.
The following example illustrates this phenomenon.

\begin{example}\label{ex:normdiff1}
  Consider the direct product~$G = C_2 \times \mathrm{SU}_3(\FF_2)$ of order~$432$.
  Then the smallest~$n\ge 1$ such that~$G$ admits a scalar norm relation with
  respect to the set of subgroups of index at most~$n$ is~$72$, but the
  smallest~$n$ such that~$G$ admits a norm relation with respect to
  the set of subgroups of index at most~$n$ is~$54$.
\end{example}

\begin{remark}
  In addition to an existence criterion like Theorem~\ref{thm:genrel} or
  Proposition~\ref{prop:generel}, it would be interesting to establish a complete
  classification of norm relations similar to the existing one for Brauer
  relations~\cite{Bartel2015, Bartel2014}.
\end{remark}

\subsection{Arithmetic properties of relations}

\begin{definition}\label{def:optiden}
  Let~$\Hsg$ be a set of nontrivial subgroups of~$G$.
  We define the \emph{optimal denominator~$d(\Hsg)$ relative to
  $\Hsg$}, to be the unique nonnegative integer such that
  \[
    d(\Hsg)\ZZ = \ZZ \cap \langle N_H \mid
    H\in\Hsg\rangle_{\ZZ[G]}.
  \]
  Let
  \[
    1 = \sum_{i=1}^\ell a_i N_{H_i} b_i
  \]
  be a norm relation with~$H_i\in \Hsg$ and $a_i, b_i \in \Q[G]$.
  The least common denominator of the~coefficients of the~$a_i$ and~$b_i$ is called the \emph{denominator} of the
  relation.
\end{definition}

\begin{remark}
  We have~$d(\Hsg)>0$ if and only if there exists a norm relation over~$\Q$.
  In that case, the optimal denominator divides the denominator of every
  relation, and there exists a relation with optimal denominator.
\end{remark}

For arithmetic applications, it is desirable to have a relation with
denominator as small as possible, and more precisely with denominator divisible
by as few primes as possible (see Corollary~\ref{cor:relmaxorder},
Corollary~\ref{cor:relSunits} and Proposition~\ref{prop:apl_normrel}). The
following proposition characterizes the existence of relations with denominator
coprime to a given~$p$. In addition, Theorem~\ref{thm:denrelG} says that the primes
that do not divide~$|G|$ can always be removed from the denominator of
norm relations.

\begin{remark}
  Consider a scalar norm relation $\rela$ of the form $0 = \sum_{H \in \Hsg}
  a_H N_{H}$ with $a_H \in \Z$. Since $1 = \sum_{H\in\Hsg} -\frac{a_H}{a_1} N_H$, we will view $\rela$
  as a norm relation and define its denominator to be the denominator of the
  corresponding norm relation.
  Thus any scalar norm relation with denominator $d$ is of the form
  \[ d = \sum_{H \in \Hsg} b_H N_H \]
  with $d, b_H \in \Z$ coprime.
\end{remark}

\begin{proposition}\label{prop:dencoprime}
  Let~$\Hsg$ be a set of nontrivial subgroups of~$G$, and let~$p$ be a
  prime number. Let~$J$ be the Jacobson radical of~$\FF_p[G]$.
  Then the following are equivalent:
  \begin{enumerate}
    \item\label{item:denom-nop} $p\nmid d(\Hsg)$;
    \item\label{item:modprel} there exists a norm relation over~$\FF_p$ with
      respect to~$\Hsg$;
    \item\label{item:modpJrel} there exists an identity of the form
      \[
        1 = \sum_i a_i N_{H_i} b_i
      \]
      where~$a_i,b_i\in \FF_p[G]/J$ and the identity holds in~$\FF_p[G]/J$;
    \item\label{item:modprelreps} for every simple~$\F_p[G]$-module~$V$, there
      exists~$H\in\Hsg$ such that~$N_H\cdot V\neq 0$;
    \item\label{item:Fpbarrelreps} for every
      simple~$\overline{\F}_p[G]$-module~$V$, there exists~$H\in\Hsg$
      such that~$N_H\cdot V\neq 0$.
  \end{enumerate}
\end{proposition}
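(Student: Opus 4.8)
The plan is to prove the cyclic chain of implications
\[
  (\ref{item:denom-nop}) \Leftrightarrow (\ref{item:modprel})
  \Leftrightarrow (\ref{item:modpJrel})
  \Leftrightarrow (\ref{item:modprelreps})
  \Leftrightarrow (\ref{item:Fpbarrelreps}),
\]
largely mirroring the argument of Proposition~\ref{prop:generel} but now working over~$\FF_p$ and being careful about the fact that~$\FF_p[G]$ is not semisimple. First I would observe that the set of elements~$\sum_i a_i N_{H_i} b_i$ with~$a_i,b_i\in\FF_p[G]$ and~$H_i\in\Hsg$ is precisely the two-sided ideal~$I := \langle N_H \mid H\in\Hsg\rangle_{\FF_p[G]}$, and that this ideal is the image of~$\langle N_H \mid H\in\Hsg\rangle_{\ZZ[G]}$ under reduction mod~$p$. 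Hence~$1\in I$ iff~$I = \FF_p[G]$, which translates into the statement that the reduction mod~$p$ of~$\langle N_H\rangle_{\ZZ[G]}$ is all of~$\FF_p[G]$; by standard commutative algebra (looking at the~$\ZZ$-component, or equivalently at the cokernel of~$\ZZ\to\ZZ[G]/\langle N_H\rangle_{\ZZ[G]}$) this happens exactly when~$p\nmid d(\Hsg)$. This gives~$(\ref{item:denom-nop})\Leftrightarrow(\ref{item:modprel})$.

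For~$(\ref{item:modprel})\Leftrightarrow(\ref{item:modpJrel})$: the Jacobson radical~$J$ is a nilpotent two-sided ideal, so~$\FF_p[G]/J$ is semisimple, and the image of~$I$ in~$\FF_p[G]/J$ equals all of~$\FF_p[G]/J$ iff~$I + J = \FF_p[G]$. But an element~$x$ with~$x\equiv 1\pmod J$ is a unit in~$\FF_p[G]$ (since~$1-x$ is in the nilpotent ideal~$J$, hence nilpotent), so if~$I+J=\FF_p[G]$ then~$I$ contains a unit and therefore~$I = \FF_p[G]$; the converse is trivial. This is the one place where the non-semisimplicity genuinely intervenes, and I expect it to be the main subtlety of the proof — one must resist the temptation to treat~$\FF_p[G]$ like~$\QQ[G]$ and instead use the radical-lifting-of-units argument.

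For the representation-theoretic equivalences: over the semisimple ring~$\bar A := \FF_p[G]/J$, write~$\bar A = \prod_j B_j$ as a product of simple algebras, with~$\bar e_j$ the corresponding central idempotents, and let~$V_j$ be the simple module on which~$B_j$ acts faithfully. As in Proposition~\ref{prop:generel}, a two-sided ideal of~$\bar A$ equals~$\bar A$ iff it meets every factor~$B_j$ nontrivially, and the image~$\bar I$ of~$I$ meets~$B_j$ nontrivially iff~$\bar e_j N_H\ne 0$ for some~$H\in\Hsg$ iff~$N_H\cdot V_j\ne 0$ for some~$H$; since every simple~$\FF_p[G]$-module is a simple~$\bar A$-module and conversely (as~$J$ annihilates every simple module), this yields~$(\ref{item:modpJrel})\Leftrightarrow(\ref{item:modprelreps})$. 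Finally~$(\ref{item:modprelreps})\Leftrightarrow(\ref{item:Fpbarrelreps})$ follows from the base-change argument used in Proposition~\ref{prop:generel}: for a subgroup~$H$ and an~$\FF_p[G]$-module~$V$ one has~$N_H\cdot(V\otimes_{\FF_p}\overline{\FF}_p) = (N_H\cdot V)\otimes_{\FF_p}\overline{\FF}_p$, so~$N_H\cdot V\ne 0$ iff~$N_H$ acts nontrivially on the base change; and every simple~$\overline{\FF}_p[G]$-module is a direct summand of~$V\otimes_{\FF_p}\overline{\FF}_p$ for some simple~$\FF_p[G]$-module~$V$, with all the~$\Gal(\overline{\FF}_p/\FF_p)$-conjugate summands having the same property with respect to the action of the rational element~$N_H$. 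I would close by remarking that all these modules are finite-dimensional so no care about infinite-dimensional subtleties is needed, and that criterion~(\ref{item:Fpbarrelreps}) is the convenient one for later computations since~$\overline{\FF}_p[G]$-modules are classified by Brauer characters.
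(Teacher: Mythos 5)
Your proposal is correct and follows the same overall architecture as the paper's proof: the same chain of equivalences, the same use of the nilpotence of~$J$ to lift units from~$\FF_p[G]/J$ to~$\FF_p[G]$ for (\ref{item:modprel})$\Leftrightarrow$(\ref{item:modpJrel}), and the same reduction of (\ref{item:modpJrel})$\Leftrightarrow$(\ref{item:modprelreps})$\Leftrightarrow$(\ref{item:Fpbarrelreps}) to the idempotent and base-change arguments of Proposition~\ref{prop:generel} (and you correctly work with~$N_H\cdot V\neq 0$ rather than~$V^H\neq 0$, which matters since~$|H|$ need not be invertible mod~$p$). The one place where you genuinely diverge is (\ref{item:denom-nop})$\Leftrightarrow$(\ref{item:modprel}): the paper lifts a mod-$p$ relation to an element~$\delta\in\Z[G]$, observes that~$N_{\Z[G]/\Z}(\delta)\equiv 1\bmod p$, and inverts~$\delta$ in~$\Q[G]$ to produce an explicit relation with denominator coprime to~$p$; you instead argue ideal-theoretically that~$\langle N_H\rangle_{\Z[G]}+p\Z[G]=\Z[G]$ iff~$p\nmid d(\Hsg)$. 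Your route is arguably cleaner, but the phrase ``looking at the $\Z$-component'' is where it is thinnest: knowing~$1\in \langle N_H\rangle_{\Z[G]}+p\Z[G]$ does not immediately give an integer coprime to~$p$ in~$\Z\cap\langle N_H\rangle_{\Z[G]}$. You need either Nakayama applied to the finitely generated~$\Z_{(p)}$-module~$(\Z[G]/\langle N_H\rangle_{\Z[G]})_{(p)}$, or the paper's norm/adjugate trick (if~$1=j+px$ with~$j$ in the ideal, then~$N_{\Z[G]/\Z}(j)=\mathrm{adj}(j)\cdot j$ lies in the ideal, is an integer, and is~$\equiv 1\bmod p$). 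Either patch is one line, so this is a presentational gap rather than a mathematical one.
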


\begin{proof}
  It is clear that~(\ref{item:denom-nop}) implies~(\ref{item:modprel}).
  Conversely, assume that
  \[
    1 = \sum_i \bar{a}_i N_{H_i} \bar{b}_i
  \]
  is a relation over~$\FF_p$. Pick arbitrary lifts~$a_i,b_i\in\Z[G]$
  of~$\bar{a}_i,\bar{b}_i$, and let
  \[
    \delta = \sum_i a_i N_{H_i} b_i.
  \]
  We have~$N_{\Z[G]/\Z}(\delta) \equiv N_{\FF_p[G]/\FF_p}(1) \equiv 1 \bmod p$,
  which is nonzero. Therefore the norm is nonzero, the element~$\delta$ is
  invertible in~$\Q[G]$ and the denominator~$d$ of~$\delta^{-1}$ is coprime to~$p$.
  We therefore obtain the relation
  \[
    d = \sum_i (d\delta^{-1})a_i N_{H_i} b_i
  \]
  with~$d\in\Z$ coprime to~$p$ and $(d\delta^{-1})a_i\in\Z[G]$, and
  therefore~$p\nmid d(\Hsg)$.
  This proves that~(\ref{item:modprel}) implies~(\ref{item:denom-nop}).

  It is clear that~(\ref{item:modprel}) implies~(\ref{item:modpJrel}).
  Conversely, assume that
  \[
    1 = \sum_i \bar{a}_i N_{H_i} \bar{b}_i
  \]
  holds in~$\FF_p[G]/J$.
  Pick arbitrary lifts~$a_i,b_i\in\FF_p[G]$
  of~$\bar{a}_i,\bar{b}_i$, and let
  \[
    \delta = \sum_i a_i N_{H_i} b_i.
  \]
  We have~$\delta \equiv 1 \bmod J$; since~$1$ is invertible and~$J$ is a nilpotent
  two-sided ideal, this implies that~$\delta$ is invertible.
  We therefore have the relation
  \[
    1 = \sum_i \delta^{-1}a_i N_{H_i} b_i
  \]
  in~$\FF_p[G]$.
  This proves that~(\ref{item:modpJrel}) implies~(\ref{item:modprel}).

  The proof of the equivalence between~(\ref{item:modpJrel})
  and~(\ref{item:modprelreps}) is identical to that of
  Proposition~\ref{prop:generel} by considering the central primitive
  idempotents of the semisimple algebra~$\FF_p[G]/J$.

  The proof of the equivalence between~(\ref{item:modprelreps})
  and~(\ref{item:Fpbarrelreps}) is identical to that of
  Proposition~\ref{prop:generel}.
\end{proof}

\begin{remark}
  It would be interesting to find a general existence criterion similar to
  Theorem~\ref{thm:genrel} for norm relations over~$\FF_p$.
\end{remark}

\begin{theorem}\label{thm:denrelG}
  Let~$\Hsg$ be a set of nontrivial subgroups of~$G$.
  If~$d(\Hsg)>0$ then~$d(\Hsg)$ divides~$|G|^3$.
\end{theorem}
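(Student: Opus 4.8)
The plan is to prove the equivalent assertion that $|G|^3\in I$, where $I:=\langle N_H\mid H\in\Hsg\rangle_{\ZZ[G]}$ is the two‑sided ideal generated by the relevant norms; indeed $\ZZ\cap I=d(\Hsg)\ZZ$, so $|G|^3\in I$ gives $d(\Hsg)\mid|G|^3$. First I would fix a maximal order $\mathcal{M}$ of $\QQ[G]$ containing $\ZZ[G]$ and record the crucial inclusion $|G|\cdot\mathcal{M}\subseteq\ZZ[G]$. This comes from $\ZZ[G]\subseteq\mathcal{M}\subseteq\mathcal{M}^{\vee}\subseteq\ZZ[G]^{\vee}=\tfrac1{|G|}\ZZ[G]$, where $\vee$ denotes the dual lattice for the nondegenerate form $(x,y)\mapsto\operatorname{Tr}_{\QQ[G]/\QQ}(xy)$ (trace of left multiplication): one has $\ZZ[G]^{\vee}=\tfrac1{|G|}\ZZ[G]$ by a direct computation, and $\mathcal{M}\subseteq\mathcal{M}^{\vee}$ because the trace of an integral element is a rational integer.

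Granting this, the heart of the argument is to show that $|G|\in\tilde I:=\langle N_H\mid H\in\Hsg\rangle_{\mathcal{M}}$, the two‑sided ideal taken now inside $\mathcal{M}$. Once this is known we are done quickly: writing $|G|=\sum_k u_kN_{H_k}v_k$ with $u_k,v_k\in\mathcal{M}$ and multiplying by $|G|^2$ gives $|G|^3=\sum_k(|G|u_k)N_{H_k}(|G|v_k)$ with $|G|u_k,|G|v_k\in|G|\mathcal{M}\subseteq\ZZ[G]$, so each summand lies in $\ZZ[G]N_{H_k}\ZZ[G]\subseteq I$, hence $|G|^3\in I$.

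To establish $|G|\in\tilde I$ I would pass to the simple components $\QQ[G]=\prod_i A_i$, so that $\mathcal{M}=\prod_i\mathcal{M}_i$ with $\mathcal{M}_i$ a maximal order of $A_i$ and $\tilde I=\prod_i\tilde I_i$ with $\tilde I_i=\langle e_iN_H\mid H\in\Hsg\rangle_{\mathcal{M}_i}$; it suffices to prove $|G|\mathcal{M}_i\subseteq\tilde I_i$ for each $i$. Since $d(\Hsg)>0$ there is a norm relation over $\QQ$, so by Proposition~\ref{prop:generel} each $i$ admits some $H\in\Hsg$ with $e_iN_H\neq0$; thus $\tilde I_i$ is a nonzero — hence full — two‑sided ideal of $\mathcal{M}_i$ and factors as $\prod_{\mathcal{P}}\mathcal{P}^{\,n_{\mathcal{P}}}$ over the prime ideals of $\mathcal{M}_i$ with $n_{\mathcal{P}}=\min_H v_{\mathcal{P}}(e_iN_H)$, and the task is to bound $n_{\mathcal{P}}\le v_{\mathcal{P}}(|G|)$. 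For this I would use two properties of $c:=e_iN_H$. First, $c=|H|\cdot f$ where $f:=e_i\cdot\tfrac1{|H|}N_H$ is an idempotent of $A_i$, nonzero precisely when $V_i^H\neq0$, so that $c^2=|H|c$; a short local computation in $\mathcal{M}_{i,\mathcal{P}}$ then shows $c\notin p^{\,v_p(|H|)+1}\mathcal{M}_{i,\mathcal{P}}$ — otherwise cancelling powers of $p$ and iterating $c^2=|H|c$ forces $c\in\bigcap_m p^m\mathcal{M}_{i,\mathcal{P}}=0$ — whence $v_{\mathcal{P}}(c)<(v_p(|H|)+1)\,v_{\mathcal{P}}(p)$. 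Secondly, the reduced trace satisfies $\operatorname{trd}_{A_i}(c)=\sum_{h\in H}\chi(h)=|H|\,\langle\Res_H\chi,\triv_H\rangle_H\in\ZZ$ for $\chi$ an irreducible character in the $i$‑th component, and the general estimate $v_{\mathcal{P}}(x)\le v_{\mathcal{P}}(p)\,v_p(\operatorname{trd}_{A_i}(x))$ (valid for $x\in\mathcal{M}_{i,\mathcal{P}}$ with rational‑integral reduced trace) gives $v_{\mathcal{P}}(c)\le v_{\mathcal{P}}(p)\,v_p\!\bigl(|H|\,\langle\Res_H\chi,\triv_H\rangle_H\bigr)$. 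For $\mathcal{P}$ over a prime $p\nmid|G|$ one even gets $n_{\mathcal{P}}=0$ directly, since $|H|$ is then a unit at $\mathcal{P}$ and a nonzero idempotent generates the whole maximal order locally (reduce modulo the radical).

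The step I expect to be the main obstacle is combining these two estimates, over all admissible $H$, into $n_{\mathcal{P}}\le v_{\mathcal{P}}(|G|)$ at primes $\mathcal{P}$ over $p\mid|G|$. The estimates are comfortable whenever some $H\in\Hsg$ with $V_i^H\neq0$ either fails to contain a Sylow $p$‑subgroup of $G$ (so $v_p(|H|)<v_p(|G|)$, and the first bound suffices) or has $p\nmid\langle\Res_H\chi,\triv_H\rangle_H$ (and the second bound suffices). The delicate point is to rule out, using the interpretation of $\langle\Res_H\chi,\triv_H\rangle_H$ as a multiplicity in the permutation module $\QQ[G/H]$ together with the hypothesis $d(\Hsg)>0$, the possibility that every such $H$ fails both conditions — or otherwise to extract the bound $v_{\mathcal{P}}(|G|)$ by a finer local analysis. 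It is exactly this final squeeze that prevents losing an extra factor of $|G|$, i.e.\ that makes a cube rather than a fourth power come out.
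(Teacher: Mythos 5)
Your overall architecture is essentially the paper's: pass to a maximal order $\mathcal{M}\supseteq\Z[G]$, use $|G|\mathcal{M}\subseteq\Z[G]$ twice to turn a relation with coefficients in $\mathcal{M}$ into one with coefficients in $\Z[G]$ (the paper does this prime by prime inside $\Q_p[G]$, which is only a cosmetic difference), and spend the remaining factor of $|G|$ on showing that the two-sided ideal $\tilde I_i$ of $\mathcal{M}_i$ generated by the elements $e_iN_H$ contains $|G|\mathcal{M}_i$. The gap is exactly where you flag it: neither of your two local estimates delivers $n_{\mathcal{P}}\le v_{\mathcal{P}}(|G|)$. Your cancellation argument from $c^2=|H|c$ only manipulates powers of the rational prime $p$, so it gives $v_{\mathcal{P}}(c)\le (v_p(|H|)+1)v_{\mathcal{P}}(p)-1$; when $\mathcal{P}$ is ramified over $p$ and some admissible $H$ with $V_i^H\neq 0$ contains a Sylow $p$-subgroup of $G$, this exceeds $v_{\mathcal{P}}(|G|)$ by up to $v_{\mathcal{P}}(p)-1$ and, as you note, only yields $d(\Hsg)\mid |G|^4$. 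The reduced-trace estimate would need $v_p\bigl(\langle\Res_{G/H}\chi,\triv_H\rangle_H\bigr)\le v_p([G:H])$, for which you give no argument and which is not a general fact about permutation multiplicities. So the proposal as written does not prove the stated bound.

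The paper closes precisely this gap by a sharper local computation. With $\psi\colon\mathcal{M}_i\to\mat_n(\Lambda)$ and $N_i=e_iN_H\neq 0$, put $\psi(N_i)$ in Smith normal form $U\psi(N_i)V=\operatorname{diag}(\lambda_1,\dots,\lambda_k,0,\dots,0)$ with $U,V$ units of $\mat_n(\Lambda)$; the identity $N_i^2=|H|N_i$ becomes $DWD=|H|D$ with $W=V^{-1}U^{-1}$ a unit, and comparing diagonal entries forces $v(\lambda_j)+v(w_{jj})=v(|H|)$, hence $v(\lambda_1)\le v(|H|)$ and $|H|\lambda_1^{-1}\in\Lambda$. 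One then writes down explicit $a_{i,j},b_{i,j}\in\mathcal{M}_i$ (matrix units scaled by $|H|\lambda_1^{-1}$) with $\sum_j\psi(a_{i,j}u_iN_iv_ib_{i,j})=|H|\cdot\mathrm{Id}$. In your language this says $n_{\mathcal{P}}\le v_{\mathcal{P}}(|H|)\le v_{\mathcal{P}}(|G|)$, i.e.\ $|H|\mathcal{M}_i\subseteq\tilde I_i$ --- the ``final squeeze'' you are missing. If you replace your $p$-power cancellation by this elementary-divisor argument (working with the valuation of $\Lambda$ rather than with $p$), the rest of your proof goes through, and in fact gives the refinement recorded in the paper's remark that $|G|^3$ may be replaced by $hg^2$ with $h$ the least common multiple of the $|H|$, $H\in\Hsg$, and $g$ the denominator of a maximal order.
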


\begin{proof}
  The following proof we will use properties of maximal orders in semisimple algebras,
  which can be found in~\cite{Reiner}.
  Assume that~$d(\Hsg)>0$, and let~$p$ be a prime number.
  In the following we will denote by~$\Z_p$ and~$\Q_p$ the ring of $p$-adic
  integers and the field of $p$-adic numbers respectively.
  Let~$\order$ be a maximal order of~$\Q_p[G]$
  containing~$\Z_p[G]$, and
  let~$e_1,\dots,e_r$ be central primitive idempotents of~$\Q_p[G]$ contained
  in~$\order$, which exist since~$\order$ is a maximal
  order~\cite[Theorem~(10.5)~(i)]{Reiner}.

  Let~$1\le i\le r$. By Proposition~\ref{prop:generel}, there exists~$H = H_i\in\Hsg$
  such that~$e_iN_H\neq 0$. Let~$N_i = e_iN_H$, which satisfies~$N_i^2 =
  |H|\cdot N_i$.
  By~\cite[Theorem~(17.3)~(ii)]{Reiner} there is an isomorphism~$\psi\colon
  \order/(1-e_i) \to \mat_{n}(\Lambda)$ where~$\Lambda = \Lambda_i$ is the maximal
  order of a division algebra over~$\Q_p$ and~$n = n_i\ge 1$; let~$v$ be the
  normalized valuation of~$\Lambda$.
  We extend~$\psi$ to~$\order$ via~$\order \to \order/(1-e_i)$.
  Write the Smith normal form (see~\cite[Theorem~(17.7)]{Reiner}) of~$\psi(N_i)$
  as follows:
  let~$U,V\in\mat_n(\Lambda)^\times$
  and~$\lambda_1,\dots,\lambda_n\in \Lambda$ be such that~$U\psi(N_i)V$ is the
  diagonal matrix~$(\lambda_1,\dots,\lambda_n)$ and~$v(\lambda_j)\le
  v(\lambda_{j+1})$ for all~$1\le j<n$.
  Since~$\lambda_1\neq 0$, the relation~$N_i^2 = |H|\cdot N_i$ implies
  that~$v(\lambda_1)\le v(|H|)$.
  Let~$u_i,v_i\in\order$ be such that~$\psi(u_i)=U$ and~$\psi(v_i)=V$.
  For each~$1\le j\le n$, let~$a_{i,j},b_{i,j}\in\order$ be such
  that~$\psi(a_{i,j})\in \mat_n(\Lambda)$ is the matrix with all
  coefficients~$0$ except the~$(j,1)$-th coefficient equal to~$1$
  and~$\psi(b_{i,j})\in \mat_n(\Lambda)$ is the matrix with all
  coefficients~$0$ except the~$(1,j)$-th coefficient equal
  to~$|H|\lambda_1^{-1}\in \Lambda$.
  We obtain that~$\sum_{j=1}^n \psi(a_{i,j}u_i N_i v_i b_{i,j})$
  is~$|H|$ times the~$n\times n$ identity matrix, and therefore
  \[
    \frac{1}{|H_i|}\sum_{j=1}^n e_ia_{i,j}u_i N_i v_i b_{i,j} = e_i.
  \]
  Summing over~$i$, we obtain the norm relation
  \[
    \sum_{i=1}^r\sum_{j=1}^{n_i}e_ia_{i,j}u_i\frac{1}{|H_i|}N_{H_i}v_ib_{i,j} = 1.
  \]
  Since~$e_ia_{i,j}u_i\in\order$, $v_ib_{i,j}\in\order$,
  and~$\order\subseteq\frac{1}{|G|}\Z_p[G]$ (\cite[(27.1) Proposition]{Curtis1990}),
  the denominator of this relation
  divides~$|G|^3$ in~$\Z_p$, so that~$|G|^3\in d(\Hsg)\Z_p$.

  Putting all~$p$ together, we obtain that~$d(\Hsg)$ divides~$|G|^3$ as claimed.
\end{proof}

\begin{remark}
  It is clear from the proof that~$|G|^3$ can be replaced with~$hg^2$, where~$h$
  is the least common multiple of the~$|H|$ for~$H\in\Hsg$ and~$g>0$ is the
  smallest integer such that there is a maximal order~$\order$
  satisfying~$\Z[G]\subset \order \subset \frac{1}{g}\Z[G]$.
\end{remark}

The following example shows that in general the minimal denominators of scalar
and arbitrary norm relations are not equal.

\begin{example}
  Let~$G = A_5$ be the alternating group on $5$ letters, and let~$\Hsg$ be the set of subgroups of $G$ of index at
  most~$12$ (up to conjugacy, these subgroups are $C_5, S_3, D_5, A_4, A_5$). Then~$G$ admits a
  scalar norm relation with respect to~$\Hsg$. However, all scalar norm
  relations with respect to~$\Hsg$ have denominator supported at~$2,3$
  and~$5$, but~$G$ admits a norm relation with respect
  to~$\Hsg$ with denominator supported only at~$2$ and~$5$.
\end{example}

\subsection{Norm relations in finite abelian groups}

In the case of abelian groups, there is a second way to turn Brauer relations
into norm relations and conversely based on duality.
\begin{definition}
  Let~$G$ be a finite abelian group. Let~$\widehat{G} = \Hom(G,\C^\times)$ be
  the dual of~$G$. We have a canonical isomorphism~$G \to
  \widehat{\widehat{G}}$ given by~$g \mapsto (\chi \mapsto \chi(g))$,
  and a noncanonical isomorphism~$G\cong\widehat{G}$.
  Let~$X\subseteq G$ be a subset; we write~$X^\perp = \{\chi\in \widehat{G} \mid
  \chi(x)=1 \text{ for all } x\in X\}\subseteq \widehat{G}$.
\end{definition}

In the following, whenever we are dealing with an abelian group $G$, we will
use the canonical isomorphism with its bidual and its inverse implicitly to
identify subgroups of the dual of $\widehat G$ with subgroups of $G$.
Since the $1$-dimensional characters of~$G$ form a $\C$-basis of the space
of class functions of~$G$, this space is canonically isomorphic
to~$\C[\widehat{G}]$; we will also use this identification implicitly.

\begin{proposition}\label{prop:abeliandual}
  Let~$G$ be a finite abelian group.
  \begin{enumerate}
    \item Let~$H\le G$ be a subgroup. We have~$\Ind_{G/H}(\triv_H) = N_{H^\perp}$.
    \item We have
      \[
        \sum_{H\le G}a_H\cdot \Ind_{G/H}(\triv_H)=0 \quad \text{(Brauer relation of }G)
      \]
      if and only if we have
      \[
        \sum_{H\le \widehat{G}}a_{H^\perp}\cdot N_{H}=0 \quad \text{(norm relation of }\widehat{G}).
      \]
      The second equality is a norm relation if and only if~$a_G\neq 0$.
    \item Let~$\Hsg$ be a set of subgroups of~$G$, and
      let~$\Hsg^\perp = \{H^\perp \colon H\in \Hsg\}$.
      Then the group~$G$ admits a Brauer relation with respect to~$\Hsg$ if
      and only if~$\widehat{G}$ admits a norm relation with respect
      to~$\Hsg^\perp$.
  \end{enumerate}
\end{proposition}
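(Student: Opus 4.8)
The plan is to prove part~(1) by a direct computation in $\C[\widehat G]$, then deduce part~(2) by linearity and the duality between characters and norm elements, and finally obtain part~(3) as a formal consequence of part~(2) together with the bidual identification.

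For part~(1), I would compute the permutation character $\Ind_{G/H}(\triv_H)$ evaluated at an arbitrary element, or rather work directly with its Fourier expansion. Using the identification of the space of class functions on $G$ with $\C[\widehat G]$ (as fixed in the Notations), the class function $\Ind_{G/H}(\triv_H)$ corresponds, via $\Ind_{G/H}(\triv_H) = \sum_{\chi} \langle \Ind_{G/H}(\triv_H), \chi\rangle_G\, \chi$, to the element $\sum_{\chi\in\widehat G} \langle\triv_H,\Res_{G/H}\chi\rangle_H\,\chi$ by Frobenius reciprocity. Now $\langle\triv_H,\Res_{G/H}\chi\rangle_H$ equals $1$ if $\chi|_H$ is trivial, i.e.\ if $\chi\in H^\perp$, and $0$ otherwise. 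Hence $\Ind_{G/H}(\triv_H)$ corresponds to $\sum_{\chi\in H^\perp}\chi = N_{H^\perp}\in\C[\widehat G]$, which is exactly the claimed identity.

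For part~(2), apply part~(1) to each term: a relation $\sum_{H\le G} a_H\Ind_{G/H}(\triv_H)=0$ as class functions on $G$ is, under the identification with $\C[\widehat G]$, the same as $\sum_{H\le G}a_H N_{H^\perp}=0$ in $\C[\widehat G]$. Reindexing the sum over subgroups $H'=H^\perp$ of $\widehat G$ (the map $H\mapsto H^\perp$ is a bijection between subgroups of $G$ and subgroups of $\widehat G$), this reads $\sum_{H'\le\widehat G}a_{(H')^\perp}N_{H'}=0$, where here $(H')^\perp\subseteq\widehat{\widehat G}$ is identified with a subgroup of $G$ via the canonical bidual isomorphism. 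This is a \emph{scalar} norm relation of $\widehat G$ precisely when the coefficient of the trivial subgroup $H'=1$ of $\widehat G$ is nonzero; since $1^\perp = G$, that coefficient is $a_G$, so it is a norm relation iff $a_G\neq 0$. One should also record that the argument is reversible, since every step is an equivalence.

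For part~(3), suppose $G$ admits a Brauer relation with respect to $\Hsg$, say $\sum_{H\in\Hsg}a_H\Ind_{G/H}(\triv_H)=0$. By part~(2) this yields $\sum_{H\in\Hsg}a_{(H^\perp)^\perp}N_{H^\perp}=0$ in $\C[\widehat G]$, a relation supported on $\Hsg^\perp$; provided the coefficient of the trivial subgroup is nonzero this is a norm relation of $\widehat G$ with respect to $\Hsg^\perp$. The only subtlety is ensuring that $\Hsg$ can be taken to include the trivial subgroup $1$ with $a_1\neq 0$ (so that $\Hsg^\perp\ni 1^\perp=G$... ), and symmetrically; but this is built into the convention that a Brauer relation with respect to $\Hsg$ and a norm relation with respect to $\Hsg^\perp$ are related exactly when the relevant endpoint coefficients do not vanish, which is how ``useful'' was defined. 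The converse direction is identical upon applying duality to $\widehat G$ and using the canonical isomorphism $\widehat{\widehat G}\cong G$. I expect the main (minor) obstacle to be purely bookkeeping: keeping the three layers $G$, $\widehat G$, $\widehat{\widehat G}$ straight and checking that $\perp$ is an inclusion-reversing bijection compatible with the bidual identification, so that double-perp returns to the original subgroup; none of this is deep, but it must be stated carefully.
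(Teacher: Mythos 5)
Your argument for part~(1) --- expanding $\Ind_{G/H}(\triv_H)$ in irreducible characters and computing the multiplicities $\langle \triv_H,\Res_{G/H}(\chi)\rangle_H$ via Frobenius reciprocity --- is exactly the paper's proof, and the paper then simply asserts that parts~(2) and~(3) ``follow trivially,'' which is precisely the reindexing and $\perp$-bookkeeping you spell out (including the key observation that the coefficient of the trivial subgroup of $\widehat{G}$ is $a_{1^\perp}=a_G$). The proposal is correct and takes essentially the same route as the paper.
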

\begin{proof}
  Let~$\chi\in\widehat{G}$. By Frobenius reciprocity we have
  \[
    \langle \Ind_{G/H}(\triv_H), \chi\rangle_G = \langle \triv_H,
    \Res_{G/H}(\chi)\rangle_H.
  \]
  This inner product equals~$1$ if~$\Res_{G/H}(\chi)=\triv_H$, i.e. if~$\chi\in
  H^\perp$, and~$0$ otherwise, proving the first assertion.
  The next two follow trivially.
\end{proof}

\begin{remark}
  Obviously, we have the corresponding dual statements as follows.
  \begin{enumerate}
    \item Let~$H\le G$ be a subgroup.
      We have~$N_{H} = \Ind_{\widehat{G}/H^\perp}(\triv_{H^\perp})$.
    \item We have
      \[
        \sum_{H\le G}a_H\cdot N_{H}=0 \quad \text{ (norm relation of }G)
      \]
      if and only if we have
      \[
        \sum_{H\le \widehat{G}}a_{H^\perp}\cdot \Ind_{\widehat{G}/H}(\triv_{H})=0 \quad
        \text{(Brauer relation of }\widehat{G}).
      \]
      The Brauer relation is useful if and only if~$a_G\neq 0$.
    \item The group~$G$ admits a norm relation with respect to~$\Hsg$ if
      and only if~$\widehat{G}$ admits a Brauer relation with respect
      to~$\Hsg^\perp$.
  \end{enumerate}
\end{remark}

\begin{proposition}\label{prop:funakurarel}
  Let~$\mu$ denote the M\"obius function. For~$n>1$ an integer, let~$\rad(n) =
  \prod_{p\mid n}p$, where the product ranges over prime divisors~$p$ of~$n$.

  Let~$G$ be a non-cyclic abelian group.
  \begin{enumerate}
  \item We have the norm relation~$\rela_G$:
    \[
      1 = \sum_{C = \langle\chi\rangle\le \widehat{G} \text{ cyclic}} a_{\ker\chi} N_{\ker \chi},
    \]
    where
    \[
      a_{\ker\chi} = \frac{1}{|\ker \chi|}\sum_{C\le C'\le \widehat{G}\text{
        cyclic}}\mu([C' : C]).
    \]
  \item\label{item:altakerchi} We have
    \[
      a_{\ker\chi}
      = \frac{c}{|G|}\prod_{p\mid c} \left(1-p^{r_p-1}\delta_{\chi,p}\right)
      \prod_{p\mid|G|,\, p\nmid c} \left(-p-p^2-\dots-p^{r_p-1}\right)
    \]
      where~$c$ denotes the order of~$\chi$ and in each product~$p$ ranges over prime divisors,
      where~$\delta_{\chi,p}=1$ or~$0$ according as
      whether there exists~$\chi'\in\widehat{G}$ such that~$(\chi')^p=\chi$, and
      where~$r_p = \dim_{\F_p}(G/G^p)$ denotes the~$p$-rank of~$G$.

  \item\label{item:funakuraden} The denominator of~$\rela_G$ divides
    $
      \frac{|G|}{\rad(|G|)},
    $
    with equality if~$G$ is a $p$-group.
  \end{enumerate}
\end{proposition}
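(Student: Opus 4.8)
Throughout, for a cyclic subgroup $C\le\widehat G$ set $b_C=\sum_{C\le C'\le\widehat G\text{ cyclic}}\mu([C':C])\in\Z$; since $|\ker\chi|=|G|/[G:\ker\chi]=|G|/\operatorname{ord}(\chi)$, writing $C=\langle\chi\rangle$ and $c=|C|=\operatorname{ord}(\chi)$ we have $a_{\ker\chi}=b_C/|\ker\chi|=(c/|G|)\,b_C$. The plan is to prove the three parts in order, reducing everything to combinatorics of cyclic subgroups of $\widehat G$ via abelian duality.

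For Part~(1), I would rephrase the asserted identity as an equality of class functions on $\widehat G$. Since $G$ is abelian, the $\C$-algebra map $\C[G]\to\C[\widehat{\widehat G}]\cong\{\text{class functions on }\widehat G\}$ sending $g$ to $\psi\mapsto\psi(g)$ is an isomorphism, so it suffices to evaluate both sides at every $\psi\in\widehat G$. Under this map $N_{\ker\chi}=\sum_{h\in\ker\chi}h$ becomes the function equal to $|\ker\chi|$ on $(\ker\chi)^\perp=\langle\chi\rangle$ and $0$ elsewhere, and $1\in\Q[G]$ becomes the constant function $1$; since $a_{\ker\chi}|\ker\chi|=b_C$, what remains to show is $\sum_{C\le\widehat G\text{ cyclic},\,\psi\in C}b_C=1$ for every $\psi$. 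I would prove this by interchanging the order of summation: summing first over cyclic $C'\ni\psi$ and then over cyclic $C$ with $\psi\in C\le C'$, the subgroups of a fixed cyclic $C'$ of order $n$ that contain $\psi$ are precisely those of order $d$ with $m\mid d\mid n$ where $m=\operatorname{ord}(\psi)$, so the contribution of $C'$ is $\sum_{m\mid d\mid n}\mu(n/d)=\sum_{f\mid n/m}\mu(f)$, which equals $1$ if $n=m$ and $0$ otherwise; hence only $C'=\langle\psi\rangle$ survives and the sum is $1$. (Alternatively, the same identity can be extracted from Proposition~\ref{prop:abeliandual} and the classical Artin-type expansion of the trivial character, but the Möbius computation is self-contained.)

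For Part~(2), it remains to evaluate $b_C$. Using the primary decomposition $\widehat G=\prod_p\widehat G_p$, cyclic subgroups, inclusions and indices all factor over the primes, and multiplicativity of $\mu$ gives $b_C=\prod_p b^{(p)}_{C_p}$, where for a cyclic $D\le\widehat G_p$ we set $b^{(p)}_D=\sum_{D\le D'\le\widehat G_p\text{ cyclic}}\mu([D':D])$. Since $\mu([D':D])\ne0$ forces $[D':D]\in\{1,p\}$, we have $b^{(p)}_D=1-M(D)$, where $M(D)$ is the number of cyclic $D'\le P:=\widehat G_p$ with $D\le D'$ and $[D':D]=p$. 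The key point is the count of $M(D)$ when $P$ has $p$-rank $r$ and $|D|=p^a$: for $a=0$ this is the number of lines in $P[p]\cong\F_p^r$, namely $1+p+\dots+p^{r-1}$; for $a\ge1$, a cyclic $D'$ of order $p^{a+1}$ with $D\le D'$ has $D$ as its unique subgroup of order $p^a$, hence $D'=\langle y\rangle$ with $\langle y^p\rangle=D$, which forces $\operatorname{ord}(y)=p^{a+1}$, and such $y$ exist iff some (equivalently, every) generator of $D$ is a $p$-th power in $P$, in which case the fibre of $y\mapsto y^p$ over each of the $\varphi(p^a)$ generators of $D$ is a coset of $P[p]$ of size $p^r$ while each $D'$ accounts for $\varphi(p^{a+1})$ such $y$, so $M(D)=\varphi(p^a)p^r/\varphi(p^{a+1})=p^{r-1}$; and $M(D)=0$ otherwise. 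Since $\delta_{\chi,p}=1$ precisely when the generator $\chi_p$ of $C_p$ is a $p$-th power in $\widehat G_p$ (the prime-to-$p$ component of $(\chi')^p=\chi$ always being solvable), this gives $b^{(p)}_{C_p}=1-p^{r_p-1}\delta_{\chi,p}$ when $p\mid c$, $b^{(p)}_{C_p}=-(p+\dots+p^{r_p-1})$ when $p\mid|G|$ and $p\nmid c$ (read as the empty sum $0$ when $r_p=1$), and $b^{(p)}_{C_p}=1$ when $p\nmid|G|$; multiplying these and inserting $c/|G|$ yields the stated formula. I expect this count of cyclic subgroups to be the main obstacle of the whole proposition.

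For Part~(3), since $a_{\ker\chi}=c\,b_C/|G|$ with $c,b_C,|G|\in\Z$, the $p$-adic valuation of the denominator of $a_{\ker\chi}$ is $\max(0,v_p(|G|)-v_p(c)-v_p(b_C))$. Fix $p\mid|G|$: if $p\mid c$ then $v_p(c)\ge1$, and if $p\nmid c$ then by Part~(2) the factor $b^{(p)}_{C_p}=-(p+\dots+p^{r_p-1})$ is divisible by $p$ when $r_p\ge2$ and is $0$ when $r_p=1$ (so $a_{\ker\chi}=0$); in every case the valuation of the denominator of $a_{\ker\chi}$ is at most $v_p(|G|)-1$. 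Hence the denominator of $\rela_G$, being the least common multiple of those of the $a_{\ker\chi}$, divides $\prod_{p\mid|G|}p^{v_p(|G|)-1}=|G|/\rad(|G|)$. If $G$ is a non-cyclic $p$-group, taking $\chi=\triv$ (so $c=1$, $C=\{1\}$, $N_{\ker\chi}=N_G$) gives $b_{\{1\}}=-(p+\dots+p^{r-1})$ with $r=r_p\ge2$, hence $v_p(b_{\{1\}})=1$ and the denominator of $a_G$ equals $p^{v_p(|G|)-1}=|G|/\rad(|G|)$; combined with the divisibility just shown, this forces equality.
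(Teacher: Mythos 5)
Your proof is correct. Parts (2) and (3) follow essentially the same route as the paper: reduce $b_C=\sum_{C\le C'}\mu([C':C])$ to the $p$-primary components, observe that only index $1$ or $p$ contributes, and count $p$-th roots in $\widehat G_p$ (the paper counts generators $\chi'$ with $(\chi')^p=\chi$ and weights by $\varphi(c)/\varphi(pc)$, whereas you count the cyclic overgroups $D'$ directly and divide by $\varphi(p^{a+1})$ --- the same computation in different clothing); the valuation analysis in (3), including the sharpness for $p$-groups via $\chi=\triv$, matches the paper's. The genuine difference is Part (1): the paper simply cites Funakura's Corollary 6 applied to $\widehat G$ and dualizes via Proposition~\ref{prop:abeliandual}, whereas you give a self-contained argument, transporting the identity through the Fourier isomorphism $\C[G]\cong\C^{\widehat G}$ (under which $N_{\ker\chi}$ becomes $|\ker\chi|$ times the indicator of $C=(\ker\chi)^\perp$) and verifying $\sum_{C\ni\psi}b_C=1$ by interchanging the order of summation and using $\sum_{f\mid k}\mu(f)=[k=1]$. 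This buys independence from the reference at the cost of a page of Möbius bookkeeping; it also makes transparent why the coefficients are exactly a Möbius inversion over the lattice of cyclic subgroups. Two small points worth making explicit if you write this up: the relation is a \emph{norm} relation (i.e.\ no $\ker\chi$ is trivial) precisely because $G$ is non-cyclic, and in Part (3) the case $r_p=1$, $p\nmid c$ gives $a_{\ker\chi}=0$ rather than a unit $p$-adic valuation, which you do handle correctly.
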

\begin{proof}
  The first assertion is~\cite[Corollary 6]{Funakura1978}, applied to the group~$\widehat{G}$ and
  dualized using Proposition~\ref{prop:abeliandual}.

  In order to prove~(\ref{item:altakerchi}), we rewrite the expression for~$a_{\ker\chi}$ as follows.
  \[
    \frac{1}{|\ker \chi|}\sum_{C\le C'\le \widehat{G}\text{
      cyclic}}\mu([C' : C])
    = \frac{c}{|G|}\sum_{d\ge 1}\mu(d)|\{C\le C'\le\widehat{G} \mid [C':C]=d\}|.
  \]
  Every~$C'$ that appears in this sum is generated by an element~$\chi'$ of
  order~$cd$ such that~$(\chi')^d=\chi$. Moreover, the set of~$\chi''\in\widehat{G}$
  that generate the same cyclic group as~$\chi'$ and
  satisfy~$(\chi'')^d=\chi$ is exactly the set of~$\chi'' = (\chi')^k$
  where~$k\in(\Z/cd\Z)^\times$ is such that~$k\equiv 1\bmod{c}$: there are
  exactly~$\varphi(cd)/\varphi(c)$ such elements.
  We therefore obtain
  \[
    a_{\ker\chi}
    = \frac{c}{|G|}\sum_{d\mid |G|}
    \mu(d)\frac{\varphi(c)}{\varphi(cd)}|\{\chi'\in\widehat{G} \mid (\chi')^d = \chi
    \text{ and }\chi'\text{ has order }cd\}|.
  \]
  This expression is multiplicative with respect to the decomposition of~$G$
  into a product of~$p$-Sylow subgroups, so we may assume that~$G$ is a
  nontrivial $p$-group, in
  which case~$p\mid c$ if and only if~$\chi\neq 1$. Each sum then has exactly
  two nonzero summands corresponding
  to~$d=1$ and~$d=p$, and the~$d=1$ term in the sum is~$1$, so it suffices to
  evaluate the summand~$d=p$.
  If~$\chi\neq 1$, then every~$\chi'$ such that~$(\chi')^p=\chi$ has order~$pc$,
  and the number of such elements is~$|\widehat{G}[p]|\delta_{\chi,p} =
  p^{r_p}\delta_{\chi,p}$; moreover~$\varphi(c)/\varphi(pc) = 1/p$.
  If~$\chi=1$, then~$c=1$ and every~$\chi'$ that has order~$p$ satisfies~$(\chi')^p=\chi$,
  and the number of such elements is~$|\widehat{G}[p]|-1 = p^{r_p}-1$;
  moreover~$\varphi(c)/\varphi(pc) = 1/(p-1)$; finally we have
  \[
    1 - \frac{p^{r_p}-1}{p-1} = -p -p^2 - \dots - p^{r_p-1},
  \]
  completing the proof of~(\ref{item:altakerchi}).

  Let~$p$ be a prime divisor of~$|G|$ and~$\chi\in\widehat{G}$. By inspection, we see that the valuation
  of~$a_{\ker\chi}$ satisfies~$v_p(a_{\ker\chi})\ge 1-v_p(|G|)$ if~$p\nmid c$,
  and~$v_p(a_{\ker\chi}) \ge v_p(c)-v_p(|G|)\ge 1-v_p(|G|)$ if~$p\mid c$.
  In particular, we always have~$v_p(a_{\ker\chi})\ge 1-v_p(|G|)$, and if~$G$ is
  a $p$-group then there is equality for~$\chi=1$; this proves the claim about
  the denominator of~$\rela_G$.
\end{proof}

\begin{example}
  Let~$G = C_{18}\times C_2$. Then the denominator of~$\rela_G$ is~$2$,
  but we have~$\tfrac{|G|}{\rad(|G|)} = 6$. This shows that equality in~(\ref{item:funakuraden})
  does not always hold.
\end{example}

We can leverage the previous proposition to obtain optimal relations with
respect to the denominator and the index of the subgroups involved in the case
of abelian groups.

\begin{theorem}\label{thm:optimalabelianrels}
  Let~$G$ be a finite abelian group, and write~$G\cong C\times Q$ where~$C$ is
  the largest cyclic factor of~$G$.
  \begin{enumerate}
    \item\label{item:denom1} Denominator~$1$ case.
      \begin{enumerate}
        \item\label{item:denom1cond} The group~$G$ admits a denominator~$1$ norm relation if and only if~$|Q|$
          is divisible by at least two distinct primes.
        \item\label{item:denom1index}
          Assume that~$G$ admits a denominator~$1$ norm relation.
          The smallest~$n\ge 1$ such that~$G$ admits a denominator~$1$
          norm relation with respect to the set of subgroups of index at most~$n$ is
          \[
            n_0 = |C|\cdot \max_p |Q_p|,
          \]
          where~$p$ ranges over all prime numbers.
        \item\label{item:denom1rel}
          Assume that~$G$ admits a denominator~$1$ norm relation.
          Let~$\Hsg$ be the union over the prime divisors~$p$ of~$|G|$
          of the set of subgroups~$H$
          of $G_{p'}$ such that~$G_{p'}/H$ is cyclic. Every subgroup
          in~$\Hsg$ has index at most~$n_0$.
          For each prime number~$p$ dividing~$|G|$, let~$d_p$ be the
          denominator of~$\rela_{G_{p'}}$, and let~$1 = \sum_p u_pd_p$ be a
          B\'ezout identity for the~$d_p$. Then
          \[
            \sum_p u_p d_p\rela_{G_{p'}}
          \]
        is a denominator~$1$ scalar norm relation with respect to~$\Hsg$.
      \end{enumerate}
    \item\label{item:denompp} Prime power denominator case. Assume that~$Q$ is a~$p$-group.
      \begin{enumerate}
        \item\label{item:denomppcond} The group~$G$ admits a norm relation if and only if~$Q\neq 1$.
        \item\label{item:denomppindex}
          Assume that~$G$ admits a norm relation.
          The smallest~$n\ge 1$ such that~$G$
          admits a norm relation with respect to
          the set of subgroups of index at most~$n$ is~$n_0 = |C|$.
        \item\label{item:denompprel}
          Assume that~$G$ admits a norm relation.
          Let~$\Hsg$ be the set of subgroups~$H$ of~$G$ such that~$G/H$
          is cyclic. Every subgroup in~$\Hsg$ has index at most~$n_0$.
          Then~$\rela_{G_p}$ is a scalar norm relation with respect
          to~$\Hsg$ and with denominator~$|G_p|/p$.
      \end{enumerate}
  \end{enumerate}
\end{theorem}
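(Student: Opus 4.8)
The plan is to reduce each assertion to an elementary statement about the characters of $G$. Since $G$ is abelian, the simple $\CC[G]$-modules are the one-dimensional spaces $V_\chi$ attached to the characters $\chi\in\widehat G$, and $V_\chi^H\neq0$ exactly when $H\le\ker\chi$, i.e. when $\chi\in H^\perp$; so by the criterion \ref{item:repsC} of Proposition~\ref{prop:generel}, $G$ admits a norm relation with respect to $\Hsg$ if and only if $\widehat G=\bigcup_{H\in\Hsg}H^\perp$. Similarly the simple $\overline\FF_p[G]$-modules correspond to the characters $\chi\colon G\to\overline\FF_p^\times$; these factor through the maximal prime-to-$p$ quotient, hence are parametrized by $\widehat{G_{p'}}$, and $N_H$ acts on the module attached to $\chi$ by the scalar $\sum_{h\in H}\chi(h)$, which is nonzero iff $\chi|_H=\triv$ and $p\nmid|H|$; since a subgroup of $G=G_p\times G_{p'}$ has order prime to $p$ iff it lies in $G_{p'}$, the criterion \ref{item:Fpbarrelreps} of Proposition~\ref{prop:dencoprime} says that $p\nmid d(\Hsg)$ iff every $\chi\in\widehat{G_{p'}}$ is trivial on some $H\in\Hsg$ with $H\le G_{p'}$. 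I will also use the bookkeeping facts $\exp(G)=|C|$, $\exp(G_{p'})=|C|_{p'}$ (the prime-to-$p$ part of $|C|$), $|G_p|=|C|_p\cdot|Q_p|$, that $[G:H]$ divides $\exp(G)$ whenever $G/H$ is cyclic, and that a norm relation of a direct factor $B$ of an abelian group $A=B\times B'$, living in $\QQ[B]$, transfers via $\QQ[A]\cong\QQ[B]\otimes\QQ[B']$ (tensoring by $1$) to a norm relation of $A$ with the same subgroups and the same denominator.

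For (1)(a): the existence of a denominator-$1$ relation is equivalent to $d(\Hsg)=1$ for $\Hsg$ the set of all nontrivial subgroups, hence (and $d(\Hsg)=1$ precisely when no prime divides $d(\Hsg)$) to $p\nmid d(\Hsg)$ for every prime $p$; by the reduction above this holds iff for every prime $p$ every $\chi\in\widehat{G_{p'}}$ has $\ker\chi\neq1$, i.e. iff $G_{p'}$ is noncyclic for every $p$. An elementary Sylow-decomposition argument then gives ``$G_{p'}$ noncyclic for all $p$'' $\iff$ ``$|Q|$ has at least two distinct prime divisors'': if $q_1\neq q_2$ both divide $|Q|$, then for any $p$ one of them, say $q_1$, is $\neq p$, and $q_1$ divides the two largest invariant factors of $G$, so the $q_1$-Sylow of $G$ has rank $\ge2$ and lies in $G_{p'}$, while conversely if $Q$ is trivial or a $q$-group then $G_{q'}$ is cyclic. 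For (1)(c), this hypothesis makes $G_{p'}$ noncyclic for all $p\mid|G|$, so Proposition~\ref{prop:funakurarel} supplies $\rela_{G_{p'}}$, a norm relation of $G_{p'}$ supported on the subgroups $H\le G_{p'}$ with $G_{p'}/H$ cyclic and with denominator $d_p\mid|G_{p'}|/\rad(|G_{p'}|)$; transferred to $G$ it is a norm relation of $G$ supported on $\Hsg$, and the estimate $[G:H]=|G_p|\cdot|G_{p'}/H|\le|C|_p|Q_p|\cdot|C|_{p'}=|C|\,|Q_p|\le n_0$ shows $\Hsg\subseteq\{H:[G:H]\le n_0\}$. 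Since $p\nmid d_p$, the $d_p$ ($p\mid|G|$) have greatest common divisor $1$, so a B\'ezout identity $1=\sum_p u_pd_p$ exists, and $\sum_p u_pd_p\rela_{G_{p'}}$ has integer coefficients (as each $d_p\rela_{G_{p'}}$ does) and equals $1$: this is the asserted denominator-$1$ scalar norm relation with respect to $\Hsg$.

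For (1)(b): the upper bound is (1)(c). For the lower bound, choose $p_0$ with $|Q_{p_0}|=\max_p|Q_p|$, so $n_0=|C|\,|Q_{p_0}|$; if $G$ admits a denominator-$1$ relation with respect to $\Hsg$ then $p_0\nmid d(\Hsg)$, so the reduction applied to a character $\chi_0\in\widehat{G_{p_0'}}$ of maximal order $\exp(G_{p_0'})=|C|_{p_0'}$ gives $H\in\Hsg$ with $H\le G_{p_0'}$ and $\chi_0|_H=\triv$; then $[G_{p_0'}:H]\ge[G_{p_0'}:\ker\chi_0]=\operatorname{ord}(\chi_0)=|C|_{p_0'}$, whence $[G:H]=|G_{p_0}|\cdot[G_{p_0'}:H]\ge|C|_{p_0}|Q_{p_0}|\cdot|C|_{p_0'}=n_0$, so no such relation uses only subgroups of index $<n_0$. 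For (2)(a): by the first reduction $G$ admits a norm relation iff $\widehat G=\bigcup_{1\neq H\le G}H^\perp$, i.e. iff $\widehat G$ is a union of proper subgroups (every subgroup of $\widehat G$ has the form $H^\perp$, and $H\neq1\iff H^\perp\neq\widehat G$), i.e. iff $\widehat G$ — equivalently $G$ — is noncyclic, i.e. iff $Q\neq1$. For (2)(c): then $Q\neq1$ and $G_p=C_p\times Q$ is a product of two nontrivial $p$-groups, hence noncyclic; Proposition~\ref{prop:funakurarel} gives $\rela_{G_p}$ with denominator $|G_p|/\rad(|G_p|)=|G_p|/p$, supported on the $H\le G_p$ with $G_p/H$ cyclic, and transferring to $G$, for such $H$ we have $G/H\cong(G_p/H)\times G_{p'}$ with $G_p/H$ a cyclic $p$-group and $G_{p'}$ cyclic of order prime to $p$ (here one uses that $Q$ is a $p$-group, so $G_{p'}$ is the prime-to-$p$ part of the cyclic group $C$), hence $G/H$ is cyclic and $H\in\Hsg$; moreover every $H\in\Hsg$ has $[G:H]\mid\exp(G)=|C|=n_0$. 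Finally (2)(b): the upper bound is (2)(c), and for the lower bound a character $\chi_0\in\widehat G$ of order $\exp(G)=|C|$ lies in some $H^\perp$, so $[G:H]=|H^\perp|\ge|C|$.

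The delicate but routine parts are the index bookkeeping between $|G_p|$, $\exp(G_{p'})$, $|C|$ and $|Q_p|$, together with the transfer of Funakura's relations $\rela_{G_{p'}}$ and $\rela_{G_p}$ into $\QQ[G]$, which rests on the observation that the group algebra of a direct factor sits inside $\QQ[G]$ as a unital subalgebra (and this does not change the denominator). The genuine point — where a mere cardinality count would fail — is the lower bound in (1)(b): the denominator-coprimality criterion, tested on a character of maximal order, forces the use of a subgroup of $G_{p_0'}$ whose index in $G$ is at least $|G_{p_0}|\exp(G_{p_0'})=n_0$; the corresponding, easier, argument drives (2)(b). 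I also need to keep the degenerate cases in mind ($G$ cyclic, in particular $G$ trivial), which cause no trouble because such $G$ admit no norm relation at all.
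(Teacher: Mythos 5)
Your proof is correct and follows essentially the same route as the paper's: the lower bounds in (1)(b) and (2)(b) come from testing the criteria of Propositions~\ref{prop:generel} and~\ref{prop:dencoprime} against a character of maximal order (the paper packages this as a single ``claim'' about simple $\overline{\F}_q[G]$-modules, which is your duality argument in different language), and the constructions come from Proposition~\ref{prop:funakurarel} applied to $G_{p'}$ resp.\ $G_p$ and transferred into $\Q[G]$, combined via a B\'ezout identity. The only cosmetic difference is that for the lower bound in (2)(b) you invoke the characteristic-zero criterion directly, whereas the paper applies its mod-$q$ claim to an auxiliary prime $q$ not dividing the denominator or $|G|$; both arguments are sound.
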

\begin{proof}
  Let~$\Hsg$ be a set of subgroups of~$G$.
  Let~$q$ be a prime, and assume that~$G$ admits a norm relation
  with respect to~$\Hsg$ and
  with denominator not divisible by~$q$.
  We claim that~$\Hsg$ contains a subgroup of~$G$ of index at
  least~$|C|\cdot |Q_q|$.
  To prove the claim, choose an isomorphism~$G\cong G_q\times G_{q'}$,
  let~$\chi\colon G \to \overline{\F}_q^\times$ be a
  one-dimensional character of maximal order, and let~$V$ be the
  corresponding~$\overline{\F}_q[G]$-module. Clearly~$G_q\subseteq \ker\chi$
  and~$\chi$ has order~$|C_{q'}|$.
  By Proposition~\ref{prop:dencoprime} (\ref{item:Fpbarrelreps}), there
  exists~$H\in\Hsg$ such that~$N_H\cdot V \neq 0$.
  Since~$H_q = H\cap G_q$ acts trivially on~$V$ and~$q\cdot V=0$, by
  Remark~\ref{rem:defrels}~(\ref{item:replace}) we have~$H_q=1$, and in
  particular the index of~$H$ in~$G$ is divisible by~$|G_q|$.
  Since~$|H|$ is not divisible by~$q$, we have
  \[
    N_H\cdot V \neq 0
    \Longleftrightarrow \frac{1}{|H|}N_H\cdot V \neq 0
    \Longleftrightarrow V^H \neq 0
    \Longleftrightarrow H \subseteq \ker\chi.
  \]
  In particular, the index of~$H$ in~$G$ is divisible by the order~$|C_{q'}|$ of~$\chi$.
  Since~$|G_q|$ and~$|C_{q'}|$ are coprime, the index of~$H$ is therefore
  divisible by~$|C_{q'}|\cdot |G_q| = |C| \cdot |Q_q|$ as claimed.

  Applying the claim to~$q=p$ for each prime divisor~$p$ of~$|G|$ proves that
  in~(\ref{item:denom1index}) the integer~$n_0$ is indeed a lower bound,
  and that in~(\ref{item:denom1cond}) the ``only if'' direction holds.

  Let~$p$ be a prime number dividing~$|G|$ and let~$H\le G_{p'}$ be a subgroup such that~$G_{p'}/H$ is
  cyclic. Then~$|G_{p'}/H|$ divides~$|C_{p'}|$, so the index of~$H$ in~$G$
  divides~$|C_{p'}|\cdot|G_p| = |C|\cdot |Q_p| \le n_0$ as claimed
  in~(\ref{item:denom1rel}). The existence of the B\'ezout identity follows from
  the fact that by Proposition~\ref{prop:funakurarel} (\ref{item:funakuraden}),
  for each prime number~$p$ dividing~$|G|$ the denominator~$d_p$ is not divisible by~$p$, and
  all~$d_p$ are divisors of~$|G|$. This proves~(\ref{item:denom1rel}), and
  therefore completes~(\ref{item:denom1cond}) and~(\ref{item:denom1index}).

  Now assume that~$Q$ is a~$p$-group.
  If~$G$ admits a norm relation, then applying the above claim to a
  prime~$q$ that does not divide the denominator of the norm relation or~$|G|$ proves
  that in~(\ref{item:denomppindex}) the integer~$n_0$ is indeed a lower bound,
  and that in~(\ref{item:denomppcond}) the ``only if'' direction holds.

  Let~$H\le G$ be a subgroup such that~$G/H$ is cyclic. Then~$|G/H|$
  divides~$|C| = n_0$ as claimed in~(\ref{item:denompprel}).
  The rest of~(\ref{item:denompprel}) is contained in
  Proposition~\ref{prop:funakurarel}, and therefore
  completes~(\ref{item:denomppcond}) and~(\ref{item:denomppindex}).
\end{proof}

\section{Arithmetic applications}\label{sec:applications}

Let $K/F$ be a normal extension of algebraic number fields with Galois group $G$.
In this section we will discuss the consequences of the existence of
norm relations of~$G$, scalar or not,
for the structure and arithmetic properties of $K$.

In this section, we will consider either a scalar norm relation of the form
\begin{align}\label{eq:normrel}
  d = \sum_{i=1}^\ell a_i N_{H_i}\tag{$\star$}
\end{align}
with $H_i \leq G$, $d\in\Z_{>0}$ and~$a_i \in \Z$, or a norm relation
\begin{align}\label{eq:normrelgen}
  d = \sum_{i=1}^\ell a_i N_{H_i} b_i\tag{$\star\star$}
\end{align}
with $H_i \leq G$, $d \in \Z_{>0}$, $a_i,b_i \in \Z[G]$.

We begin by describing a general statement that holds for arbitrary $\Z[G]$-modules.
Let~$M$ be a $\Z[G]$-module and~$H \leq G$ a subgroup; the action of the norm
$N_H$ induces a map $M \to M^H$, which we also denote by $N_H$.

\begin{proposition}\label{prop:meta}
  Let~$M$ be a $\Z[G]$-module.
  \begin{enumerate}
    \item
      If $G$ admits a scalar norm relation of the form~(\ref{eq:normrel}), then
      the exponent of the quotient $M/\sum_{i=1}^\ell M^{H_i}$ is finite and divides $d$.
    \item
      If $G$ admits a norm relation of the form~(\ref{eq:normrelgen}), then
      the exponent of the quotient $M/\sum_{i=1}^\ell a_i M^{H_i}$ is finite and divides $d$.
  \end{enumerate}
\end{proposition}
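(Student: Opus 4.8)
The plan is simply to evaluate the relation on an arbitrary element of $M$. The only structural fact I will need is one of the basic properties of the norm element recalled at the beginning of Section~\ref{sec:normrels}: for every $\Z[G]$-module $M$, every subgroup $H\le G$, and every $x\in M$, one has $N_H x\in M^H$.

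For part~(2), fix $m\in M$. Letting both sides of the norm relation~(\ref{eq:normrelgen}) act on $m$ (here $d\in\Z_{>0}\subseteq\Z[G]$ acts by multiplication by the integer $d$), I get
\[
  d\cdot m = \sum_{i=1}^\ell a_i N_{H_i} b_i m.
\]
For each $i$ we have $N_{H_i}(b_i m)\in M^{H_i}$, hence $a_i N_{H_i} b_i m\in a_i M^{H_i}$; note that $a_i M^{H_i}$ is a subgroup of $M$, being the image of the additive homomorphism $M^{H_i}\to M$, $x\mapsto a_i x$. Therefore $d\cdot m\in\sum_{i=1}^\ell a_i M^{H_i}$. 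Since $m$ was arbitrary, $d$ annihilates $M/\sum_{i=1}^\ell a_i M^{H_i}$, so the exponent of this quotient is finite and divides $d$.

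For part~(1), the same computation applied to the scalar relation~(\ref{eq:normrel}) gives $d\cdot m=\sum_{i=1}^\ell a_i N_{H_i} m$; since each $a_i\in\Z$ and $M^{H_i}$ is a $\Z$-submodule of $M$, we have $a_i N_{H_i} m\in M^{H_i}$, whence $d\cdot m\in\sum_{i=1}^\ell M^{H_i}$ and the conclusion follows as before. Alternatively, (1) can be read off from (2): viewing~(\ref{eq:normrel}) as an instance of~(\ref{eq:normrelgen}) with $b_i=1$, one has $a_i M^{H_i}\subseteq M^{H_i}$ because $a_i$ is a scalar, so $M/\sum_i M^{H_i}$ is a quotient of $M/\sum_i a_i M^{H_i}$ and its exponent divides $d$ a fortiori.

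There is essentially no obstacle here: the statement is a direct unwinding of the definition of a norm relation together with the defining property of the norm element, and the argument is the module-theoretic shadow of the fact that a norm relation expresses $d$ inside the two-sided ideal generated by the $N_{H_i}$. The only point requiring (minimal) care is verifying that the sets $a_i M^{H_i}$ are genuine subgroups, so that the displayed quotients are well defined; this is immediate from additivity of the $\Z[G]$-action.
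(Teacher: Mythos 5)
Your proof is correct and follows essentially the same route as the paper: apply the relation to an arbitrary $m\in M$ and observe that $N_{H_i}(b_i m)\in M^{H_i}$, so $d\cdot m$ lands in $\sum_i a_i M^{H_i}$ (resp.\ $\sum_i M^{H_i}$). The extra remarks about $a_i M^{H_i}$ being a subgroup and about deducing (1) from (2) are fine but not needed beyond what the paper already records.
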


\begin{proof}
  Let $m \in M$.
  In the first case we have
  \[
    d\cdot m = \left( \sum_{i=1}^\ell a_i N_{H_i}\right)\cdot m =
    \sum_{i=1}^\ell \bigl(a_i N_{H_i}\cdot m\bigr) \in \sum_{i=1}^\ell M^{H_i},
  \]
  whereas in the second case we have (using the $G$-invariance of $M$)
  \[
    d\cdot m = \left( \sum_{i=1}^\ell a_i N_{H_i}b_i\right)\cdot m
    = \sum_{i=1}^\ell \bigl(a_i N_{H_i}b_i\cdot m\bigr) \in \sum_{i=1}^\ell a_iM^{H_i}. \qedhere
  \]
\end{proof}

The following proposition shows that the exponent bound is optimal, therefore
justifying Definition~\ref{def:optiden}.
\begin{proposition}
  Let~$M=\Z[G]$ be the left regular~$\Z[G]$-module, and let~$\Hsg$ be a set of
  nontrivial subgroups of~$G$ such that~$d(\Hsg)>0$.
  Let~$N$ be the~$\Z[G]$-submodule generated by~$\sum_{H\in\Hsg}M^H$.
  Then the exponent of~$M/N$ equals~$d(\Hsg)$.
\end{proposition}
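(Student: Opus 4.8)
The plan is to compute the index of the $\Z[G]$-module $J_{\Z} = \sum_{H\in\Hsg} \Z[G]\,M^H = \sum_{H\in\Hsg}\Z[G]N_H\Z[G]$ inside $M=\Z[G]$ by identifying $J_{\Z}$ with the integral ideal $\langle N_H \mid H\in\Hsg\rangle_{\Z[G]}$ and using the defining property of $d(\Hsg)$. First I would observe that since $M^H = N_H\cdot M = N_H\Z[G]$ as a left module (this is not quite true in general, but $M^H \supseteq N_H\Z[G]$ always holds, and we only need the two-sided ideal $\langle N_H\rangle$, so more robustly I would work directly with $I := \langle N_H \mid H\in\Hsg\rangle_{\Z[G]}$ and note that $I$ is precisely the $\Z[G]$-submodule of $M=\Z[G]$ generated by the $N_H x$, hence by the $M^H$ after left-multiplication). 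The quotient $M/I$ is a finitely generated abelian group, and I claim its order equals $d(\Hsg)$.

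The key step is to show $[\Z[G] : I] = d(\Hsg)$. Here I would use that $I$ is a two-sided ideal of $\Z[G]$ containing a positive integer (namely $d(\Hsg) \in \Z\cap I$ by definition), so $M/I$ is finite. To pin down the order, I would tensor with $\Z_p$ for each prime $p$ and work locally. Over $\Z_p$, the ideal $I\otimes\Z_p$ is a two-sided ideal of $\Z_p[G]$; since $\Q_p[G]$ is semisimple, $I\otimes\Q_p = \Q_p[G]$ by Proposition~\ref{prop:generel} (applied to the localization, or more directly because $d(\Hsg)>0$ means $I$ already contains a unit of $\Q[G]$). So $I\otimes\Z_p$ is a full-rank sublattice of $\Z_p[G]$, and $[\Z_p[G] : I\otimes\Z_p]$ is the $p$-part of $[\Z[G]:I]$. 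By Proposition~\ref{prop:dencoprime}, $p\nmid d(\Hsg)$ if and only if there is a norm relation over $\F_p$, which happens if and only if $I\otimes\Z_p = \Z_p[G]$, i.e.\ the $p$-part of the index is trivial exactly when $p\nmid d(\Hsg)$. This shows $[\Z[G]:I]$ and $d(\Hsg)$ have the same radical; to get equality of the full integers I would sharpen the local computation: $d(\Hsg)\Z_p = (\Z\cap I)\otimes\Z_p$, and I would argue that $\Z\cap I = \Z\cap (I\otimes\Z_p)$ agrees $p$-adically with the exponent of $\Z_p[G]/(I\otimes\Z_p)$ because $\Z_p[G]/(I\otimes\Z_p)$ is a cyclic $\Z_p$-module generated by the image of $1$ (as $I$ is a left ideal containing the right $\Z_p[G]$-submodule generated by $1$ modulo $I$, the quotient is generated by $\bar 1$ over $\Z_p[G]$, but being killed by its exponent it is generated by $\bar 1$ over $\Z_p$). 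Hence the order of $\Z_p[G]/(I\otimes\Z_p)$ equals the additive order of $\bar 1$, which is exactly the smallest power of $p$ lying in $I\otimes\Z_p$, i.e.\ the $p$-part of $d(\Hsg)$. Multiplying over all $p$ gives $[\Z[G]:I] = d(\Hsg)$.

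The main obstacle is the last refinement: passing from "same set of prime divisors" to "equal integers". The cleanest route is the cyclicity observation, namely that $M/I = \Z[G]/I$ is generated as a $\Z[G]$-module by the image of $1$, and since it is finite it is annihilated by $d(\Hsg)$, so $d(\Hsg)M\subseteq I$; combined with the fact that any $n$ with $nM\subseteq I$ satisfies $n\in \Z\cap I = d(\Hsg)\Z$, one gets that $d(\Hsg)$ is the exponent of $M/I$. To upgrade "exponent" to "order" one notes the quotient, being cyclic over $\Z[G]$ and of exponent $d(\Hsg)$, is a quotient of $\Z[G]/d(\Hsg)\Z[G]$ by a submodule; but I would instead argue locally at each $p$ as above, where $\Z_p[G]/(I\otimes\Z_p)$ is forced to be $\Z_p$-cyclic of order $p^{v_p(d(\Hsg))}$ precisely because its $\Z_p[G]$-generator $\bar 1$ has additive order equal to the exponent, which here coincides with the order since the module is $\Z_p$-cyclic. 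Assembling the local contributions yields $[\Z[G] : \langle\sum_{H\in\Hsg}M^H\rangle_{\Z[G]}] = d(\Hsg)$, as required.
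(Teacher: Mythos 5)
Your identification of the $\Z[G]$-module $N$ generated by $\sum_{H\in\Hsg}M^H$ with the two-sided ideal $I=\langle N_H\mid H\in\Hsg\rangle_{\Z[G]}$ is correct and is exactly the paper's first step, and your observation that $d(\Hsg)$ is the \emph{exponent} of $M/I$ (upper bound from Proposition~\ref{prop:meta}, lower bound because the order of $\bar{1}$ in $M/I$ is by definition the positive generator of $\Z\cap I$) reproduces the paper's argument in full. Note that this is all the paper proves: the sentence introducing the proposition announces that it ``shows that the exponent bound is optimal'', and the proof concludes by computing the order of the image of~$1$ in~$M/N$, i.e.\ the exponent of the quotient.

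The part of your proposal that goes beyond this --- upgrading the exponent to the cardinality $[M:I]$ --- contains a genuine error. The claim that $\Z_p[G]/(I\otimes\Z_p)$ is a cyclic $\Z_p$-module is false, and the justification offered is circular: you conclude that the additive order of $\bar{1}$ equals the order of the module ``since the module is $\Z_p$-cyclic'', which is the very point at issue; being a cyclic $\Z_p[G]$-module killed by $p^k$ only exhibits the quotient as a quotient of $(\Z/p^k\Z)[G]$, which has rank $|G|$ over $\Z/p^k\Z$. Concretely, take $G=C_3\times C_3$ and $\Hsg$ the four subgroups of order~$3$. Writing $\F_3[G]\cong\F_3[u,v]/(u^3,v^3)$ with $u=x-1$, $v=y-1$, the reductions of the four norm elements are $u^2$, $v^2$, $(u+v+uv)^2$ and $(xy^2-1)^2$, which generate exactly the ideal $(u,v)^2$; since $3\in I$, this gives $\Z[G]/I\cong\F_3^{\,3}$ of order $27$, whereas $d(\Hsg)=3$. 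So the index in the sense of cardinality genuinely exceeds $d(\Hsg)$ in general, and the statement can only be read the way the paper's proof reads it, namely that the exponent of $M/N$ (equivalently, the order of $\bar 1$ in $M/N$) equals $d(\Hsg)$. Your proof of that weaker statement is correct and coincides with the paper's.
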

\begin{proof}
  By Proposition~\ref{prop:meta}, the exponent divides~$d(\Hsg)$.

  Let~$H\in\Hsg$. Then the set of elements of~$M$ of the form~$N_Hg$, where~$g$
  ranges over a set of representatives of~$G/H$, forms a $\Z$-basis of~$M^H$.
  The~$\Z[G]$-module generated by~$M^H$
  is therefore the two-sided ideal generated by~$N_H$.
  Putting all~$H$ together, we see that the $\Z[G]$-submodule~$N$ equals the
  two-sided ideal generated by the~$N_H$ for~$H\in \Hsg$.
  In particular, the order of the image of~$1$ in the quotient~$M/N$
  equals~$d(\Hsg)$, proving the proposition.
\end{proof}

We will now apply Proposition~\ref{prop:meta} to both the additive and the
multiplicative $\Z[G]$-modules attached to~$K$.

\subsection{Additive structure}

Consider $M = \order_K$, the ring of integers of the number field~$K$. For
every $H \leq G$ we have $M^H = \order_{K^H}$,
where $K^H$ is the fixed field of~$H$.
Thus from Proposition~\ref{prop:meta} we obtain the following statement.
Recall that an order~$\order$ of $K$ is defined to be \emph{$p$-maximal} if
$[\order_K : \order]$ is not divisible by $p$.

\begin{corollary}\label{cor:relmaxorder}
  \hfill
  \begin{enumerate}
    \item
      If $G$ admits a scalar norm relation of the form~(\ref{eq:normrel}), then
      the exponent of the quotient
      \[
        \OO_K / (\OO_{K^{H_1}} + \dotsb + \OO_{K^{H_\ell}})
      \]
      is finite and divides~$d$.
      In particular,
      the ring of integers~$\OO_K$ is generated, as an abelian group, by
      the~$\OO_{K^{H_i}}$ together with any order that is~$p$-maximal at
      all primes~$p\mid d$.
    \item
      If $G$ admits a norm relation of the form~(\ref{eq:normrelgen}), then
      the exponent of the quotient
      \[
        \OO_K / (a_1\OO_{K^{H_1}} + \dotsb + a_\ell\OO_{K^{H_\ell}})
      \]
      is finite and divides~$d$.
      In particular,
      the ring of integers~$\OO_K$ is generated, as a $\Z[G]$-module, by
      the~$\OO_{K^{H_i}}$ together with any order that is~$p$-maximal at
      all primes~$p\mid d$.
  \end{enumerate}
\end{corollary}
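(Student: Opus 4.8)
The plan is to obtain both parts as immediate corollaries of Proposition~\ref{prop:meta} applied to the $\Z[G]$-module $M = \OO_K$. First I would record the two structural facts that make this legitimate: that $\OO_K$ is stable under $G = \Gal(K/F)$, since each element of $G$ is a ring automorphism of $K$ fixing $\Z$ and hence preserves integrality; and that $(\OO_K)^H = K^H \cap \OO_K = \OO_{K^H}$ for every subgroup $H \le G$, as already noted just before the statement. With these identifications in place, parts~(1) and~(2) of Proposition~\ref{prop:meta} give directly that the exponent of $\OO_K/(\OO_{K^{H_1}} + \dots + \OO_{K^{H_\ell}})$ (respectively of $\OO_K/(a_1\OO_{K^{H_1}} + \dots + a_\ell\OO_{K^{H_\ell}})$) is finite and divides $d$.

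For the ``in particular'' clauses I would argue via B\'ezout. Let $N$ denote the abelian group $\OO_{K^{H_1}} + \dots + \OO_{K^{H_\ell}}$ in case~(1), and the $\Z[G]$-module generated by $\OO_{K^{H_1}}, \dots, \OO_{K^{H_\ell}}$ in case~(2); in the latter case $N$ contains $a_1\OO_{K^{H_1}} + \dots + a_\ell\OO_{K^{H_\ell}}$, so in both cases the exponent bound yields $d\cdot\OO_K \subseteq N$. Now let $\order$ be an order of $K$ that is $p$-maximal at every prime $p \mid d$; then $m := [\OO_K : \order]$ is coprime to $d$ (any common prime factor would contradict $p$-maximality at that $p$), and $m\cdot\OO_K \subseteq \order$ because $\OO_K/\order$ is a finite abelian group of order $m$, hence of exponent dividing $m$. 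Choosing $u,v\in\Z$ with $um + vd = 1$, every $x\in\OO_K$ satisfies $x = u(mx) + v(dx) \in \order + N$. Therefore $\OO_K$ is contained in, and so equal to, the abelian group (resp.\ the $\Z[G]$-module) generated by $N$ together with $\order$, which is exactly the abelian group (resp.\ $\Z[G]$-module) generated by the $\OO_{K^{H_i}}$ together with $\order$.

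I do not expect any genuine obstacle: the argument is a routine combination of Proposition~\ref{prop:meta} with the elementary facts that the index $[\OO_K:\order]$ annihilates $\OO_K/\order$ and that $p$-maximality at all $p\mid d$ is equivalent to coprimality of this index with $d$. The only point that warrants a little care is bookkeeping between the ``abelian group'' statement in case~(1) and the ``$\Z[G]$-module'' statement in case~(2): in case~(2) one uses that the $\Z[G]$-module generated by the $\OO_{K^{H_i}}$ already contains each $a_i\OO_{K^{H_i}}$, so no hypothesis of $G$-stability on $\order$ is needed; the generated $\Z[G]$-module contains $N$, contains $\order$, hence contains $N+\order=\OO_K$.
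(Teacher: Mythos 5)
Your proposal is correct and follows exactly the route the paper intends: apply Proposition~\ref{prop:meta} to $M=\OO_K$ using $(\OO_K)^H=\OO_{K^H}$, which is precisely the remark the paper makes immediately before stating the corollary (the paper leaves the rest implicit). Your B\'ezout argument for the ``in particular'' clauses, using that $[\OO_K:\order]$ is coprime to $d$ and annihilates $\OO_K/\order$, correctly fills in the omitted routine step, including the observation that in case~(2) the generated $\Z[G]$-module already contains each $a_i\OO_{K^{H_i}}$.
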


\subsection{Multiplicative structure}
The group~$K^\times$ is naturally a~$\Z[G]$-module, of which we will
consider various submodules as follows.
Let $S$ be a $G$-stable set of non-zero prime ideals of $\order_K$.
Recall that
\[ \order_{K,S}^\times = \{ x \in K^\times \, \mid \, v_\p(x) = 0 \text{ for all $\p \not\in S$}\} \]
is the group of $S$-units of $K$.
Let~$L$ be a subfield of $K$; we define the $S$-units of $L$ as
$\order_{L, S'}^\times$ where
$S' = \{ L \cap \p \, \mid \, \p \in S \}$.
The multiplicative group $M = \order_{K, S}^\times$ is a $\Z[G]$-submodule
of~$K^\times$ and for $H \leq G$ we have
$M^H = \order_{K^H, S}^\times$.

Recall that for a finitely generated subgroup $V \subseteq K^\times$ and $d \in
\Z_{>0}$, the \textit{$d$-saturation of~$V$} is the smallest group $W \subseteq K^\times$ such that $V \subseteq W$ and
$K^\times/W$ is $d$-torsion-free.
Similarly, the \textit{saturation of $V$} is the smallest group $W \subseteq K^\times$ such that $V \subseteq W$ and $K^\times/W$ is torsion-free.
The group $V$ is called \textit{$d$-saturated} (resp. \textit{saturated}) if $V$ is equal to its $d$-saturation (resp. saturation).

Note that the~$S$-unit group of $K$ is saturated in~$K^\times$, i.e. every element having a
nonzero power that is an~$S$-unit of $K$ is itself an~$S$-unit of $K$.
Applying Proposition~\ref{prop:meta} to this situation yields the following.

\begin{corollary}\label{cor:relSunits}
  \hfill
  \begin{enumerate}
    \item
      If $G$ admits a scalar norm relation of the form~(\ref{eq:normrel}), then
      the exponent of the quotient
      \[
        \OO_{K, S}^\times / \OO_{K^{H_1}, S}^\times \dotsm \OO_{K^{H_\ell}, S}^\times
      \]
      is finite and divides~$d$.
      In particular,
      the group $\OO_{K, S}^\times$ of $S$-units of~$K$ equals the
      $d$-saturation of $\OO_{K^{H_1}, S} \dotsm \OO_{K^{H_\ell}, S}^\times$.
    \item\label{item:relSunits-generel}
      If $G$ admits a norm relation of the form~(\ref{eq:normrelgen}), then
      the exponent of the quotient
      \[
        \OO_{K, S}^\times / (\OO_{K^{H_1}, S}^{\times})^{a_1} \dotsm
        (\OO_{K^{H_\ell}, S}^\times)^{a_\ell}
      \]
      is finite and divides~$d$.
      In particular,
      the group $\OO_{K, S}^\times$ of $S$-units of~$K$ equals the
      $d$-saturation of the~$\Z[G]$-module generated by
      $(\OO_{K^{H_1}, S}^{\times}) \dotsm (\OO_{K^{H_\ell}, S}^\times)$.
  \end{enumerate}
\end{corollary}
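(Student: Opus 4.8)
The plan is to obtain Corollary~\ref{cor:relSunits} directly from Proposition~\ref{prop:meta} applied to the $\Z[G]$-module $M = \OO_{K,S}^\times$, together with the two observations recorded just before the statement: that $M^H = \OO_{K^H,S}^\times$ for every subgroup $H\le G$, and that $\OO_{K,S}^\times$ is saturated in $K^\times$.

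First I would record the exponent bounds. For part~(1) I apply Proposition~\ref{prop:meta}(1) with the scalar norm relation~(\ref{eq:normrel}): the quotient $M/\sum_{i=1}^\ell M^{H_i}$ has exponent dividing~$d$. Rewriting the $\Z[G]$-action multiplicatively as in the Notations, $\sum_{i=1}^\ell M^{H_i}$ is exactly the subgroup $\OO_{K^{H_1},S}^\times\cdots\OO_{K^{H_\ell},S}^\times$ of $\OO_{K,S}^\times$, which gives the first claim. For part~(2), Proposition~\ref{prop:meta}(2) with~(\ref{eq:normrelgen}) gives that $M/\sum_{i=1}^\ell a_i M^{H_i}$ has exponent dividing~$d$, and $a_i M^{H_i}$ is, in multiplicative notation, the subgroup $(\OO_{K^{H_i},S}^\times)^{a_i}$ (image of $\OO_{K^{H_i},S}^\times$ under the endomorphism $x\mapsto x^{a_i}$ of $K^\times$); this is the exponent statement of~(2).

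Next I would deduce the ``in particular'' clauses by a single saturation argument used in both parts. Set $V = \OO_{K^{H_1},S}^\times\cdots\OO_{K^{H_\ell},S}^\times$ in part~(1), and in part~(2) let $V$ be the $\Z[G]$-submodule of $\OO_{K,S}^\times$ generated by this product; in either case $V$ is a finitely generated subgroup of $\OO_{K,S}^\times$ (each $\OO_{K^{H_i},S}^\times$ is finitely generated by Dirichlet's $S$-unit theorem, and $G$ is finite), and by the previous paragraph $\OO_{K,S}^\times/V$ has exponent dividing~$d$ — in part~(2) because $V$ contains each $(\OO_{K^{H_i},S}^\times)^{a_i}$, so the quotient $M/V$ is a quotient of $M/\sum_i a_i M^{H_i}$. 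Thus $u^d\in V$ for all $u\in\OO_{K,S}^\times$. I then claim $\OO_{K,S}^\times$ is the $d$-saturation of $V$ in $K^\times$, checking the three defining properties: (i) $\OO_{K,S}^\times$ is $d$-saturated in $K^\times$ — indeed it is saturated, since $x^d\in\OO_{K,S}^\times$ forces $d\,v_\p(x)=0$, hence $v_\p(x)=0$, for all $\p\notin S$; (ii) $V\subseteq\OO_{K,S}^\times$; (iii) any $d$-saturated $W$ with $V\subseteq W$ contains every $u\in\OO_{K,S}^\times$, because $u^d\in V\subseteq W$. Hence $\OO_{K,S}^\times$ is the smallest $d$-saturated subgroup of $K^\times$ containing $V$, i.e.\ its $d$-saturation.

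There is essentially no obstacle here: all the arithmetic content is already in Proposition~\ref{prop:meta}, and the only points requiring care are the translation of the $\Z[G]$-action between additive and multiplicative notation and the invocation of the saturation property of $\OO_{K,S}^\times$. The mildest subtlety is that in part~(2) one passes from $\sum_i a_i M^{H_i}$ to the possibly larger $\Z[G]$-module $V$, which only shrinks the quotient and hence preserves the divisibility of the exponent by~$d$.
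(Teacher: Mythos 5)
Your proposal is correct and follows exactly the route the paper intends: the paper derives Corollary~\ref{cor:relSunits} directly from Proposition~\ref{prop:meta} applied to $M=\OO_{K,S}^\times$, using the identification $M^H=\OO_{K^H,S}^\times$ and the saturation of $\OO_{K,S}^\times$ in $K^\times$, which is precisely your argument. The details you supply (the additive-to-multiplicative translation, the three-point verification of the $d$-saturation, and the observation that enlarging $\sum_i a_i M^{H_i}$ to the $\Z[G]$-module it generates only shrinks the quotient) are exactly the ones the paper leaves implicit.
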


In view of the previous result, one might ask to what extent the relations
between the invariants of~$K$ and of its subfields force a norm
relation.
A positive result in this direction was obtained by Artin
in~\cite{Artin1948} for scalar norm relations, which easily extends
to norm relations as follows.

\begin{proposition}\label{prop:artin}
  The group $G$ admits a norm relation~$d = \sum_{i=1}^\ell a_i N_{H_i} b_i$
  if and only if for all $x \in K^\times$ we have
  $x^d = \prod_{1 \leq i \leq \ell} \norm_{K/K_i}(x^{b_i})^{a_i}$.
\end{proposition}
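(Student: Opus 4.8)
The plan is to translate the multiplicative identity into an identity in the group algebra $\Z[G]$ and then show that this translation loses no information. First I would record the dictionary: since $\Gal(K/K_i) = H_i$, for $y\in K^\times$ we have $\Nm_{K/K_i}(y) = \prod_{h\in H_i} h(y) = y^{N_{H_i}}$, where powers denote the $\Z[G]$-action in multiplicative notation. Using the left-action rule $x^{ab}=(x^b)^a$ this gives $\Nm_{K/K_i}(x^{b_i})^{a_i} = x^{a_i N_{H_i} b_i}$, and therefore
\[
  \prod_{i=1}^\ell \Nm_{K/K_i}(x^{b_i})^{a_i} = x^{\sum_{i=1}^\ell a_i N_{H_i} b_i}.
\]
Hence the displayed identity holds for all $x\in K^\times$ if and only if $x^{\rho}=1$ for all $x\in K^\times$, where $\rho = d - \sum_{i=1}^\ell a_i N_{H_i} b_i \in \Z[G]$; that is, if and only if $\rho$ annihilates the $\Z[G]$-module $K^\times$.

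With this reformulation the ``only if'' direction is immediate: if $d = \sum_i a_i N_{H_i} b_i$ holds in $\Z[G]$, then $\rho = 0$ and trivially $x^{\rho}=1$ for every $x\in K^\times$.

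The substantive direction is the converse: if $\rho = \sum_{g\in G} c_g g$ annihilates $K^\times$, I must show $c_g = 0$ for every $g$. The idea is to detect the coefficients via prime-ideal valuations. For a prime $\mathfrak{P}$ of $K$ and $x\in K^\times$, applying the normalized valuation $v_{\mathfrak{P}}$ to $x^{\rho} = \prod_g g(x)^{c_g} = 1$ and using $v_{\mathfrak{P}}(g(x)) = v_{g^{-1}\mathfrak{P}}(x)$ yields $\sum_{g\in G} c_g\, v_{g^{-1}\mathfrak{P}}(x) = 0$ for all $x$. By the Chebotarev density theorem there is a prime $\mathfrak{p}$ of $F$ that splits completely in $K$; fix $\mathfrak{P}\mid\mathfrak{p}$, so the decomposition group of $\mathfrak{P}$ is trivial and $g\mapsto g^{-1}\mathfrak{P}$ is a bijection from $G$ onto the set of primes of $K$ above $\mathfrak{p}$. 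For each such prime $\mathfrak{Q}$, choosing $x\in K^\times$ with $(x) = \mathfrak{Q}^{\,h}$, where $h$ is the order of the ideal class of $\mathfrak{Q}$ (equivalently, invoking the approximation theorem for inequivalent valuations), produces an element with $v_{\mathfrak{Q}}(x)\neq 0$ and $v_{\mathfrak{Q}'}(x)=0$ for every other prime $\mathfrak{Q}'$ above $\mathfrak{p}$; substituting it forces the single coefficient $c_g$ with $g^{-1}\mathfrak{P}=\mathfrak{Q}$ to vanish. Letting $\mathfrak{Q}$ range over all primes above $\mathfrak{p}$ gives $c_g = 0$ for all $g\in G$, so $\rho = 0$, i.e. $d = \sum_i a_i N_{H_i} b_i$ in $\Z[G]$.

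The only real obstacle is isolating a single coefficient $c_g$: for an arbitrary prime $\mathfrak{P}$ the valuation relation only tells us that the sum of the $c_g$ over each coset of the decomposition group of $\mathfrak{P}$ is zero, so to obtain the individual vanishing one genuinely needs a prime of $F$ with trivial decomposition group in $K$, which is precisely what the existence of a completely split prime provides. Everything else in the argument is formal; this recovers the result of Artin~\cite{Artin1948}, now with the elements $b_i$ carried along.
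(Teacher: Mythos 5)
Your proof is correct, and its first half is exactly the paper's argument: both reduce the statement to showing that $\sigma = \sum_i a_i N_{H_i} b_i - d$ annihilates the $\Z[G]$-module $K^\times$ if and only if $\sigma = 0$. The difference is in how the key input $\Ann_{\Z[G]}(K^\times) = 0$ is handled: the paper simply cites Artin (Theorem~5 of~\cite{Artin1948}), whereas you reprove it from scratch via a completely split prime $\mathfrak{p}$ of $F$, using that the map $g \mapsto g^{-1}\mathfrak{P}$ is then a bijection onto the primes above $\mathfrak{p}$ and that one can realize, for each such prime $\mathfrak{Q}$, an element of $K^\times$ whose divisor is supported exactly at $\mathfrak{Q}$ (a generator of $\mathfrak{Q}^{h}$ with $h$ the order of $[\mathfrak{Q}]$ in the class group). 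All the steps check out, including the valuation identity $v_{\mathfrak{P}}(g(x)) = v_{g^{-1}\mathfrak{P}}(x)$ and your correct handling of the left-action convention $x^{ab} = (x^b)^a$ in deriving $\Nm_{K/K_i}(x^{b_i})^{a_i} = x^{a_i N_{H_i} b_i}$. What your version buys is self-containedness (and the observation, which you rightly flag, that an arbitrary prime only kills coset sums over the decomposition group, so complete splitting is genuinely needed for this detection argument); what the paper's version buys is brevity. As a minor remark, full Chebotarev is more than you need: the existence of infinitely many completely split primes follows already from the Dedekind--Kummer theorem applied to a defining polynomial of $K$.
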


\begin{proof}
  Consider the element $\sigma = \sum_{i=1}^\ell a_i N_{H_i}b_i - d \in \Z[G]$
  and assume that the equality in $K^\times$ holds.
  Thus $x^\sigma = 1$ for all $x \in K^\times$, that is,
  $\sigma \in \Ann_{\Z[G]}(K^\times)$.
  Since $\Ann_{\Z[G]}(K^\times) = 0$ by \cite[Theorem 5]{Artin1948}, the result
  follows.
\end{proof}

Concerning the structure of the $S$-units, we have the following partial
converse to Corollary~\ref{cor:relSunits}.
We will not use it in this work, but it answers a natural question: in a normal
extension, is the existence of a norm relation necessary to be able to use
subfields to recover the group of $S$-units?

\begin{proposition}\label{prop:conv}
  Let~$K/F$ be a finite normal extension of number fields with Galois group~$G$, and let~$\Hsg$
  be a set of nontrivial subgroups of~$G$.
  Let~$S$ be a finite $G$-stable set of prime ideals of~$K$.
  Assume that at least one of the following holds:
  \begin{itemize}
    \item $F$ is not totally real,
    \item there is a real place of~$F$ that splits completely in~$K$, or
    \item there is a prime ideal~$\pg$ of~$F$ that splits completely in~$K$ and
      such that the primes above~$\pg$ are in~$S$.
  \end{itemize}
  If the $\Z[G]$-submodule of~$\OO_{K,S}^\times$ generated by the~$\OO_{K^H,S}^\times$
  for~$H\in\Hsg$ has finite index, then~$G$ admits a norm relation with respect
  to~$\Hsg$.
\end{proposition}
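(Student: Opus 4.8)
The plan is to prove the contrapositive by working with $\C[G]$-modules: assume $G$ admits no norm relation with respect to $\Hsg$, and show that the $\Z[G]$-submodule $N$ of $M:=\OO_{K,S}^\times$ generated by the $\OO_{K^H,S}^\times$, $H\in\Hsg$, has infinite index. By Proposition~\ref{prop:generel} (in the form~(\ref{item:repsC})) there is a simple $\C[G]$-module $V$ with $V^H=0$ for every $H\in\Hsg$; since $\Hsg$ contains a nontrivial subgroup, $V$ is not the trivial module. The strategy is to show that $V$ nonetheless occurs as a constituent of $M_\C:=M\otimes_\Z\C$. Granting this, since $\C$ is flat over $\Z$ we have $\OO_{K^H,S}^\times\otimes\C=(M_\C)^H$, so $N\otimes\C$ is the $\C[G]$-submodule of $M_\C$ generated by the $(M_\C)^H$; each $(M_\C)^H$ has vanishing $V$-isotypic component (because $V^H=0$), hence $N\otimes\C$ lies in the proper $\C[G]$-submodule of $M_\C$ spanned by all non-$V$-isotypic components. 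Thus $N\otimes\Q\subsetneq M\otimes\Q$ and $M/N$ is infinite, contradicting the finite-index hypothesis; so $G$ must admit a norm relation with respect to $\Hsg$.

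It remains to identify $M_\C$ as a $\C[G]$-module. Here I would use Dirichlet's $S$-unit theorem together with the $G$-equivariance of the logarithmic embedding $u\mapsto(\log\|u\|_v)_v$: writing $S'=S\cup S_\infty$ for the $G$-stable set of places of $K$ obtained by adjoining the archimedean places, the embedding identifies $M_\C$ with the augmentation-zero hyperplane in $\C^{S'}$, where $G$ acts on $\C^{S'}$ by permuting coordinates. Decomposing $S'$ into $G$-orbits, the orbit lying above a place $w$ of $F$ is $G$-isomorphic to $G/G_w$ with $G_w$ the decomposition group, so $\C^{S'}\cong\bigoplus_w\Ind_{G/G_w}(\triv)$, the sum running over the places $w$ of $F$ below $S'$ (i.e.\ the archimedean places of $F$ together with the primes of $F$ below $S$), and $M_\C$ is this module with one copy of the trivial module removed.

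The crux — and the only place where the three hypotheses are used — is to produce a place $w_0$ of $F$ lying below $S'$ that splits completely in $K$, i.e.\ with $G_{w_0}=1$: if $F$ is not totally real, a complex archimedean place works (there $F_{w_0}=K_v=\C$ for $v\mid w_0$, so $G_{w_0}=1$); under the second hypothesis, take the given completely split real place; under the third, take $\pg$, whose primes above lie in $S\subseteq S'$. For such a $w_0$ the summand $\Ind_{G/G_{w_0}}(\triv)=\Ind_{G/1}(\triv)\cong\C[G]$ is the regular representation, in which every simple $\C[G]$-module — in particular our nontrivial $V$ — occurs, and deleting a single trivial summand does not affect $V$. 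Hence $V$ is a constituent of $M_\C$, which completes the argument. The main obstacle is bookkeeping rather than conceptual: correctly setting up the $G$-module structure of the $S$-unit group (equivariance of the log map, orbits versus decomposition groups) and recognizing that the relevant local condition extracted from each of the three hypotheses is precisely complete splitting, which produces the regular representation locally; once this is in place, semisimplicity of $\C[G]$ and flatness of $\C$ over $\Z$ finish everything routinely.
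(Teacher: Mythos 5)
Your argument is correct and is essentially the paper's own proof, written out in detail: the paper likewise observes that a completely split place (complex archimedean, split real, or a split prime whose divisors lie in~$S$) forces $\OO_{K,S}^\times\otimes\Q$ to contain the regular representation modulo a trivial summand, and then combines Proposition~\ref{prop:generel} with the fact that a simple module~$V$ is generated by~$V^H$ exactly when $V^H\neq 0$. Your passage to~$\C[G]$-modules instead of~$\Q[G]$-modules and your phrasing as a contrapositive are only cosmetic differences.
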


\begin{proof}
  Under the hypotheses of the proposition, the~$\Q[G]$-module $\OO_{K,S}^\times
  \otimes \Q$ contains a copy of the regular module modulo the trivial
  module, and is generated as a~$\Q[G]$-module by the union of its fixed points
  under the subgroups~$H\in\Hsg$. Now apply
  Proposition~\ref{prop:generel}~(\ref{item:repsQ}) and the fact that for
  every~$H\in\Hsg$ and every simple~$\Q[G]$-module~$V$, the module~$V$ is
  generated by~$V^H$ if and only if~$V^H\neq 0$.
\end{proof}

\subsection{Class group structure}

Let $\Cl(K)$ be the class group of $K$, which is the quotient of the fractional ideals modulo the principal fractional ideals of $K$.
While $\Cl(K)$ is again a $\Z[G]$-module it is not true that $\Cl(K^H) =
\Cl(K)^H$ (in general the natural map $\Cl(K^H) \to \Cl(K)$ is not even injective).

\begin{proposition}\label{prop:apl_normrel}
  Assume that the group~$G$ admits a norm relation~(\ref{eq:normrelgen}).
  Define the maps
  \[
    \Phi\colon \Cl(K) \to \bigoplus_{i=1}^\ell \Cl(K^{H_i}), \ [\ag] \mapsto
    \left(\norm_{K/K^{H_i}}(\ag^{b_i})\right)_i
  \]
  and
  \[
    \Psi \colon \bigoplus_{i=1}^\ell \Cl(K^{H_i}) \longrightarrow \Cl(K),\ \left([\ag_i]\right)_i \longmapsto \prod_i [\ag_i\order_{K}]^{a_i}.
  \]
   Let $R = \Z[\frac 1 d]$.
   Then the map
   \[
     \Phi\otimes R \colon \Cl(K)\otimes R \to \bigoplus_{i=1}^\ell \Cl(K^{H_i})\otimes R
   \]
   is injective, i.e. an isomorphism onto its image, and
   the map
   \[
     \Psi\otimes R \colon \bigoplus_{i=1}^\ell \Cl(K^{H_i})\otimes  R
     \longrightarrow \Cl(K)\otimes  R
   \]
   is surjective.
   The image~$(\Phi\otimes R)(\Cl(K)\otimes  R)$ is a direct summand
   of the group $\bigoplus_{i=1}^\ell \Cl(K^{H_i})\otimes R$, and the
   group~$\Cl(K)/\Cl(K)[d]$ is isomorphic to a subgroup of~$\bigoplus_{i=1}^\ell
   \Cl(K^{H_i})$.
\end{proposition}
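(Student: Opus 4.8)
The strategy is to verify the two composition identities $\Psi\circ\Phi = [d]$ on $\Cl(K)$ and $\Phi\circ\Psi = $ (something with exponent dividing $d$) on $\bigoplus_i \Cl(K^{H_i})$, then read off all four assertions formally. First I would compute $\Psi(\Phi([\ag]))$: by definition this is $\prod_i [\norm_{K/K^{H_i}}(\ag^{b_i})\order_K]^{a_i}$. The key observation is that for an ideal $\bg$ of $K^{H_i}$ one has $\bg\order_K$ has class $[\ag]^{N_{H_i}}$ when $\bg = \norm_{K/K^{H_i}}(\ag^{b_i})$; more precisely, extension of the ideal $\norm_{K/K^{H_i}}(\cg)$ to $K$ gives $\cg^{N_{H_i}}$ up to principal ideals, since $\norm_{K/K^{H_i}}(\cg)\order_K = \prod_{h\in H_i} h(\cg) = \cg^{N_{H_i}}$ exactly as ideals of $K$ (each prime of $K^{H_i}$ pulls back to the product of its conjugates). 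Therefore $\Psi(\Phi([\ag])) = \prod_i [\ag^{b_i}]^{a_i N_{H_i}} = [\ag]^{\sum_i a_i N_{H_i} b_i} = [\ag]^d$, using the norm relation (\ref{eq:normrelgen}) and the fact that $\Cl(K)$ is a $\Z[G]$-module. Hence $\Psi\circ\Phi$ is multiplication by $d$ on $\Cl(K)$.

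Once this is established, tensoring with $R = \Z[1/d]$ makes $d$ invertible, so $(\Psi\otimes R)\circ(\Phi\otimes R)$ is an automorphism of $\Cl(K)\otimes R$; this immediately gives that $\Phi\otimes R$ is injective (in fact split injective) and that $\Psi\otimes R$ is surjective. Split injectivity of $\Phi\otimes R$ means precisely that its image is a direct summand of $\bigoplus_i \Cl(K^{H_i})\otimes R$: indeed $(\Psi\otimes R)\circ (d^{-1}\cdot\mathrm{id})$, suitably interpreted, is a retraction onto the image, so $\bigoplus_i\Cl(K^{H_i})\otimes R$ decomposes as the image of $\Phi\otimes R$ plus the kernel of that retraction. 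For the last assertion I would note that $\Cl(K)$ is finite, write $\Cl(K) = \Cl(K)_d \times \Cl(K)_{d'}$ as in the Notations (the $d$-part and the prime-to-$d$ part; here I mean the part whose order is built from primes dividing $d$ and its complement), and observe that $\Cl(K)/\Cl(K)[d]$ is a quotient of $\Cl(K)_{d'}$ onto which $\Cl(K)\otimes R \cong \Cl(K)_{d'}$ maps — more directly, $\Cl(K)_{d'}$ is canonically a summand of $\Cl(K)\otimes R$, and $\Cl(K)/\Cl(K)[d]$ is isomorphic to a subgroup of $\Cl(K)_{d'}$ (since killing $\Cl(K)[d]$ kills exactly the $d$-torsion, which on the prime-to-$d$ part is trivial and on the $d$-part quotients to something with order dividing $|\Cl(K)_d|$, but actually $\Cl(K)/\Cl(K)[d]$ has order $|\Cl(K)|/|\Cl(K)[d]|$ which equals $|\Cl(K)_{d'}|\cdot|\Cl(K)_d|/|\Cl(K)_d[d]|$...). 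Cleaner: since $\Phi\otimes R$ is injective and $\Cl(K)\otimes R = \Cl(K)_{d'}$ while $\bigoplus_i\Cl(K^{H_i})\otimes R$ is a subgroup of $\bigoplus_i\Cl(K^{H_i})$ when we view the prime-to-$d$ parts, and $\Cl(K)/\Cl(K)[d]$ injects into $\Cl(K)_{d'} = \Cl(K)\otimes R$ — wait, that injection needs care.

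**Where the difficulty lies.** The only genuinely nontrivial algebraic input is the first computation, identifying the composite $\Psi\circ\Phi$ with multiplication by $d$; everything after that is a formal consequence of having $d$ invertible in $R$, plus elementary finite-abelian-group bookkeeping to translate "isomorphism after $\otimes\Z[1/d]$" and "kernel killed by $d$" into the summand and subgroup statements. The subtle point in that first computation is checking that $\norm_{K/K^{H_i}}(\cg)\order_K = \cg^{N_{H_i}}$ as ideals of $K$ — this is standard (it reduces to the prime case and the formula $\norm_{L/L^H}(\PP) = \prod_{h\in H}h(\PP)$, or rather its inverse-image version, combined with how primes of $L^H$ extend), but one must be careful about whether one gets $\cg^{N_{H_i}}$ on the nose or only up to a principal ideal; at the level of ideal classes the distinction is irrelevant, which is why working in $\Cl(K)$ throughout is the right move. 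The last assertion, that $\Cl(K)/\Cl(K)[d]$ embeds in $\bigoplus_i\Cl(K^{H_i})$, is then obtained by noting that $\Cl(K)/\Cl(K)[d]$ is exactly the image of multiplication by $d$ on $\Cl(K)$ — no: it is $\Cl(K)$ modulo its $d$-torsion, which is isomorphic to $d\cdot\Cl(K)$ only when $\Cl(K)$ is, hmm — actually for any finite abelian $A$ and any $d$, $A/A[d] \cong dA$ via $a\mapsto da$, and $dA = \mathrm{im}(d\cdot\mathrm{id}_A) = \mathrm{im}(\Psi\circ\Phi) \subseteq \mathrm{im}(\Psi)$; but we want an injection of $A/A[d]$ into the direct sum of subfield class groups, and that comes from $\Phi$ itself restricted to $dA$: the kernel of $\Phi$ is killed by $d$ (since $\Psi\Phi = d$), hence $\ker\Phi \subseteq \Cl(K)[d]$, so $\Phi$ induces an injection $\Cl(K)/\Cl(K)[d] \hookrightarrow \Cl(K)/\ker\Phi \hookrightarrow \bigoplus_i\Cl(K^{H_i})$. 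That is the cleanest route and is what I would write.
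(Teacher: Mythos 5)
Your computation of $\Psi\circ\Phi = d\cdot\mathrm{Id}$ on $\Cl(K)$ (via $\norm_{K/K^{H_i}}(\cg)\order_K = \cg^{N_{H_i}}$), the deduction of injectivity of $\Phi\otimes R$ and surjectivity of $\Psi\otimes R$ after inverting $d$, and the retraction argument for the direct summand are all correct and essentially the same as the paper's proof; the paper merely packages your retraction $d^{-1}(\Psi\otimes R)$ as the idempotent $e = d^{-1}(\Phi\circ\Psi)\otimes R$ on $\bigoplus_i\Cl(K^{H_i})\otimes R$, which is an equivalent bookkeeping choice.

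The one genuine flaw is in your ``cleanest route'' for the last assertion. From $\ker\Phi\subseteq\Cl(K)[d]$ the map that exists canonically is a \emph{surjection} $\Cl(K)/\ker\Phi\twoheadrightarrow\Cl(K)/\Cl(K)[d]$ (you are quotienting by a larger subgroup), not an injection $\Cl(K)/\Cl(K)[d]\hookrightarrow\Cl(K)/\ker\Phi$; and $\Phi$ does not factor through $\Cl(K)/\Cl(K)[d]$ at all, since it need not kill the $d$-torsion. So the step ``$\Phi$ induces an injection $\Cl(K)/\Cl(K)[d]\hookrightarrow\Cl(K)/\ker\Phi$'' is false as written. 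What you actually obtain is that $\Cl(K)/\Cl(K)[d]$ is a quotient of the subgroup $\Phi(\Cl(K))\cong\Cl(K)/\ker\Phi$ of $\bigoplus_i\Cl(K^{H_i})$, i.e.\ a subquotient. The paper closes this gap with the standard fact that every quotient of a finite abelian group $B$ is abstractly isomorphic to a subgroup of $B$; adding that one sentence repairs your argument. (Your alternative via $\Cl(K)/\Cl(K)[d]\cong d\,\Cl(K)$ and restricting $\Phi$ to $d\,\Cl(K)$ fails for the same reason: $\ker\Phi\cap d\,\Cl(K)$ need not be trivial, as the example $\Z/d^2\Z$ with $\ker\Phi$ equal to the $d$-torsion shows.)
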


\begin{proof}
  The relation~(\ref{eq:normrelgen}) shows that~$\Psi\circ\Phi\colon \Cl(K) \to \Cl(K)$ is the map
  \[
    \Psi\circ\Phi\colon [\ag] \to \prod_i [\ag]^{a_i N_{H_i} b_i} = [\ag]^d,
  \]
  i.e.~$\Psi\circ\Phi = d\Id$. Since~$d$ is invertible in~$R$, this implies
  that~$(\Psi\circ\Phi)\otimes R$ is invertible, and therefore
  that~$\Phi\otimes R$ is injective and~$\Psi\otimes R$ is surjective as claimed.
  Let~$A = \bigoplus_{i=1}^\ell \Cl(K^{H_i})\otimes R$ and~$e =
  d^{-1}(\Phi\circ\Psi)\otimes R \colon A \to A$; then~$e$ is an idempotent, so that~$A =
  eA\oplus (1-e)A$, and we have~$eA = \Phi(\Cl(K)\otimes R)$ by surjectivity
  of~$\Psi\otimes R$.
  Finally, we have a surjection~$\Cl(K)/\ker \Phi \to
  \Cl(K)/\Cl(K)[d]$, and $\Phi$ induces an injection~$\Cl(K)/\ker \Phi \to
  \bigoplus_{i=1}^\ell \Cl(K^{H_i})$, proving that~$\Cl(K)/\Cl(K)[d]$ is a
  subquotient of~$\bigoplus_{i=1}^\ell \Cl(K^{H_i})$. Since every quotient of a
  finite abelian group~$B$ is also isomorphic to a subgroup of~$B$, this proves
  the proposition.
\end{proof}

\subsection{Analytic structure}

For the sake of completeness, we also mention the following classical consequence for the analytic structure of $K$.
For Brauer relations (and therefore also for scalar norm relations), we have the following well known implications for equalities of zeta functions.

\begin{proposition}\label{prop:brauerzeta}
  Suppose~$G$ admits a useful Brauer relation, written in the form
  \[
    a_1 \Ind_{G/1}(\triv_1) = \sum_{1\neq H\le G} a_H \Ind_{G/H}(\triv_H)
  \]
  with~$a_H\in\Z$ and~$a_1>0$.
  Then the following equality of zeta functions holds:
  \[
    \zeta_K(s)^{a_1} = \prod_{1\neq H\le G}\zeta_{K^H}(s)^{a_H}.
  \]
\end{proposition}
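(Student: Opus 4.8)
The plan is to invoke the theory of Artin $L$-functions and their Artin formalism. Recall that for a number field $K$ with a Galois extension $K/F$ of group $G$, the Dedekind zeta function factors as a product of Artin $L$-functions; in particular, for a subgroup $H \le G$, we have $\zeta_{K^H}(s) = L(s, \Ind_{G/H}(\triv_H), K/F)$, where on the right we interpret the permutation character $\Ind_{G/H}(\triv_H)$ as a virtual (genuine) character of $G$. This is the standard identity $\zeta_{K^H}(s) = \prod_{\chi} L(s,\chi,K/F)^{\langle \Ind_{G/H}(\triv_H),\chi\rangle_G}$, where $\chi$ ranges over the irreducible characters of $G$.

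First I would record the key multiplicativity property of Artin $L$-functions: for virtual characters $\psi_1, \psi_2$ of $G$ and $a_1, a_2 \in \Z$, one has $L(s, a_1\psi_1 + a_2\psi_2, K/F) = L(s,\psi_1,K/F)^{a_1} L(s,\psi_2,K/F)^{a_2}$, and $L(s,0,K/F) = 1$. Now, by hypothesis we have the useful Brauer relation, rewritten as an equality of virtual characters
\[
  a_1 \Ind_{G/1}(\triv_1) = \sum_{1\neq H\le G} a_H \Ind_{G/H}(\triv_H)
\]
with $a_H \in \Z$ and $a_1 > 0$. Applying $L(s, -, K/F)$ to both sides and using multiplicativity, together with the identities $\zeta_K(s) = L(s, \Ind_{G/1}(\triv_1), K/F)$ (since $\Ind_{G/1}(\triv_1)$ is the regular character and $K^1 = K$) and $\zeta_{K^H}(s) = L(s, \Ind_{G/H}(\triv_H), K/F)$ for each $H$, we obtain
\[
  \zeta_K(s)^{a_1} = \prod_{1\neq H\le G}\zeta_{K^H}(s)^{a_H},
\]
which is exactly the claimed equality.

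The only subtlety — and the main point to be careful about — is that Artin $L$-functions of genuine (non-virtual) characters are not known to be holomorphic, so the multiplicativity has to be understood as an identity of meromorphic functions (or, more elementarily, as an identity of their Euler products / Dirichlet series in a right half-plane, followed by analytic continuation, which is unconditional). Since $\Ind_{G/H}(\triv_H)$ is always a genuine character, each $L(s,\Ind_{G/H}(\triv_H),K/F) = \zeta_{K^H}(s)$ is a genuine Dedekind zeta function and this convergence issue does not obstruct anything: the identity of Dirichlet series in $\mathrm{Re}(s) > 1$ follows from comparing Euler factors prime by prime using Artin's induction formalism for local factors, and then extends to all of $\C$ by meromorphic continuation of Dedekind zeta functions. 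I would therefore structure the proof as: (i) cite the factorization $\zeta_{K^H}(s) = L(s,\Ind_{G/H}(\triv_H),K/F)$ and the Artin formalism (e.g.\ \cite[Chapter VII]{Neukirch} or \cite{Frohlich1971}), (ii) substitute the Brauer relation, (iii) conclude. This is essentially Brauer's original observation; no real obstacle remains.
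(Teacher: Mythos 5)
Your proof is correct and is exactly the standard Artin-formalism argument (invariance of Artin $L$-functions under induction giving $\zeta_{K^H}(s)=L(s,\Ind_{G/H}(\triv_H),K/F)$, plus additivity in the character); the paper does not spell out a proof but simply cites \cite[Theorem 73]{Frohlich1993}, which is established by precisely this argument. Your remark on meromorphic continuation is a sensible precaution, though here it is harmless since every $L$-function appearing is itself a Dedekind zeta function.
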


\begin{proof}
  See~\cite[Theorem 73]{Frohlich1993}.
\end{proof}

\section{Computational problems in number fields}\label{sec:algo}

We now describe algorithms for solving various computational problems in number fields that exploit the
subfield structure as described in Section~\ref{sec:applications}.
Let $K/F$ be a normal extension of algebraic number fields with Galois group $G$.

\subsection{Construction of relations}\label{subsec:constrrel}

We begin by explaining how to find norm relations.
First note that if $G$ is abelian, then one can use
Theorem~\ref{thm:optimalabelianrels} to write down scalar norm relations directly.
In the general case, let us assume that $\Hsg$ is a set of subgroups.
Considering the $\Q$-subspace
$W = \langle N_H \mid H \in \Hsg \rangle_\Q \subseteq \Q[G]$,
there exists a scalar norm relation if and only if $1 \in W$.
Thus we can find scalar norm relations by using linear algebra over $\Q$.
Similarly, when looking for a scalar norm relation with a specific denominator $d$,
we can check whether $d \in \langle N_H \mid H \in \Hsg\rangle_\Z$ using linear
algebra over $\Z$.
A similar approach works for norm relations.
In this case one has to consider the $\Q$-subspace
$W = \langle N_H \mid H \in \Hsg \rangle_{\Q[G]}
  = \langle g N_H h \mid H \in \Hsg, g, h \in G \rangle_\Q \subseteq \Q[G]$.
From the basic properties of norm elements, we can obtain a smaller generating set for $W$ by considering only the elements $gN_H h$ with $H \in \mathcal H$,
$g \in G/H$ and $h \in \mathcal{N}(H)\backslash G$ where~$\mathcal{N}(H)$
denotes the normalizer of~$H$ in~$G$.

Note that for Brauer relations, a simple linear algebra based method was described by Bosma and de~Smit in~\cite[Section 3]{Bosma2001}.

\subsection{Computing rings of integers}

Let $K/F$ be a normal extension of algebraic number fields with Galois group $G$.
We assume that $G$ admits a norm relation of denominator $d$ of the form
\[
  d = \sum_{i=1}^\ell a_i N_{H_i} b_i
\]
with $H_i \leq G$, $d \in \Z$, $a_i,b_i \in \Z[G]$.
The classical algorithm for computing the ring of integers
$\order_K$ of $K$, that is, finding a $\Z$-basis of~$\order_K$,
proceeds by enlarging a starting order $\order$
successively until $\order = \order_K$ holds (see~\cite[Section
4]{Lenstra1992}); it requires factoring, at least partially, the discriminant
of~$\order$, which can be hard.
Using Corollary~\ref{cor:relmaxorder}, we may alternatively compute the
ring of integers of $\order_K$ using the rings of integers~$\order_{K^{H_i}}$ as
follows.
\begin{enumerate}
  \item
    For each $1 \leq i \leq \ell$ compute $\order_{K^{H_i}}$ classically (or recursively).
  \item
    Determine a $\Z$-basis of the order $\order$ generated by $a_1
    \order_{K^{H_1}} + \dotsb + a_\ell \order_{K^{H_\ell}}$.
  \item \label{item:pimaximal}
    Return $\order + \order_{p_1} + \dotsb + \order_{p_r}$, where
    $p_1,\dotsc,p_r$ denote the prime divisors of~$d$ and $\order_{p_i}$
    denotes the $p_i$-maximal overorder of $\order$ (which can be computed
    efficiently, see~\cite[Theorem 4.5]{Lenstra1992}).
\end{enumerate}

\begin{remark}
  \hfill
  \begin{enumerate}
    \item In case one has a scalar norm relation, that is, $a_i \in \Z$ and $b_i = 1$
      for all $1 \leq i \leq \ell$,
      according to Corollary~\ref{cor:relmaxorder} one can replace $\order$ by the order
      generated by $\order_{K^{H_1}} + \dotsb + \order_{K^{H_\ell}}$.
      In the general case, one can also replace~$\order$ by the order generated
      by the $\Z[G]$-module generated by~$\order_{K^{H_1}} + \dotsb +
      \order_{K^{H_\ell}}$.
    \item By Theorem~\ref{thm:denrelG}, we may always choose the relation so
      that the~$p_i$ are among the prime divisors
      of~$|G|$, and are therefore small. In particular, no hard factorization is
      needed in Step~(\ref{item:pimaximal}).
  \end{enumerate}
\end{remark}

\subsection{Computing $S$-unit groups}\label{subsec:sunit}

Let $K/F$ be a normal extension of algebraic number fields with Galois group $G$.
We assume that $G$ admits a norm relation of denominator $d$ of the form
\[
  d = \sum_{i=1}^\ell a_i N_{H_i} b_i
\]
with $H_i \leq G$, $d \in \Z$, $a_i,b_i \in \Z[G]$.
Let $S$ be a finite $G$-stable set of non-zero prime ideals of $\order_K$.
Our aim is to describe an algorithm for computing a $\Z$-basis of the $S$-unit
group $\order_{K, S}^\times = \{ x \in K \mid v_\p(x) = 0 \text{ for $\p
\not\in S$}\}$.
Based on Corollary~\ref{cor:relSunits}, this can be accomplished by the
following steps.
\begin{enumerate}
  \item
    For each subfield $K_i = K^{H_i}$ determine a basis of the $S$-unit group $\order_{K_i, S}^\times$.
  \item
    Determine the group $V = (\order_{K_i, S}^\times)^{a_1} \dotsm
    (\order_{K_\ell, S}^\times)^{a_\ell} \subseteq \order_{K, S}^\times$.
  \item
    Compute and return the $d$-saturation of $V$.
\end{enumerate}

\begin{remark}
  In case one has a scalar norm relation, that is, $a_i \in \Z$ and $b_i = 1$
  according to Corollary~\ref{cor:relSunits} one can replace $V$ by
  $\order_{K_i, S}^\times \dotsm \order_{K_\ell, S}^\times$.
\end{remark}

The computations in Step~(1) can be done either using the algorithm of Simon~\cite[\S{}I.1.2]{simon} (see also~\cite[7.4.2]{Cohen2000}) or recursively.
As Step~(2) needs no further explanation, we will now describe the saturation in Step~(3).

\subsubsection*{Saturation of finitely generated multiplicative groups.}
Saturation is a well known technique in computational algebraic number theory,
used for example in the class and unit group computation of number fields
(\cite[Section 5.7]{PohstZassenhaus1989}) or the number field sieve
(\cite{Adleman1991}).

We will discuss this problem in the following generality. We let $V \subseteq K^\times$
be a finitely generated subgroup. For a fixed integer $d \in \Z_{>0}$, we wish
to determine the $d$-saturation $W$ of $V$. Recall that this is by definition the smallest
group $W \subseteq K^\times$ with $V \subseteq W$ and $K^\times/W$ $d$-torsion-free.
To determine whether a multiplicative group is $d$-saturated, the following simple result is
crucial.

\begin{lemma}\label{lem:sat}
  Let $V \subseteq K^\times$ be finitely generated. Then the following hold.
  \begin{enumerate}
    \item
      The $d$-saturation of $V$ contains the $d$-torsion of $K^\times$.
    \item
      The group $V$ is $d$-saturated if and only if $V$ is $\noell$-saturated for all primes $\noell$ dividing $d$.
    \item
      For a prime $\noell$ the group $V$ is not $\noell$-saturated if and only if there exists
      $\alpha \in K^\times \setminus V$ with $\alpha^\noell \in V$.
      In this case $\noell$ divides the index $[\langle V, \alpha \rangle : V]$.
    \item
      Let $\noell$ be a prime and assume that $V$ contains the $\noell$-torsion of $K^\times$.
      Then $V$ is $\noell$-saturated if and only if $V \cap (K^\times)^{\noell} = V^\noell$.
  \end{enumerate}
\end{lemma}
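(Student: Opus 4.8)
The plan is to prove the four assertions in order, as each one essentially feeds into the next. For~(1), I would argue that if $\zeta \in K^\times$ satisfies $\zeta^d = 1$, then since the quotient $K^\times/W$ is $d$-torsion-free, the image $\overline\zeta$ of $\zeta$ in $K^\times/W$ satisfies $d\cdot\overline\zeta = 0$, hence $\overline\zeta = 0$, i.e.~$\zeta \in W$; this uses only the definition of $d$-saturation. For~(2), the key point is that a finitely generated abelian group $K^\times/V$ is $d$-torsion-free if and only if it is $\noell$-torsion-free for every prime $\noell \mid d$ (an element killed by $d$ is killed by the product of the distinct primes dividing its order, each of which divides $d$), and that the $\noell$-saturation for varying $\noell\mid d$ can be built up independently, so that the smallest group making the quotient $d$-torsion-free is obtained by making it $\noell$-torsion-free for each such $\noell$. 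I would phrase this as: $V$ is $d$-saturated iff $d\cdot(K^\times/V)$-torsion vanishes iff $\noell\cdot(K^\times/V)$-torsion vanishes for all $\noell\mid d$ iff $V$ is $\noell$-saturated for all such $\noell$.

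For~(3), the statement ``$V$ is not $\noell$-saturated iff there exists $\alpha\in K^\times\setminus V$ with $\alpha^\noell\in V$'' unwinds directly: the quotient $K^\times/V$ has nontrivial $\noell$-torsion precisely when some $\alpha\notin V$ has $\alpha^\noell\in V$, and since $\noell$ is prime, the cyclic group $\langle V,\alpha\rangle/V$ then has order exactly $\noell$, so $\noell\mid[\langle V,\alpha\rangle:V]$. For~(4), under the hypothesis that $V$ contains the $\noell$-torsion $\mu_\noell$ of $K^\times$, I would show the two conditions ``$V$ is $\noell$-saturated'' and ``$V\cap(K^\times)^\noell = V^\noell$'' coincide. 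The inclusion $V^\noell\subseteq V\cap(K^\times)^\noell$ is automatic; the content is the reverse inclusion. If $V$ is $\noell$-saturated and $v = \beta^\noell \in V$ with $\beta\in K^\times$, then $\beta^\noell\in V$ forces $\beta\in V$ (else $\beta$ would witness non-saturation by~(3)), hence $v\in V^\noell$. Conversely, if $V\cap(K^\times)^\noell = V^\noell$ and $\alpha\in K^\times$ has $\alpha^\noell\in V$, then $\alpha^\noell\in V\cap(K^\times)^\noell = V^\noell$, so $\alpha^\noell = v^\noell$ for some $v\in V$; then $(\alpha/v)^\noell = 1$, so $\alpha/v\in\mu_\noell\subseteq V$ by hypothesis, whence $\alpha\in V$, and by~(3) again $V$ is $\noell$-saturated.

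The main obstacle, such as it is, lies in~(4): one must be careful that the hypothesis $\mu_\noell\subseteq V$ is exactly what is needed to bridge ``$\alpha^\noell=v^\noell$'' and ``$\alpha\in V$'', since without it the quotient $\alpha/v$ could be a genuine $\noell$-th root of unity not lying in $V$. Everything else is a routine manipulation of finitely generated abelian groups, so I would keep those parts brief and spend a sentence or two making the role of $\mu_\noell$ explicit in part~(4).
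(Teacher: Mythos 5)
Your proposal is correct and follows essentially the same route as the paper: part (1) is the same one-line observation, parts (2) and (3) are the same routine unwindings (which the paper simply declares trivial), and part (4) is the paper's argument verbatim up to the order of the two implications, with the role of the $\noell$-torsion hypothesis made explicit exactly as the paper uses it.
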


\begin{proof}
  (1): Let $W$ be the $d$-saturation of $V$ and let $\alpha \in K^\times$ with $\alpha^d = 1$.
  As~$K^\times / W$ is $d$-torsion-free, this implies $\alpha \in W$.
  Statements (2) and (3) are trivial.
  
  For (4), let us assume that $V \cap (K^\times)^\noell = V^\noell$.
  Now let $\alpha \in K$ with $\alpha^\noell \in V$.
  Thus we have~$\alpha^\noell \in V \cap (K^\times)^\noell = V^\noell$ and there exists $\beta \in V$
  with $\alpha^\noell = \beta^\noell$. Since $V$ contains the $\noell$-torsion elements of $K^\times$
  this implies $\alpha \in V$.
  Now assume that $V$ is $\noell$-saturated and $\alpha \in V \cap (K^\times)^\noell$. Thus
  there exists $\beta \in K^\times$ with $\beta^\noell = \alpha$. As $V$ is $\noell$-saturated this implies
  $\beta \in V$ and $\alpha = \beta^\noell \in V^\noell$.
\end{proof}

Thus, from now on we will assume that $d = \noell$ is a prime.
As there exists a polynomial time algorithm to determine the irreducible
factors and hence the roots of polynomials over $K$ (see \cite{Lenstra1983}), we may
also assume that $V$ contains the $\noell$-torsion of $K^\times$.

The previous lemma makes it clear that the key to saturation is the computation
of $V \cap (K^\times)^\noell$.
To this end, we want to detect global $\noell$-th powers by using local information.
This is used for example in the class and unit group computation of number
fields (\cite[Section 5.7]{PohstZassenhaus1989}) or the number field sieve
(\cite{Adleman1991}).
The building block is the following special case of the Grunwald--Wang theorem,
see~\cite[Chapter~X]{ArtinTate2009} or~\cite[Chapter IX,
\S{}1]{NeukirchSchmidtWingberg2008}. Note that since the exponent is prime,
there is no obstruction to the Hasse principle for $\noell$-th powers.
In the following, for a non-zero prime ideal $\p$ of $\order_K$ we will denote by
$K_\p$ the $\p$-adic completion of $K$, by $v_\p$ the $\p$-adic valuation and by $k_\p = \order_K/\p \cong \order_{K_\p}/\p \order_{K_\p}$
the residue field at $\p$.

\begin{theorem}[Grunwald--Wang]\label{thm:gw}
  For every finite set $S$ of primes of~$K$, the canonical map
  \[ K^\times / (K^\times)^\noell \longrightarrow \prod_{\p \not \in S}
  K_\p^{\times}/(K_\p^\times)^\noell \]
  is injective.
\end{theorem}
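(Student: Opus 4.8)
The plan is to prove the equivalent statement that an element $a\in K^\times$ which becomes an $\noell$-th power in $K_\p^\times$ for every $\p\notin S$ is already an $\noell$-th power in $K^\times$; this is exactly the injectivity of the displayed map.

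I would argue by contradiction, assuming $a\notin(K^\times)^\noell$. Since $\noell$ is prime, $x^\noell-a$ is irreducible over $K$ (for a prime exponent, the only extra obstruction to irreducibility of $x^n-a$, namely $4\mid n$, never occurs), so $L:=K(\alpha)$ with $\alpha^\noell=a$ has degree $\noell$; note it need not be Galois, as $K$ may not contain $\mu_\noell$. Let $N$ be the Galois closure of $L/K$, with $G=\Gal(N/K)$ and $H=\Gal(N/L)$, so $[G:H]=\noell>1$ and $H\subsetneq G$; after enlarging $S$ by the finitely many primes ramifying in $N$, I may assume every $\p\notin S$ is unramified in $N$.

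The key step is the translation of the local hypothesis into splitting behaviour. For $\p\notin S$, a root of $x^\noell-a$ in $K_\p$ gives a $K_\p$-algebra map $L\otimes_K K_\p\to K_\p$; decomposing $L\otimes_K K_\p\cong\prod_{\PP\mid\p}L_\PP$ this forces $L_\PP=K_\p$ for some prime $\PP$ of $L$, i.e.\ $L$ has a degree-one prime above $\p$. For $N/K$ Galois and $\p$ unramified, this happens precisely when the Frobenius class of $\p$ lies in $\bigcup_{g\in G}gHg^{-1}$. By hypothesis this holds for all but finitely many $\p$, so Chebotarev's density theorem forces $\bigl|\bigcup_{g\in G}gHg^{-1}\bigr|=|G|$. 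But $H$ has at most $[G:H]$ conjugates, each of size $|H|$ and all containing the identity, so $\bigl|\bigcup_{g}gHg^{-1}\bigr|\le 1+[G:H](|H|-1)=1+|G|-[G:H]<|G|$ whenever $[G:H]>1$ --- the elementary fact that a proper subgroup cannot cover a finite group by conjugates. This contradiction yields $a\in(K^\times)^\noell$.

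I do not expect a serious obstacle here: the content is Chebotarev's theorem together with the covering lemma, and the only care needed is (i) recording that $x^\noell-a$ is genuinely irreducible over $K$ when $a$ is not an $\noell$-th power, for any prime $\noell$, and (ii) replacing $L$ by its Galois closure before applying Chebotarev, which is forced by $\mu_\noell\not\subseteq K$ in general. The rest is routine bookkeeping with completions and decomposition groups.
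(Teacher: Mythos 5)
Your proof is correct. Note that the paper does not actually prove Theorem~\ref{thm:gw}: it simply cites \cite[Chapter~X]{ArtinTate2009} and \cite[Chapter IX, \S 1]{NeukirchSchmidtWingberg2008}, with only the remark that for prime exponent there is no obstruction to the Hasse principle. What you supply is the standard self-contained argument for the prime-exponent case: irreducibility of $x^\noell - a$ for $\noell$ prime when $a \notin (K^\times)^\noell$, the translation of ``$a$ is a local $\noell$-th power at $\p$'' into ``$L=K(\alpha)$ has a degree-one prime above $\p$'' via $L\otimes_K K_\p \cong \prod_{\PP\mid\p} L_\PP$, the identification of that condition (for $\p$ unramified in the Galois closure $N$) with the Frobenius class meeting $\bigcup_g gHg^{-1}$, and finally Chebotarev combined with the counting bound $\bigl|\bigcup_g gHg^{-1}\bigr| \le 1 + [G:H](|H|-1) < |G|$ for a proper subgroup $H$. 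All steps check out, including the two points you flag as needing care: the irreducibility criterion for $x^n-a$ has no extra condition when $n$ is an odd prime or $2$, and passing to the Galois closure is indeed necessary since $L/K$ need not be normal when $\mu_\noell \not\subseteq K$. Enlarging $S$ by the ramified primes is harmless since the statement is for arbitrary finite $S$ and enlarging $S$ only weakens the hypothesis on $a$. This argument is essentially what the cited references contain for the prime-exponent case (where the special ``Wang'' obstruction at powers of $2$ cannot occur), so your proof is a legitimate replacement for the citation rather than a genuinely different theorem.
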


To detect local powers, we make use of the following well-known statement.

\begin{proposition}\label{prop:gw1}
  Let $d \in \Z_{>0}$.
  Assume that $\p$ is a non-zero prime ideal of $\mathcal O_K$ with $d \not\in \p$ and let $\varpi \in K$ be a local uniformizer at $\p$.
  Then the map
  \[
    K_\p^\times/(K_\p^\times)^d \longrightarrow \Z/d\Z \times
  k_\p^\times/(k_\p^\times)^d, \quad \bar x \longmapsto (\overline{v}, \overline{x\varpi^{-v}})\text{ where $v = v_\p(x)$},
  \]
  is an isomorphism. In particular if $V \subseteq K^\times$ is a subgroup with $v_\p(\alpha)$ divisible by~$d$ for all $\alpha \in V$, then
  \[ \ker(V/V^{d} \to K_\p/(K_\p^\times)^d) = \ker(V/V^{d}
  \longrightarrow k_\p^{\times}/(k_\p^\times)^d).\]
\end{proposition}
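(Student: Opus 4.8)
The plan is to read off the isomorphism from the structure of the local field $K_\p$, the only substantive input being that $d$ is a unit of the residue field, i.e. $d\notin\p$. First I would use the chosen uniformizer $\varpi$ to split off the valuation: the map $x\mapsto(v_\p(x),\,x\varpi^{-v_\p(x)})$ is a group isomorphism $K_\p^\times\xrightarrow{\ \sim\ }\Z\times\order_{K_\p}^\times$ (its inverse sends $(n,u)$ to $\varpi^n u$), and hence identifies $K_\p^\times/(K_\p^\times)^d$ with $\Z/d\Z\times\order_{K_\p}^\times/(\order_{K_\p}^\times)^d$. So everything reduces to showing that reduction modulo $\p$ induces an isomorphism $\order_{K_\p}^\times/(\order_{K_\p}^\times)^d\xrightarrow{\ \sim\ }k_\p^\times/(k_\p^\times)^d$.

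For that, the key step — and the only place where the hypothesis $d\notin\p$ is used — is to show that every principal unit $u\in 1+\p\order_{K_\p}$ is a $d$-th power in $\order_{K_\p}^\times$. I would deduce this from Hensel's lemma applied to $f(X)=X^d-u$: one has $f(1)=1-u\in\p\order_{K_\p}$ while $f'(1)=d$ is a unit of $\order_{K_\p}$ precisely because $d\notin\p$, so $f$ has a root $w\equiv 1\pmod\p$, giving $u=w^d$. Granting this, surjectivity of the reduction map $\order_{K_\p}^\times\to k_\p^\times$ yields surjectivity on $d$-th power quotients; and if $u\in\order_{K_\p}^\times$ has reduction in $(k_\p^\times)^d$, then lifting a $d$-th root $\bar s$ of the reduction to a unit $s$ and replacing $u$ by $us^{-d}$ lands us in $1+\p\order_{K_\p}\subseteq(\order_{K_\p}^\times)^d$, so $u\in(\order_{K_\p}^\times)^d$, which gives injectivity. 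Composing the two isomorphisms and tracing an element $x=\varpi^{v_\p(x)}\cdot\bigl(x\varpi^{-v_\p(x)}\bigr)$ through them produces exactly the asserted formula $\bar x\mapsto\bigl(\overline{v_\p(x)},\,\overline{x\varpi^{-v_\p(x)}}\bigr)$, and it is an isomorphism.

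For the final ``in particular'', once the isomorphism $K_\p^\times/(K_\p^\times)^d\cong\Z/d\Z\times k_\p^\times/(k_\p^\times)^d$ is established, the hypothesis that $d\mid v_\p(\alpha)$ for all $\alpha\in V$ says precisely that the image of $V/V^d$ has trivial first component, hence lies in the second factor via $\bar\alpha\mapsto\overline{\alpha\varpi^{-v_\p(\alpha)}}$; so the kernel of $V/V^d\to K_\p^\times/(K_\p^\times)^d$ coincides with the kernel of $V/V^d\to k_\p^\times/(k_\p^\times)^d$, as claimed. I do not expect a genuine obstacle: the statement is a routine consequence of the structure of the unit group of a local field, and the one point deserving care is the Hensel argument above, equivalently the fact that $1+\p\order_{K_\p}$ is uniquely $d$-divisible whenever the residue characteristic does not divide $d$.
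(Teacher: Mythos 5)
Your proof is correct, and its skeleton matches the paper's: both first use the uniformizer to split $K_\p^\times\cong\Z\times\order_{K_\p}^\times$ and reduce everything to the unit part. The difference is in how the central fact is justified. The paper invokes the structure theory of local unit groups from Neukirch: it uses the decomposition $\order_{K_\p}^\times\cong k_\p^\times\times(1+\p\order_{K_\p})$ together with $1+\p\order_{K_\p}\cong\Z/q^a\Z\times\Z_q^b$ (with $q$ the residue characteristic and $\gcd(d,q)=1$) to conclude that the principal units are $d$-divisible, so that $(1+\p\order_{K_\p})/(1+\p\order_{K_\p})^d=1$. You instead prove directly that every principal unit is a $d$-th power by Hensel's lemma applied to $X^d-u$, using that $f'(1)=d$ is a unit of $\order_{K_\p}$ precisely because $d\notin\p$, and then deduce that reduction modulo $\p$ induces an isomorphism on $d$-th power quotients by separate surjectivity and injectivity arguments. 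Your route is more elementary and self-contained (it avoids both the Teichm\"uller splitting and the structure theorem for principal units), at the cost of a slightly longer argument; the paper's citation-based proof is shorter but leans on heavier standard machinery. One cosmetic remark: in your closing sentence you only need $d$-divisibility of $1+\p\order_{K_\p}$ (i.e.\ that it lies in $(\order_{K_\p}^\times)^d$), not \emph{unique} $d$-divisibility, though Hensel's lemma does in fact give uniqueness of the root congruent to $1$. The handling of the ``in particular'' clause is the same in both treatments and is fine.
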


\begin{proof}
  This follows from the properties of unit groups in local fields (see~\cite[Chapter II, \S{}5]{Neukirch}) as follows:
  The map $K_\p^\times \cong \Z \times \order_{K_\p}^\times, \ x \mapsto (v_\p(x), x\varpi^{-v_\p(x)})$ is an isomorphism and
  the group $\order_{K_\p}^\times$ decomposes as $k_\p^\times \times (1 + \p \order_{K_\p}$).
  Denote by $q \in \Z$ the prime lying below $\p$.
As $1 + \p\order_{K_\p} \cong \Z/q^a\Z \times \Z_q^b$ for
integers $a, b \in \Z_{\geq 0}$ and $\gcd(d, q) = 1$, the group $1 + \p
 \order_{K_\p}$ is $d$-divisible and therefore $(1+\p
\order_{K_\p})/(1 + \p \order_{K_\p})^d = 1$.
\end{proof}

For a prime ideal $\p$, we now fix a local uniformizer $\varpi \in K$ at $\p$ and set
\[ \chi_\p \colon K_\p^\times/(K_\p^\times)^d \longrightarrow
\Z/d\Z \times k_\p^\times/(k_\p^\times)^d, \quad \bar x \longmapsto (\overline{v}, \overline{x\varpi^{-v}})\text{ where $v = v_\p(x)$}. \] 
Note that for every subgroup $V \subseteq K^\times$, the map $\chi_\p$
induces a map $V/V^d \to \Z/d\Z \times k^\times_\p/(k_\p^\times)^d$, which by abuse of notation we will also denote by $\chi_\p$.

\begin{proposition}\label{prop:gw2}
  Assume that $S$ is a set of prime ideals of $\mathcal O_K$ and for $d \in \Z_{>0}$ the canonical map
  \[ K^\times / (K^\times)^d \longrightarrow \prod_{\p\not\in S}
  K_\p^{\times}/(K_\p^\times)^d \]
  is injective.
  Then for every subgroup group $V \subseteq K^\times$ we have
  \[ V \cap (K^\times)^d = \bigcap_{\p \not\in S} \ker(V
  \to K_\p^\times /(K_\p^\times)^d), \]
      and if $S$ contains the primes dividing $d$ then
      \[ (V \cap (K^\times)^d)/ V^d = \bigcap_{\p\not\in S}\ker(
      \chi_\p \colon V/V^d \to \Z/d\Z \times k_\p^\times/(k_\p^\times)^d). \]
      In particular these equalities hold for $S = \{ \p \mid d \in \p\}$ and $d = \noell$ a prime.
\end{proposition}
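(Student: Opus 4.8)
The plan is to derive both identities directly from the injectivity hypothesis, using Proposition~\ref{prop:gw1} to pass between the two models of the local quotients, and then to obtain the ``in particular'' clause by invoking Theorem~\ref{thm:gw}.

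For the first equality I would simply unwind the definitions. Fix $x\in V$. Then $x\in V\cap(K^\times)^d$ if and only if the class of $x$ in $K^\times/(K^\times)^d$ is trivial; since the canonical map $K^\times/(K^\times)^d\to\prod_{\p\notin S}K_\p^\times/(K_\p^\times)^d$ is injective by hypothesis, this holds if and only if the image of $x$ in $K_\p^\times/(K_\p^\times)^d$ is trivial for every $\p\notin S$, that is, $x\in\ker(V\to K_\p^\times/(K_\p^\times)^d)$ for all such $\p$. Intersecting over $\p\notin S$ gives $V\cap(K^\times)^d=\bigcap_{\p\notin S}\ker(V\to K_\p^\times/(K_\p^\times)^d)$. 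This step uses nothing beyond the stated injectivity.

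For the second equality, note first that $V^d\subseteq V\cap(K^\times)^d$ and that $V^d$ lies in every kernel $\ker(V\to K_\p^\times/(K_\p^\times)^d)$, because $V^d\subseteq(K^\times)^d$ maps to $1$ locally; in particular $\chi_\p$ does induce a homomorphism $V/V^d\to\Z/d\Z\times k_\p^\times/(k_\p^\times)^d$ as in the statement. Since all the subgroups occurring in the first identity contain $V^d$, passing to the quotient by $V^d$ commutes with the intersection, so
\[
  (V\cap(K^\times)^d)/V^d=\bigcap_{\p\notin S}\ker\bigl(V/V^d\to K_\p^\times/(K_\p^\times)^d\bigr).
\]
Now I would use the assumption that $S$ contains the primes dividing $d$: for $\p\notin S$ we have $d\notin\p$, so Proposition~\ref{prop:gw1} tells us that $\chi_\p\colon K_\p^\times/(K_\p^\times)^d\to\Z/d\Z\times k_\p^\times/(k_\p^\times)^d$ is an isomorphism, whence $\ker(V/V^d\to K_\p^\times/(K_\p^\times)^d)$ equals the kernel of the composite $\chi_\p\colon V/V^d\to\Z/d\Z\times k_\p^\times/(k_\p^\times)^d$. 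Substituting this into the display yields the second identity.

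For the final assertion I would take $S=\{\p\mid d\in\p\}$ with $d=\noell$ a prime; this $S$ is finite and contains the primes dividing $d$, so Theorem~\ref{thm:gw} applied to it gives exactly the injectivity needed, and both equalities apply. I do not expect a genuine obstacle here: the argument is essentially bookkeeping, and the only points needing care are checking that all kernels in sight contain $V^d$ (so the quotient may be formed termwise inside the intersection) and invoking Proposition~\ref{prop:gw1} only at primes $\p\nmid d$, which is precisely what the hypothesis on $S$ ensures.
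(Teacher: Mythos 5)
Your proof is correct and follows essentially the same route as the paper's: both derive the first equality directly from the injectivity hypothesis, pass to the quotient by $V^d$ (which lies in every kernel) to get the intersection of kernels on $V/V^d$, and then invoke Proposition~\ref{prop:gw1} at primes $\p\notin S$ (where $d\notin\p$) to replace $K_\p^\times/(K_\p^\times)^d$ by $\Z/d\Z\times k_\p^\times/(k_\p^\times)^d$, with Theorem~\ref{thm:gw} supplying the injectivity for the final clause. Your write-up is merely a little more explicit about why the quotient commutes with the intersection.
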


\begin{proof}
  For the first equality note that the assumption implies
  \[ V \cap (K^\times)^{d} = V \cap \bigcap_{\p\not\in S} \ker(K^\times \to K_\p^\times/(K_\p^\times)^{d}) = \bigcap_{\p\not\in S} \ker(V \to K_\p^\times/(K_\p^\times)^{d}). 
  \]
  Thus, using Proposition~\ref{prop:gw1}, we obtain
  \begin{align*} (V \cap (K^\times)^{d})/V^d &= (V \cap \bigcap_{\p\not\in
    S}\ker(K^\times \to K_\p^\times/(K_\p^\times)^{d}))/V^d \\
  &= \bigcap_{\p\not\in S}\ker(V \to K_\p^\times/(K_\p^\times)^{d})/V^d \\
                             &= \bigcap_{\p\not\in S}\ker(V/V^d \to K_\p^\times/(K_\p^\times)^{d}) \\
  &= \bigcap_{\p \not\in S}\ker(\chi_\p \colon V/V^d \to \Z/d\Z \times k_\p^\times/(k_\p^\times)^{d}).
  \end{align*}
  The final statement follows from Theorem~\ref{thm:gw}.
\end{proof}

\begin{corollary}\label{cor:gw}
  Assume that $V \subseteq K^\times$ is finitely generated, and let~$p$ be a
  prime number.
  Then there exists a constant $c_0 \in \R_{>0}$ such that
  \[ (V \cap (K^\times)^\noell)/V^\noell = \bigcap_{p \not\in\p,\, \Nm(\p) \leq c_0} \ker(\chi_\p \colon V/V^\noell \to \Z/p\Z \times k_\p^\times/(k_\p^\times)^{\noell}). \]
\end{corollary}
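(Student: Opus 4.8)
The plan is to derive the corollary directly from Proposition~\ref{prop:gw2} by observing that the intersection over all primes appearing there is in fact a finite intersection in disguise. First I would record the only structural input needed: since $V\subseteq K^\times$ is finitely generated, it is isomorphic to $\Z^r$ times a finite group, so $V/V^\noell$ is a \emph{finite} abelian group. For a real parameter $T>0$ set
\[
  W_T \;=\; \bigcap_{\substack{\noell\notin\p\\ \Nm(\p)\le T}} \ker\bigl(\chi_\p\colon V/V^\noell \longrightarrow \Z/\noell\Z\times k_\p^\times/(k_\p^\times)^\noell\bigr),
\]
a subgroup of $V/V^\noell$. Enlarging $T$ means intersecting over more primes, so the $W_T$ form a non-increasing chain of subgroups of the finite group $V/V^\noell$; hence their cardinalities form a non-increasing sequence of positive integers, which is eventually constant. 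Consequently there exists $c_0\in\R_{>0}$ such that $W_T=W_{c_0}$ for all $T\ge c_0$, and since $W_T\supseteq W_{c_0}$ for $T\le c_0$ as well, we get $\bigcap_{T>0}W_T=W_{c_0}$.

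It then remains to identify $\bigcap_{T>0}W_T$ with the left-hand side of the claim. For this I would invoke Proposition~\ref{prop:gw2} in the case $d=\noell$ a prime and $S=\{\p\mid\noell\in\p\}$, which is one of the cases listed there (the required Grunwald--Wang injectivity being supplied by Theorem~\ref{thm:gw}). That proposition gives
\[
  (V\cap(K^\times)^\noell)/V^\noell \;=\; \bigcap_{\noell\notin\p}\ker\chi_\p \;=\; \bigcap_{T>0}W_T \;=\; W_{c_0},
\]
which is precisely the asserted equality for this value of $c_0$.

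I do not anticipate a genuine obstacle: everything beyond Proposition~\ref{prop:gw2} is formal, the essential point being that finiteness of $V/V^\noell$ reduces the detection of global $\noell$-th powers to the finitely many primes $\p$ that actually shrink the chain $(W_T)_T$. The statement is purely qualitative, so no effective value of $c_0$ is needed; producing one would amount to bounding $[(V\cap(K^\times)^\noell):V^\noell]$ (a divisor of $\#(V/V^\noell)$) together with an effective Chebotarev-type estimate guaranteeing enough primes $\p$ of small norm in the relevant Frobenius classes, but that is a separate matter from the present assertion.
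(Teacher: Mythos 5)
Your proof is correct and follows essentially the same route as the paper: the paper's argument is simply that $V/V^\noell$ is a finite-dimensional $\F_\noell$-vector space, hence Artinian, so the descending chain of intersections stabilizes, and the identification of the full intersection with $(V\cap(K^\times)^\noell)/V^\noell$ is Proposition~\ref{prop:gw2} with $S=\{\p\mid \noell\in\p\}$. You have merely spelled out the Artinian/stabilization step more explicitly.
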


\begin{proof}
  As $V$ is finitely generated, $V/V^\noell$ is a finite dimensional $\F_\noell$-vector space.
  Thus $V/V^\noell$ is Artinian and the claim follows from Proposition~\ref{prop:gw2}.
\end{proof}

\begin{proposition}\label{prop:gw3}
  Let $c \in \R_{>0}$, $V \subseteq K^\times$ finitely generated, let~$p$ be a
  prime number and let~$m$ be
  the $\F_\noell$-dimension of the intersection
  \[ \bigcap_{p \not\in \p,\, \Nm(\p) \leq c} \ker(\chi_\p \colon V/V^\noell \to
  \Z/p\Z \times k_\p^\times/(k_\p^\times)^{\noell}) \subseteq V/V^\noell,\]
  and let~$\alpha_1,\dots,\alpha_m\in V$ be such
  that~$\overline{\alpha_1},\dotsc,\overline{\alpha_m}$ is an~$\F_\noell$-basis
  of the intersection.
  \begin{enumerate}
    \item
      If $m = 0$, then $V$ is $\noell$-saturated.
    \item
      Assume that $V$ is not $\noell$-saturated.
      Then if $c$ is sufficiently large, there exists $1 \leq i \leq m$ such that $\alpha_i$ is a $\noell$-th power.
    \item
      Assume that $V$ is $\noell$-saturated.
      Then for $c$ sufficiently large we have $m = 0$.
  \end{enumerate}
\end{proposition}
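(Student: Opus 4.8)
The plan is to treat the three assertions of Proposition~\ref{prop:gw3} in turn, using Corollary~\ref{cor:gw} as the main engine. For part~(1), suppose $m=0$; then $(V\cap(K^\times)^\noell)/V^\noell$ is contained in the intersection appearing in the statement, which in turn contains the full intersection over \emph{all} primes $\p$ not dividing $p$ of bounded norm, and by Corollary~\ref{cor:gw} this latter intersection equals $(V\cap(K^\times)^\noell)/V^\noell$; since for any finite $c$ the intersection with $\Nm(\p)\le c$ surjects onto the infinite one, $m=0$ forces $(V\cap(K^\times)^\noell)/V^\noell=0$, i.e.\ $V\cap(K^\times)^\noell=V^\noell$, which is exactly $\noell$-saturation by Lemma~\ref{lem:sat}~(4) (recall we have arranged that $V$ contains the $\noell$-torsion of $K^\times$).

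For part~(3): if $V$ is $\noell$-saturated, then $(V\cap(K^\times)^\noell)/V^\noell=0$ again by Lemma~\ref{lem:sat}~(4). By Corollary~\ref{cor:gw} there exists a constant $c_0$ such that the intersection over $p\not\in\p$, $\Nm(\p)\le c_0$, already equals this trivial group. Since intersecting over \emph{more} primes (i.e.\ taking $c\ge c_0$) can only shrink the space, for every $c\ge c_0$ the intersection is $0$, hence $m=0$. This is the easy direction.

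Part~(2) is the substantive one. Assume $V$ is not $\noell$-saturated. By Lemma~\ref{lem:sat}~(3)--(4) there exists $\beta\in(K^\times)^\noell\cap V$ with $\beta\notin V^\noell$; write $\overline{\beta}\in(V\cap(K^\times)^\noell)/V^\noell$ for its nonzero class, and extend to an $\F_\noell$-basis $\overline{\beta}=\overline{\gamma_1},\dots,\overline{\gamma_r}$ of the (finite-dimensional, by Corollary~\ref{cor:gw}) space $(V\cap(K^\times)^\noell)/V^\noell$. By Corollary~\ref{cor:gw}, for $c$ large enough the intersection over $p\not\in\p$, $\Nm(\p)\le c$, is exactly $(V\cap(K^\times)^\noell)/V^\noell$, so once $c$ is that large the basis $\overline{\alpha_1},\dots,\overline{\alpha_m}$ of that same subspace has $m=r\ge 1$, and every $\alpha_i$ lies in $V\cap(K^\times)^\noell$, hence is a $\noell$-th power in $K^\times$; in particular $\alpha_1$ works. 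The one point requiring care is that the statement quantifies $m$ and the $\alpha_i$ \emph{before} fixing $c$: the cleanest phrasing is that for $c$ sufficiently large the displayed intersection stabilises to $(V\cap(K^\times)^\noell)/V^\noell$, at which point any choice of basis has every representative a $\noell$-th power.

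The only real obstacle is bookkeeping around Corollary~\ref{cor:gw}: it asserts the existence of \emph{a} $c_0$ making the finite intersection equal to the target, and one must observe that the intersection is monotone nonincreasing in $c$ (adding primes only imposes more conditions), so that equality persists for all $c\ge c_0$. With that monotonicity in hand, all three parts follow formally, and no further number-theoretic input beyond Corollary~\ref{cor:gw} and Lemma~\ref{lem:sat} is needed.
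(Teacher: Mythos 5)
Your proof is correct and follows essentially the same route as the paper: both rest on the observation that $(V\cap(K^\times)^\noell)/V^\noell$ is always contained in the displayed intersection (giving part (1)), and on Corollary~\ref{cor:gw} forcing equality for $c$ large enough, combined with Lemma~\ref{lem:sat} (giving parts (2) and (3)). The extra care you take with the monotonicity of the intersection in $c$ and with the standing assumption that $V$ contains the $\noell$-torsion is a welcome expansion of the paper's rather terse argument.
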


\begin{proof}
  First note that by Lemma~\ref{lem:sat}~(3) the group $V$ is $\noell$-saturated if and only if~$V \cap (K^\times)^\noell = V^\noell$.
  Denote the intersection of the kernels by $W/V^\noell$.
  (1): Follows since $(V \cap (K^\times)^\noell)/V^\noell \subseteq W/V^\noell$.
  (2)~and~(3): For $c = c_0$ as in Corollary~\ref{cor:gw} we have $W/V^\noell = (V \cap (K^\times)^\noell)/V^\noell$. Now apply Lemma~\ref{lem:sat}.
\end{proof}

\begin{algorithm}\label{alg:sat}~
  \begin{itemize}
    \item Input: $V\subset K^\times$ finitely generated.
    \item Output: statement that~$V$ is $\noell$-saturated or an element
      $\alpha$ with $[\langle V, \alpha \rangle : V]$ divisible by $\noell$.
  \end{itemize}
  \begin{enumerate}
    \item
      Let $c \in \R_{>0}$ be any constant.
    \item
      Determine an $\F_\noell$-basis $\overline{\alpha_1},\dotsc,\overline{\alpha_m}$ of
      \[ \bigcap_{p \not\in \p,\, \Nm(\p) \leq c} \ker(V/V^\noell \to \Z/p\Z \times k_\p^\times/(k_\p^\times)^{\noell}).\]
    \item
      If $m = 0$, return that $V$ is $\noell$-saturated.
    \item
      If $m > 0$, test whether one of the elements $\alpha_i$
      is a $\noell$-th power. If there exists $\alpha$ with $\alpha^\noell = \alpha_i$, return $\alpha$.
    \item
      Replace $c$ by $2c$ and go to step~(2).
  \end{enumerate}
\end{algorithm}

\begin{theorem}
  Algorithm~\ref{alg:sat} is correct.
\end{theorem}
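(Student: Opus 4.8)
The plan is to verify that the algorithm terminates and that its output is correct in both possible cases. Termination is the only subtle point, so I would dispose of correctness-upon-output first. If the algorithm halts at step~(3) reporting that $V$ is $\noell$-saturated, then $m=0$ for the current value of $c$, and by Proposition~\ref{prop:gw3}~(1) the group $V$ is indeed $\noell$-saturated. If instead it halts at step~(4) returning $\alpha$, then by construction $\alpha^\noell = \alpha_i \in V$ while the test has certified $\alpha\notin V$ (we may test membership in the finitely generated group $V$ effectively); Lemma~\ref{lem:sat}~(3) then guarantees that $\noell$ divides $[\langle V,\alpha\rangle : V]$, which is exactly the claimed output. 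So in either exit the answer is valid; what remains is to show that one of these two exits is always reached.

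For termination I would split into the two cases according to whether $V$ is $\noell$-saturated. Since $V$ is finitely generated, Corollary~\ref{cor:gw} produces a constant $c_0$ such that for every $c\ge c_0$ the intersection computed in step~(2) equals $(V\cap (K^\times)^\noell)/V^\noell$; the loop multiplies $c$ by $2$ each pass, so after finitely many iterations we have $c\ge c_0$. If $V$ is $\noell$-saturated, then by Lemma~\ref{lem:sat}~(3) we have $V\cap(K^\times)^\noell = V^\noell$, so for $c\ge c_0$ the intersection is trivial, $m=0$, and the algorithm halts at step~(3); this is Proposition~\ref{prop:gw3}~(3). If $V$ is not $\noell$-saturated, then for $c\ge c_0$ the intersection equals $(V\cap(K^\times)^\noell)/V^\noell$, which is nonzero, so $m>0$; moreover every basis element $\alpha_i$ satisfies $\overline{\alpha_i}\in (V\cap(K^\times)^\noell)/V^\noell$, i.e. $\alpha_i$ is a global $\noell$-th power in $K^\times$, so the test in step~(4) succeeds and the algorithm halts returning the corresponding root $\alpha$ (note $\alpha\notin V$ since $\overline{\alpha_i}\ne 0$ in $V/V^\noell$). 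This is the content of Proposition~\ref{prop:gw3}~(2).

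The main obstacle I anticipate is purely bookkeeping: making sure the reduction preceding the algorithm is in force, namely that $d=\noell$ is prime and that $V$ contains the $\noell$-torsion of $K^\times$ (so that Lemma~\ref{lem:sat}~(4) and the interpretation of $V\cap(K^\times)^\noell$ via the kernels of the $\chi_\p$ are legitimate), and that the effective root-finding and the effective membership test in a finitely generated multiplicative subgroup of $K^\times$ are available — both of which follow from~\cite{Lenstra1983} and standard linear algebra over $\Z$ on the lattice of valuation/logarithm vectors. Once these are in place, the argument is just a matter of assembling Corollary~\ref{cor:gw} and Proposition~\ref{prop:gw3} as above; no new ideas are needed beyond the doubling strategy for $c$, which is what turns the non-effective existence of $c_0$ into a terminating procedure.
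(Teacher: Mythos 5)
Your proposal is correct and follows the same route as the paper, whose entire proof is ``Follows immediately from Proposition~\ref{prop:gw3}''; you have simply spelled out the case analysis (Proposition~\ref{prop:gw3}~(1) for the $m=0$ exit, (2)--(3) together with Corollary~\ref{cor:gw} and the doubling of $c$ for termination, and Lemma~\ref{lem:sat}~(3) for the returned element). The extra observations you make --- that any root $\alpha$ found automatically satisfies $\alpha\notin V$ because $\overline{\alpha_i}\neq 0$ in $V/V^\noell$, and that the standing reductions ($d=\noell$ prime, $V$ containing the $\noell$-torsion) must be in force --- are accurate and consistent with the paper's setup.
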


\begin{proof}
  Follows immediately from Proposition~\ref{prop:gw3}.
\end{proof}

By iterating Algorithm~\ref{alg:sat} one can compute the $\noell$-saturation of $V$ and by repeating this for every prime $\noell$ dividing $d$, we obtain the $d$-saturation of $V$.

\subsubsection*{Complexity}
In this section we prove polynomial time complexity bounds for the $S$-units and
saturation algorithms.
We first need an effective version of the Grunwald--Wang theorem. Related work
is presented in~\cite{Wang2015}, but the statement we
need is different.

\begin{theorem}[Effective Grunwald--Wang]\label{thm:gruneff}
  Assume GRH.
  Let~$d = \noell^r$ with~$\noell$ prime and~$r\ge 1$.
  Let~$K$ be a number field of degree~$n$, and~$L = K(\zeta_d)$.
  Let~$S$ be a finite set of primes of~$K$, let~$M_S = \prod_{\p\in S}\Nm(\p)$, and
  let~$S_p = S\cup \{\p \mid p\}$.
  Let
  \[
    c_0 = 18d^2\left(2\log|\Delta_K| + 6n\log d + \log M_S\right)^2.
  \]
  Let~$T$ be the set of prime ideals~$\p$ of~$K$ such that
  \begin{itemize}
    \item $\p\notin S_p$,
    \item $\p$ has residue degree~$1$,
    \item $\Nm(\p) \equiv 1\bmod{p}$, and
    \item $\Nm(\p)\le c_0$.
  \end{itemize}
  Let~$\alpha\in K^\times$ be such that all valuations of~$\alpha$ at
  primes~$\p\notin S_p$ are divisible by~$d$ and such that for every~$\p\in T$,
  the image of~$\alpha$ in~$K_\p^\times$ is a~$d$-th power.
  Then~$\alpha \in (L^\times)^d$.
  If in addition~$L/K$ is cyclic, then~$\alpha \in (K^\times)^d$.
\end{theorem}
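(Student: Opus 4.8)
The plan is to reduce the problem to detecting $d$-th powers via Chebotarev-type density estimates for a specific Kummer extension, and then to control the size of the ``witnessing'' set of primes using a GRH-conditional effective Chebotarev theorem. First I would pass to $L = K(\zeta_d)$: over $L$ the group $\mu_d$ of $d$-th roots of unity is rational, so Kummer theory applies cleanly. The element $\alpha$ generates a (possibly trivial) Kummer extension $L(\alpha^{1/d})/L$, which is abelian of exponent dividing $d$. The key observation is that $\alpha \in (L^\times)^d$ if and only if this extension is trivial, and the hypothesis on the valuations of $\alpha$ (divisible by $d$ away from $S_p$) ensures that $L(\alpha^{1/d})/L$ is unramified outside the primes above $S_p$ and the primes above $\ell$. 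I would then invoke the GRH-conditional effective Chebotarev density theorem (e.g. in the form of Lagarias--Odlyzko, or its refinements by Bach and others) to produce, if the extension $L(\alpha^{1/d})/L$ were nontrivial, a prime $\mathfrak{q}$ of $L$ of small norm whose Frobenius is nontrivial; one then descends $\mathfrak{q}$ to a prime $\p$ of $K$ lying in the set $T$ at which $\alpha$ is not a local $d$-th power, contradicting the hypothesis. The numerical constant $c_0 = 18d^2(2\log|\Delta_K| + 6n\log d + \log M_S)^2$ should come out of bounding the discriminant and degree of $L(\alpha^{1/d})$ in terms of $\Delta_K$, $n$, $d$, and $M_S$, and feeding these into the explicit Chebotarev bound.

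More precisely, the main steps are as follows. (i) Reduce to $L$ and set up Kummer theory: $L$ contains $\zeta_d$, and $\alpha$ is a $d$-th power in $L^\times$ iff $L(\alpha^{1/d}) = L$. (ii) Bound the ramification of $L(\alpha^{1/d})/L$: using $v_\p(\alpha) \equiv 0 \pmod d$ for $\p \notin S_p$, show the extension is ramified only above $S_p$, hence its conductor, and therefore $\log|\Delta_{L(\alpha^{1/d})}|$, is controlled by $\log|\Delta_L|$, $[L(\alpha^{1/d}):\Q]$, $\log d$, and $\log M_S$; combine with standard bounds $|\Delta_L| \le |\Delta_K|^{[L:K]} d^{O(n[L:K])}$ and $[L:K] \le \varphi(d) < d$. (iii) Apply the GRH effective Chebotarev theorem to the Galois closure of $L(\alpha^{1/d})/\Q$ (or work relatively over $L$ with the relative effective Chebotarev) to find, whenever $L(\alpha^{1/d}) \ne L$, an unramified degree-one prime $\mathfrak{q}$ of $L$ with $\mathrm{Frob}_{\mathfrak{q}} \ne 1$ and $\Nm_{L/\Q}(\mathfrak{q}) \le c_0$. (iv) Show such $\mathfrak{q}$ restricts to a prime $\p$ of $K$ with $\p \in T$: degree one over $\Q$, $\Nm(\p) = \Nm_{L/\Q}(\mathfrak{q}) \le c_0$, $\p \notin S_p$, and the splitting condition at $\p$ forces $\Nm(\p) \equiv 1 \pmod d$ (this is exactly the condition for $\mu_d \subseteq K_\p$); moreover $\alpha$ is not a $d$-th power in $K_\p^\times$, since a $d$-th root of $\alpha$ would give a $d$-th root in $L_{\mathfrak{q}}$ and kill the Frobenius. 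This contradicts the hypothesis that $\alpha$ is a local $d$-th power at every $\p \in T$, so $L(\alpha^{1/d}) = L$, i.e. $\alpha \in (L^\times)^d$.

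For the final assertion, assume $L/K$ is cyclic. Then I would use Galois descent: write $\alpha = \beta^d$ with $\beta \in L^\times$, and consider the cocycle $\sigma \mapsto \sigma(\beta)/\beta$ for $\sigma \in \Gal(L/K)$; since $\sigma(\beta)^d = \sigma(\alpha) = \alpha = \beta^d$, this takes values in $\mu_d \subseteq L^\times$. When $\Gal(L/K)$ is cyclic this is precisely the setting where the obstruction to $\alpha$ being a $d$-th power already in $K^\times$ — which lives in a piece of $H^2(\Gal(L/K),\mu_d)$ or is measured by the relevant norm map — vanishes; concretely, cyclicity gives $\beta$ up to the $\Gal(L/K)$-action a representative fixed by $\Gal(L/K)$, i.e. in $K^\times$, using Hilbert 90 for $L/K$ together with the fact that $K_\p$ contains $\mu_d$ at the relevant primes. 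This is the classical ``no Grunwald--Wang obstruction in the cyclic case'' phenomenon (as the paper notes, for prime exponent there is no obstruction at all, and more generally the special case obstruction only arises for non-cyclic $2$-power situations).

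The step I expect to be the main obstacle is (ii)--(iii): getting the \emph{explicit} constant $c_0$ in exactly the stated form. This requires a careful, fully explicit bound on the discriminant of $L(\alpha^{1/d})$ in terms of $\Delta_K$, $n$, $d$ and $M_S = \prod_{\p \in S}\Nm(\p)$ — in particular tracking how the primes in $S$ (at which $\alpha$ may have large valuation, but divisible by $d$) and the wild ramification at $\ell$ contribute — and then plugging this into a version of the GRH-conditional effective Chebotarev density theorem with completely explicit constants, of which there are several incompatible normalizations in the literature. Matching the final numerical factor $18$ and the quadratic dependence will require choosing the right form of the prime-counting bound and being careful about whether one counts primes of $L$ or of $K$; the descent in step~(iv) from $L$ back to $K$ is where the factor $\varphi(d)$ versus $d$ and the residue-degree bookkeeping must be handled precisely.
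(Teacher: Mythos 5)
Your proposal follows essentially the same route as the paper's proof: assume $\alpha\notin(L^\times)^d$, form the cyclic Kummer extension $L(\alpha^{1/d})/L$, bound its conductor using the valuation hypothesis (the paper uses a conductor bound of Murty for the wild part and tameness at $S$ for the rest), invoke a GRH-effective Chebotarev/Bach-type bound to produce a small degree-one prime of $L$ with nontrivial Frobenius, descend it to a prime of $K$ lying in $T$, and derive a contradiction, with the cyclic case of the last assertion handled by the classical Artin--Tate descent argument. The only cosmetic difference is that the paper phrases the Chebotarev step via a ray class character of $L$ and Bach's explicit theorem rather than the Galois closure over $\Q$, which is exactly the ``relative'' option you mention.
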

\begin{proof}
  Note that the degree of~$L/K$ is at most~$\varphi(d)$ and the
  discriminant~$\Delta_L$ of~$L$ satisfies
  \[
    \Delta_L \mid \Delta_K^{\varphi(d)}\Delta_{\Q(\zeta_d)}^n.
  \]

  We first prove that~$\alpha$ is a~$d$-th power in~$L$.
  By contradiction, assume otherwise and let~$\beta$ be a~$d$-th root
  of~$\alpha$ in some extension of~$L$, so that~$L(\beta)/L$ is a
  cyclic extension of degree~$d'\neq 1$ dividing~$d$.
  Let~$\chi$ be a faithful $1$-dimensional character of~$\Gal(L(\beta)/L)$, which we see as a
  ray class group character of~$L$ of some conductor~$\cond$ by class field
  theory. Write~$\cond = \condt \condw$ where~$\condt$ and~$\condw$ are coprime
  and~$\condw$ is supported at primes above~$p$.
  By the assumption on the valuations of~$\alpha$, the extension~$L(\beta)/L$ is
  unramified outside the prime ideals that do not lie above a prime in~$S_p$;
  indeed, locally at every such prime~$\PP$, the extension is generated by a
  $\noell^i$-th root of a unit of~$L_\PP$ for some~$i\le r$.
  Therefore, by~\cite[Proposition 2.5]{Murty1988} applied to~$L(\beta)/L$ and~$\chi$, we
  have
  \[
    \log \Nm(\condw) \le 2n\varphi(d)(\log \noell + \log \varphi(d)) \le
    4nd\log d.
  \]
  In addition, ramification is tame at all primes in~$S$ not above~$p$, so we
  have
  \[
    \Nm(\condt) \le M_S^d.
  \]
  By~\cite[Theorem 4]{Bach1990}, there exists a prime ideal~$\PP$
  of~$L$ that has residue degree~$1$ (so that~$\Nm(\PP)=1\bmod d$),
  does not lie over primes of~$S_p$, such
  that~$\chi(\PP) \neq 1$ and such that
  \[
    \Nm(\PP) \le 18\log^2(\Delta_L^2\Nm(\cond))
    \le 18\left(2d\log|\Delta_K| + 6nd\log d + d\log M_S\right)^2 = c_0.
  \]
  In particular, the prime ideal~$\p = \PP\cap K$ lies in~$T$, so~$\alpha$ is a~$d$-th
  power in~$K_{\p}^\times$, and \emph{a fortiori} in~$L_{\PP}^\times$. This
  implies that~$L(\beta)/L$ is completely split at~$\PP$, contradicting the fact
  that~$\chi(\PP)\neq 1$.
  This proves that~$\alpha\in (L^\times)^d$.

  Now assume that~$L/K$ is cyclic, and let~$L' = K(\zeta_p)$.
  Let~$\beta_1,\dots,\beta_d\in L$ be the $d$-th roots of~$\alpha$, so that we
  have~$L'\subseteq L'(\beta_i) \subseteq L$ for all~$i$. Since~$L/L'$ is a cyclic
  extension of degree a power of~$p$, its intermediate extensions are linearly
  ordered, so that we may choose our numbering so that~$L'(\beta_1) \subseteq
  L'(\beta_i)$ for all~$i$.

  Assume for contradiction that the cyclic extension~$L'(\beta_1)/L'$ is
  nontrivial. Then as above there exists a nontrivial character~$\chi$
  of~$\Gal(L'(\beta_1)/L')$ and a prime ideal~$\PP$ of~$L'$ of degree~$1$ (so
  that~$\Nm(\PP)=1\bmod p$) such
  that~$\chi(\PP)\neq 1$ and~$\p = \PP\cap K$ lies in~$T$.
  By hypothesis, $\alpha$ is a $d$-th power in~$K_\p^\times$, so that there
  exists~$i$ such that~$\beta_i\in K_\p$; since~$L'(\beta_1) \subseteq
  L'(\beta_i)$, this implies~$\beta_1 \in L'_\PP$, i.e. $L'(\beta_1)/L'$ is
  completely split at~$\PP$, contradicting~$\chi(\PP)\neq 1$.
  This proves that the extension~$L'(\beta_1)/L'$ is trivial, i.e.~$\beta_1\in
  L'$.

  We have~$\beta_1^d = \alpha$, so that~$\alpha^{[L':K]} = \Nm_{L'/K}(\alpha)
  = \Nm_{L'/K}(\beta_1)^d$, and therefore~$\alpha^{[L':K]}$ is a $d$-th power
  in~$K$. Since~$[L':K]$ is coprime to~$d$, this implies that~$\alpha$ is a $d$-th
  power in~$K$, as claimed.
\end{proof}

\begin{remark}
  \hfill
  \begin{enumerate}
    \item We did not try to optimize the value~$c_0$, only to obtain an
      explicit value from readily available results in the literature.
    \item For~$\p\notin S_p$, since the valuations of~$\alpha$ at~$\p$ is divisible by~$d$
      and~$\noell\notin\p$, the assumption that~$\alpha$ is a $d$-th power
      in~$K_\p^\times$ is equivalent to the reduction modulo~$\p$
      of~$\alpha\varpi^{-v}$ being a $d$-th power, where~$v=v_\p(\alpha)$
      and~$\varpi\in K$ is such that~$v_\p(\varpi)=1$.
  \end{enumerate}
\end{remark}

\begin{corollary}\label{cor:gruneff}
  Assume GRH.
  There exists a deterministic polynomial time algorithm, that given~$m$
  generators of a subgroup~$V$ of~$K^\times$ and an integer~$d$ that is either~$2$ or
  a power of an odd prime,
  determines $\alpha_1,\dotsc,\alpha_m \in K^\times$ such that
  $\overline{\alpha_1},\dotsc,\overline{\alpha_m}$ generate~$(V \cap
  (K^{\times})^d)/V^d$.
\end{corollary}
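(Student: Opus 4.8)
The plan is to reduce the claimed algorithm to the effective Grunwald--Wang theorem (Theorem~\ref{thm:gruneff}) combined with the characterization of $V \cap (K^\times)^d$ in terms of local conditions (Propositions~\ref{prop:gw1}--\ref{prop:gw3}). First I would recall that since $d$ is either $2$ or a power of an odd prime, the cyclotomic field $L = K(\zeta_d)$ is such that $L/K$ is cyclic: for odd prime powers $d = \noell^r$ the group $(\Z/d\Z)^\times$ is cyclic, and for $d = 2$ we have $L = K$. Hence the second conclusion of Theorem~\ref{thm:gruneff} applies and an element $\alpha \in K^\times$ with valuations divisible by $d$ outside $S_p$ and which is a local $d$-th power at every $\p \in T$ is automatically a global $d$-th power in $K$.

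Next I would spell out the algorithm. Given generators $\beta_1, \dots, \beta_m$ of $V$, first enlarge $V$ (at the cost of at most a polynomial factor in the data) so that all valuations of elements of $V$ at primes not above $\noell$ are divisible by $d$ — this can be done by using valuations at the primes in $S := \{\p \mid \noell \in \p\}$, which are polynomially bounded in number. Then compute the explicit bound
\[
  c_0 = 18d^2\left(2\log|\Delta_K| + 6n\log d + \log M_S\right)^2
\]
from Theorem~\ref{thm:gruneff}; this is polynomial in the input size. Enumerate the set $T$ of prime ideals $\p$ of $K$ with $\p \notin S_p$, residue degree $1$, $\Nm(\p) \equiv 1 \bmod d$, and $\Nm(\p) \le c_0$: this set has polynomially many elements and can be produced in deterministic polynomial time (factoring the rational primes up to $c_0$, each of which is polynomially bounded). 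For each $\p \in T$ form the map $\chi_\p \colon V/V^d \to \Z/d\Z \times k_\p^\times/(k_\p^\times)^d$ of Proposition~\ref{prop:gw1}; since $\Nm(\p) \equiv 1 \bmod d$, the target has a $d$-torsion factor of size $d$ and these maps are computable by reduction modulo $\p$ and a discrete-logarithm-free power-residue test in the finite field $k_\p$ (raising to the power $(\Nm(\p)-1)/d$), all in polynomial time. Compute the intersection $W/V^d = \bigcap_{\p \in T} \ker \chi_\p$ by linear algebra over $\F_\noell$ (or over $\Z/d\Z$), and output an $\F_\noell$-basis $\overline{\alpha_1}, \dots, \overline{\alpha_m}$ of it.

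It remains to argue correctness, i.e. that $W/V^d = (V \cap (K^\times)^d)/V^d$. The inclusion $(V \cap (K^\times)^d)/V^d \subseteq W/V^d$ is clear since a global $d$-th power is a local $d$-th power at every $\p \in T$. For the reverse inclusion, let $\alpha \in V$ with $\overline{\alpha} \in W/V^d$; then $\alpha$ has valuations divisible by $d$ outside $S_p$ (by the enlargement step) and, by Proposition~\ref{prop:gw1}, the vanishing of $\chi_\p(\overline\alpha)$ is exactly the condition that $\alpha$ be a $d$-th power in $K_\p^\times$ for $\p \in T$. Theorem~\ref{thm:gruneff}, together with the cyclicity of $L/K$ noted above, then gives $\alpha \in (K^\times)^d$, so $\overline\alpha \in (V \cap (K^\times)^d)/V^d$. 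Finally, the sizes of the $\alpha_i$ are controlled because they are products of the original generators with exponents in $\{0,\dots,d-1\}$ (after reducing modulo $V^d$), keeping the whole computation polynomial.

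\emph{Main obstacle.} The delicate point is the polynomial-time bookkeeping: one must check that enumerating $T$, reducing the generators modulo each $\p \in T$, and the power-residue tests in $k_\p$ all stay polynomial in the input — this hinges on $c_0$ being polynomially bounded (which it is, being polynomial in $\log|\Delta_K|$, $n$, $\log d$, $\log M_S$) and on the number of primes below $c_0$ being polynomial. The second subtlety is ensuring $L/K$ is cyclic so that Theorem~\ref{thm:gruneff} yields a global $d$-th power in $K$ rather than merely in $L$; this is exactly why the hypothesis restricts $d$ to $2$ or an odd prime power. If one only knew $\alpha \in (L^\times)^d$ one would need a descent argument back to $K$, which is precisely the classical argument (steps 4 and 5 of \cite[Chapter IX, \S1, Theorem 1]{ArtinTate2009}) already invoked in the proof of Theorem~\ref{thm:gruneff}, so no extra work is needed here.
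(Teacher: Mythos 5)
Your overall route is exactly the paper's: combine Proposition~\ref{prop:gw2} with Theorem~\ref{thm:gruneff} to replace the infinite intersection of local kernels by the intersection over the finitely many primes of norm at most~$c_0$, observe that $L=K(\zeta_d)$ is cyclic over~$K$ precisely because $d$ is $2$ or an odd prime power, and then note that the number and size of the primes involved, and hence the whole computation, is polynomial in the input. The correctness argument and the complexity bookkeeping are the same as in the paper.

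There is, however, one step in your write-up that would fail as stated: the proposed ``enlargement'' of $V$ so that its elements have valuations divisible by $d$ outside the primes above $\noell$. First, you cannot enlarge $V$ at all --- the quantity to be computed is $(V\cap(K^\times)^d)/V^d$ for the \emph{given} $V$, and replacing $V$ by a larger group changes both the numerator and the denominator. Second, taking $S=\{\p\mid \noell\in\p\}$ does not make the hypothesis of Theorem~\ref{thm:gruneff} hold: a generator of $V$ may have valuation $1$ at some prime $\q$ not above $\noell$, and no enlargement of $V$ fixes that. The correct (and simpler) move, which is what the paper does, is to choose $S$ to be the smallest set of primes with $V\subseteq\OO_{K,S}^\times$, i.e.\ the support of the given generators; then every $\alpha\in V$ has valuation $0$ (hence divisible by $d$) at all $\p\notin S_p$, the hypothesis of Theorem~\ref{thm:gruneff} is automatic, and $\log M_S$ is still polynomially bounded (and can be bounded without fully factoring the generators). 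With that substitution the rest of your argument goes through verbatim.
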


\begin{proof}
  Let~$S$ be the smallest set of primes of~$K$ such
  that~$V\subseteq\OO_{K,S}^\times$, and~$M_S = \prod_{\p\in S}N(\p)$. Then~$\log
  M_S$ is polynomial in the size of the input, and can be bounded in polynomial
  time without factoring the given generators of~$V$.
  From Proposition~\ref{prop:gw2} and Theorem~\ref{thm:gruneff} it follows that
  \[ (V \cap (K^{\times})^d)/V^d = \bigcap_{d \not\in \mathfrak p, \Nm(\mathfrak
  p) \leq c_0} \ker(\chi_\p \colon V/V^d \to \Z/d\Z \times k_\p^\times /
  (k_\p^{\times})^d), \]
  where $c_0 = 18d^2\left(2\log|\Delta_K| + 6n\log d + \log M_S\right)^2$.
  Since the number and size of the primes~$\p$ are polynomial in the input, this
  proves the claim.
\end{proof}

We need an extra technical ingredient: a suitably normalized logarithmic
embedding to control the height of $S$-units that appear.
\begin{definition}\label{def:heightlogemb}
  Let~$K$ be a number field with~$r_1$ real embeddings and~$r_2$ pairs of
  conjugate complex embeddings.
  For~$x\in K$, the (logarithmic) \emph{height} of~$x$ is
  \[
    \height(x) = \sum_\sigma \max\Bigl(0, n_\sigma\log|\sigma(x)|\Bigr) +
    \sum_{\p} \max \Bigl(0,(\log N\p)v_\p(x) \Bigr),
  \]
  where~$\sigma$ ranges over the~$r_1+r_2$ conjugacy classes of complex embeddings of~$K$,
  and~$n_\sigma=1$ or~$2$ according to whether~$\sigma$ is real or complex,
  and~$\p$ ranges over nonzero prime ideals of~$K$.

  Let~$S$ be a finite set of nonzero prime ideals of~$K$. The \emph{logarithmic embedding}
  attached to~$S$ is the map~$\Logemb\colon \OO_{K,S}^\times \to
  \R^{r_1+r_2+|S|}$ defined by
  \[
    \Logemb(x) = \Bigl(n_\sigma\log|\sigma(x)|\Bigl)_\sigma \times \Bigl((\log
    N\p)v_\p(x)\Bigl)_\p.
  \]
\end{definition}

\begin{lemma}\label{lem:heightlogemb}
  Use the notations of Definition~\ref{def:heightlogemb}.
  For all~$x\in\OO_{K,S}^\times$, we have
  \[
    \height(x) \le \sqrt{r_1+r_2+|S|} \cdot \left\|\Logemb(x)\right\|_2.
  \]
\end{lemma}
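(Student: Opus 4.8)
The plan is to recognize $\height(x)$ as the $\ell^1$-norm of the vector of positive parts of the coordinates of $\Logemb(x)$, and then to invoke the Cauchy--Schwarz inequality.

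First I would exploit the hypothesis $x\in\OO_{K,S}^\times$: since $v_\p(x)=0$ for every prime ideal $\p\notin S$, the sum $\sum_{\p}\max\bigl(0,(\log N\p)v_\p(x)\bigr)$ appearing in the definition of $\height(x)$ is actually a finite sum over $\p\in S$. Writing $\Logemb(x)=(y_1,\dots,y_t)\in\R^t$ with $t=r_1+r_2+|S|$, where the first $r_1+r_2$ coordinates are the numbers $n_\sigma\log|\sigma(x)|$ and the remaining $|S|$ coordinates are the numbers $(\log N\p)v_\p(x)$ for $\p\in S$, one then has exactly
\[
  \height(x)=\sum_{i=1}^{t}\max(0,y_i).
\]

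The inequality is now immediate. Applying Cauchy--Schwarz to the vectors $\bigl(\max(0,y_i)\bigr)_{1\le i\le t}$ and $(1,\dots,1)\in\R^t$ gives
\[
  \sum_{i=1}^{t}\max(0,y_i)\le\sqrt{t}\,\Bigl(\sum_{i=1}^{t}\max(0,y_i)^2\Bigr)^{1/2},
\]
and since $\max(0,y_i)^2\le y_i^2$ for each $i$, the right-hand side is at most $\sqrt{t}\,\bigl(\sum_{i=1}^{t}y_i^2\bigr)^{1/2}=\sqrt{t}\,\|\Logemb(x)\|_2$, which is the asserted bound.

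The argument is elementary and I do not expect any genuine obstacle; the only step that merits a line of justification is the identification $\height(x)=\sum_i\max(0,y_i)$, which is precisely where the $S$-unit hypothesis is used, ensuring that no prime outside $S$ contributes to $\height(x)$ or to $\Logemb(x)$.
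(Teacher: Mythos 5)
Your proof is correct and is essentially the paper's argument: the paper simply invokes the inequality $\|y\|_1\le\sqrt{t}\,\|y\|_2$ between the $L^1$ and $L^2$ norms, and your route through Cauchy--Schwarz applied to the vector of positive parts is just a slightly more explicit version of the same estimate (noting $\max(0,y_i)\le|y_i|$). The identification of $\height(x)$ with the sum of positive parts of the coordinates of $\Logemb(x)$, using that $S$-units have zero valuation outside $S$, is exactly the intended reading of Definition~\ref{def:heightlogemb}.
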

\begin{proof}
  This follows immediately from the inequality between the $L^1$ and $L^2$
  norms.
\end{proof}

As a conclusion to this section, we prove a polynomial time reduction for the
computation of~$S$-units from~$K$ to its subfields in the presence of a
norm relation.

\begin{algorithm}\label{alg:redtosubfields}
  Assume that the finite group~$G$ admits a norm relation with
  respect to a set~$\Hsg$ of subgroups of~$G$.
  \begin{itemize}
    \item Input: a number field~$K$, an injection~$G\to\Aut(K)$, a finite
      $G$-stable set~$S$ of prime ideals of~$K$, and for each~$H\in\Hsg$, a
      $\Z$-basis~$B_H$ of~$\order_{K^H,S}^\times$.
    \item Output: a $\Z$-basis of~$\order_{K,S}^\times$.
  \end{itemize}
  \begin{enumerate}
    \item\label{step:primesG} Let~$p_1=2 < p_2 <\dots < p_k$ be the prime divisors of~$2|G|$.
    \item\label{step:Gaction} Let~$B = \bigcup_{H\in\Hsg}\bigcup_{g\in G}g(B_H)$.
    \item\label{step:defV} Let~$V\subseteq \order_{K,S}^\times$ be the subgroup generated by~$B$.
    \item $v\sto$ the $2$-adic valuation of~$|G|^3$.
    \item $V_1 \sto V$.
    \item\label{step:saturation2} Repeat~$v$ times
      \begin{enumerate}
        \item\label{step:roots2} $V_1 \sto \langle V_1, \sqrt{\alpha_1},\dots,\sqrt{\alpha_m}\rangle$
          where~$\overline{\alpha_1},\dotsc,\overline{\alpha_m}$ is a
          basis of~$(V_1 \cap (K^{\times})^2)/V_1^2$.
        \item\label{step:keepsmall} reduce the basis of~$V_1$ with respect
          to~$B$ in the sense of \cite[Lemma~7.1]{MicciancioGoldwasser}
          in the logarithmic embedding~$\Logemb$.
      \end{enumerate}
    \item\label{step:saturationodd} For~$i=2$ to~$k$
      \begin{enumerate}
        \item $v\sto$ the $p_i$-adic valuation of~$|G|^3$.
        \item\label{step:rootsodd} $V_i\sto$ the
          $p_i$-saturation of~$V$ by taking~$d$-th roots once, where~$d=p_i^v$.
      \end{enumerate}
    \item\label{step:gatherV} $V \leftarrow V_1\cdots V_l$.
    \item\label{step:basisV} Return a basis of~$V$.
  \end{enumerate}
\end{algorithm}

\begin{remark}
  Algorithm~\ref{alg:redtosubfields} never writes down an explicit norm
  relation; it only uses the fact that there exists one.
  Note that for given $\mathcal H$, this can be checked in polynomial time, see Section~\ref{subsec:constrrel}.
\end{remark}

We now prove the bit complexity of~Algorithm~\ref{alg:redtosubfields}.
In order to do so, we use the model of Lenstra~\cite{Lenstra1992} to encode the input of the algorithm.

\begin{theorem}\label{thm:redtosubfields}
  Assume GRH. Let $G$ be a finite group and $\Hsg$ a set of subgroups 
  of~$G$. 
  Assume that there exists a norm relation with respect to~$\Hsg$.
  Then Algorithm~\ref{alg:redtosubfields} is a deterministic polynomial time
  algorithm that, on input of
  \begin{itemize}
    \item a number field $K$,
    \item an injection $G \to \Aut(K)$,
    \item a finite $G$-stable set $S$ of prime ideals of $K$,
    \item for each $H$ in $\Hsg$, a basis of the group of $S$-units of the subfield 
          fixed by $H$,
  \end{itemize}
  returns a $\Z$-basis of the group of $S$-units of $K$.
\end{theorem}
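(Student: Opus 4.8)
The plan is to establish correctness of Algorithm~\ref{alg:redtosubfields} first and then its polynomial running time; the running-time analysis will reduce, after routine bookkeeping, to one delicate point, namely controlling the bit sizes of intermediate elements, which is the main obstacle.

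For correctness I would argue as follows. After Steps~(\ref{step:Gaction})--(\ref{step:defV}) the group $V$ is precisely the $\Z[G]$-submodule of $\OO_{K,S}^\times$ generated by the $\OO_{K^H,S}^\times$ for $H\in\Hsg$; I would also enlarge $V$ once at the start so that it contains the torsion subgroup $\mu(K)$ of $K^\times$, which is computable in polynomial time by~\cite{Lenstra1983} and has height $0$. Since $G$ admits a norm relation with respect to $\Hsg$ we have $d(\Hsg)>0$, and Theorem~\ref{thm:denrelG} gives that $d:=d(\Hsg)$ divides $|G|^3$; Corollary~\ref{cor:relSunits}(\ref{item:relSunits-generel}), applied to a norm relation with respect to $\Hsg$ of denominator $d$, then shows that $A:=\OO_{K,S}^\times/V$ is finite of exponent dividing $d$ and that $\OO_{K,S}^\times$ is the $d$-saturation of $V$. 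In particular $A=\bigoplus_{p\mid|G|}A_p$ with each $A_p$ of exponent dividing $p^{v_p(|G|^3)}$. Since $\OO_{K,S}^\times$ is saturated, the $p$-saturation $W_p$ of $V$ lies inside $\OO_{K,S}^\times$, so $W_p/V=A_p$ and, by the exponent bound, $W_p=\{\alpha\in K^\times\mid\alpha^{p^{v}}\in V\}$ for every $v\ge v_p(|G|^3)$. For an odd prime $p=p_i$ the extension $K(\zeta_{p^{v}})/K$ is cyclic, so Corollary~\ref{cor:gruneff} computes $(V\cap(K^\times)^{p^{v}})/V^{p^{v}}$; adjoining $p^{v}$-th roots of a basis of this group and using $\mu(K)\subseteq V$ then produces exactly $W_{p}$ in the single pass of Step~(\ref{step:rootsodd}). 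For $p=2$ the extension $K(\zeta_{2^r})/K$ need not be cyclic for $r\ge2$, which is why one is forced to iterate and may invoke Corollary~\ref{cor:gruneff} only with $d=2$; an induction using Lemma~\ref{lem:sat} and $\{\pm1\}\subseteq V$ shows that after $k$ rounds of Step~(\ref{step:roots2}) one has $V_1/V=A_2[2^k]$, so after $v_2(|G|^3)$ rounds $V_1=W_2$. Finally, since $A=\bigoplus_pA_p$ and $W_p/V=A_p$, the product formed in Step~(\ref{step:gatherV}) has quotient $A$ over $V$, hence equals $\OO_{K,S}^\times$, and Step~(\ref{step:basisV}) returns a $\Z$-basis of it.

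For the running time I would first dispose of the routine points. The group $G$ is fixed, so the number $k$ of primes and all loop lengths $v=3v_{p_i}(|G|)$ are absolute constants, $|B|\le|\Hsg|\cdot|G|\cdot\max_H|B_H|$ is polynomial in the input size, and each round of Steps~(\ref{step:saturation2}) and~(\ref{step:saturationodd}) adjoins at most $r_1+r_2+|S|$ new elements, so the number of generators stays polynomial throughout; applying elements of $G$, forming products, computing $\p$-adic valuations and residues, and extracting $d$-th roots by polynomial factorisation over $K$~\cite{Lenstra1983} are all polynomial time. Assuming GRH, Corollary~\ref{cor:gruneff} (which rests on the effective Grunwald--Wang Theorem~\ref{thm:gruneff}) computes $(V_1\cap(K^\times)^d)/V_1^d$ in polynomial time, since it inspects only primes $\p$ with $\Nm(\p)\le c_0=18d^2(2\log|\Delta_K|+6n\log d+\log M_S)^2$, of which there are polynomially many.

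The main obstacle is the bit-size control, which is exactly what Step~(\ref{step:keepsmall}) and the analogous reduction inside Step~(\ref{step:rootsodd}) are designed to handle: naively iterating ``extract roots, re-extract a basis'' could cause an exponential blow-up. I would maintain the invariant that $V_1$, and likewise each $V_i$, is stored by a $\Z$-basis whose logarithmic embeddings $\Logemb(\cdot)$ all have $\ell^2$-norm bounded by a fixed polynomial in the input size; by Lemma~\ref{lem:heightlogemb} this bounds $\height(x)$ and $\height(x^{-1})$ for each basis element $x$, hence its bit size in Lenstra's encoding~\cite{Lenstra1992}. The base case holds because $B$ has polynomially bounded height by hypothesis. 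For the inductive step, a representative $\alpha_j$ of a class in $V_1/V_1^{d}$ obtained by linear algebra over $\Z/d\Z$ may be taken to be a product of the current basis elements with exponents in $\{0,\dots,d-1\}$, and $d\le|G|^3$, so $\|\Logemb(\alpha_j)\|$ and hence $\|\Logemb(\sqrt[d]{\alpha_j})\|$ remain polynomially bounded; thus after Step~(\ref{step:roots2}) or~(\ref{step:rootsodd}) the enlarged group $V_1'$ has a polynomially bounded generating set. Crucially, since $\mu(K)\subseteq V$ and the exponent of $\OO_{K,S}^\times/V$ divides $|G|^3$, the finite group $\Logemb(V_1')/\Logemb(V)$ has exponent dividing $|G|^3$; reducing this generating set against the fixed lattice $\Logemb(\langle B\rangle)$ in the sense of~\cite[Lemma~7.1]{MicciancioGoldwasser} then produces a $\Z$-basis of $\Logemb(V_1')$ each of whose vectors lies within $O(|G|^3)$ copies of a fundamental domain of $\Logemb(\langle B\rangle)$, restoring the invariant with the governing polynomial worsened only by a $\operatorname{poly}$ factor. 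As there are only $O(1)$ such rounds, the invariant survives; it is essential here that $G$ is fixed, since generic lattice reduction would contribute $2^{\Theta(n)}$ factors and destroy polynomiality. Combining the three parts shows that Algorithm~\ref{alg:redtosubfields} runs in deterministic polynomial time and, by the correctness argument, outputs a $\Z$-basis of $\OO_{K,S}^\times$.
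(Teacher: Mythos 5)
Your proposal is correct and follows essentially the same route as the paper's proof: the denominator bound $d\mid|G|^3$ from Theorem~\ref{thm:denrelG}, the exponent bound of Corollary~\ref{cor:relSunits} for correctness, Corollary~\ref{cor:gruneff} (effective Grunwald--Wang under GRH) for the saturation steps, and Lemma~\ref{lem:heightlogemb} together with the reduction of Step~(\ref{step:keepsmall}) for the bit-size control. You merely expand what the paper compresses into a citation of Corollary~\ref{cor:relSunits} --- the $p$-part decomposition of $\OO_{K,S}^\times/V$, the need to adjoin $\mu(K)$ to $V$ so that one root per class suffices, the single pass for odd $p$ via cyclicity of $K(\zeta_{p^v})/K$ versus the iteration at $p=2$ --- all of which is consistent with the paper's intent.
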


\begin{proof}
  Denote by $n$ the degree of the number field $K$ over $\Q$.
  Note that the height and
  bitsize of elements are bounded relatively to each other by a polynomial
  in the size of the input.
  We will therefore measure the size of various
  quantities in terms of height, without affecting the polynomial time claim of
  the algorithm.

  Let~$\Sigma$ denote the total size of the input.
  By hypothesis there exists a norm relation in~$G$.
  Moreover, by Theorem~\ref{thm:denrelG}, we may assume that the denominator of
  the relation divides~$|G|^3$.

  Step~(\ref{step:primesG}) only requires factoring~$|G| = O(n) = O(\Sigma)$ and
  therefore takes polynomial time.

  After Step~(\ref{step:Gaction}), since the action of automorphisms does not
  change the height of elements, the total size of~$B$ is~$O(n\Sigma)$.

  Note that in Steps~(\ref{step:defV})--(\ref{step:basisV}),
  one can deduce a basis from a generating set of the groups involved
  in polynomial time: the algorithms of~\cite{Ge} provide a basis
  of the relations between the generators, and the Hermite normal form \cite{hafner_HNF} allows us to obtain a basis of the group in polynomial time.

  Consider a saturation step~(\ref{step:roots2}) or~(\ref{step:rootsodd})
  corresponding to taking~$d$-th roots.
  By Corollary~\ref{cor:gruneff} we can determine generators
  $\overline{\alpha_1},\dotsc,\overline{\alpha_m}$ of $(V \cap
  (K^{\times})^d)/V^d$ in polynomial time. Computing the roots themselves also
  takes polynomial time.
  Moreover, in the loop~(\ref{step:saturation2}), Step~(\ref{step:keepsmall})
  together with Lemma~\ref{lem:heightlogemb} make sure that the size of~$V_1$
  stays bounded by a polynomial in~$\Sigma$ independent of the number of steps.
  Therefore the loops~(\ref{step:saturation2}) and~(\ref{step:saturationodd})
  take polynomial time.

  The steps~(\ref{step:gatherV}) and~(\ref{step:basisV}) take polynomial time in
  the data computed at this point.

  The correctness of the algorithm follows from Corollary \ref{cor:relSunits}
  (\ref{item:relSunits-generel}).
\end{proof}

\begin{remark}
  In Theorem~\ref{thm:redtosubfields}, the $S$-units of $K$ and its subfields are represented with respect to an integral basis.
  It is well known that using this, the representation can require exponentially
  large coefficients with respect to the discriminant of the field.
  An alternative approach is to represent the $S$-units of the input as well as the output, that is, of the subfields as
  well as of $\mathcal O^\times_{K, S}$, using compact representations.
  That the statement remains true using compact representations follows from~\cite{Biasse2016}, where it
  is shown that compact representations can be computed in polynomial time.
\end{remark}

\subsection{Computing class groups}\label{sec:algoclgp}

Assume that $K/F$ is a normal extension of number fields with Galois group $G$
that admits a norm relation
\[
  d = \sum_{i=1}^\ell a_i N_{H_i} b_i
\]
with $H_i \leq G$, $d \in \Z$, $a_i,b_i \in \Z[G]$.
We now describe how to use
this to determine the class group of $K$ from the class groups of the subfields $K^{H_i}$.
Let $S$ be a finite set of prime ideals that generates the class group $\Cl(K)$ of $K$.
Assuming the generalized Riemann hypothesis (GRH) we can use for example Bach's bound on the maximal norm of the prime
ideals required to generate $\Cl(K)$ and the set $S = \{ \p \ | \ \Nm(\p) \leq 12 \cdot
\log(\lvert \Delta_K\rvert)^2 \}$ (see~\cite{Bach1990}) or one can compute an ad-hoc set $S$ using the methods of~\cite{Belabas2008} or~\cite{Grenie2018}.

\subsubsection*{Using $S$-units}
Using the algorithm of Section~\ref{subsec:sunit} we can determine a $\Z$-basis of the $S$-unit group $\order_{K, S}^\times$.
Now as in Buchmanns's algorithm~\cite{Bsub}, consider the map
\[ \varphi \colon \order_{K, S}^\times \longrightarrow \Z^{\lvert S \rvert}, \ \alpha \longmapsto (v_\p(\alpha))_{\p \in S}.\]
Then $\Cl(K) \cong \coker(\varphi)$, since the sequence
$\order_{K, S}^\times \xrightarrow{\varphi} \Z^{\lvert S \rvert} \xrightarrow{\psi} \Cl(K) \rightarrow 0$
is exact, where $\psi((v_\p)_{\p \in S}) = [ \prod_{\p \in S} \p^{v_\p}]$. 

\subsubsection*{Direct computation}

We now consider the map
$\Cl(K) \otimes \Z[\frac 1 d] \rightarrow \bigoplus_{i=1}^\ell \Cl(K^{H_i}) \otimes \Z[\frac 1 d]$, $[\ag] \mapsto ([\Nm_{K/K^{H_i}}(\ag^{b_i})]),$
which by Proposition~\ref{prop:apl_normrel} is an isomorphism.
Hence $\Cl(K) \otimes \Z[\frac 1 d] \cong \langle \Phi(\p) \, | \, \p \in S \rangle \otimes \Z[\frac 1 d]$.
In particular, if one is interested only in the $\noell$-part of the class group
for some prime $\noell$ not dividing $d$ or if the denominator~$d$ of the
norm relation is equal to $1$, this provides a second way to determine the
structure of the class group.

\subsection{Class groups of abelian extensions}\label{subsec:clgab}

In this section we describe a Las Vegas algorithm based on the ideas above to compute the
class group of an abelian field. Contrary to Algorithm~\ref{alg:redtosubfields} and its
variants, the algorithm we present here never computes an explicit $d$-th root,
and therefore completely avoids using LLL, and does not need a Bach-type bound to
certify its correctness, making it very fast in practice.
This is possible because we are only asking for
the structure of the class group and not for explicit units, $S$-units or
generators of ideals, which would be computationally harder.

Let~$K/F$ be a normal extension of number fields with abelian Galois group~$G$.

Write~$G\cong C \times Q$ where $C$ is the largest cyclic factor of~$G$.
According to Theorem~\ref{thm:optimalabelianrels}, we will have the following three cases.
\begin{enumerate}
  \item The order $|Q|$ has at least two distinct prime divisors. Write $Q\cong P_1\times \dotsb \times P_k$ with~$P_i$ abelian $p_i$-groups with~$p_i$
    distinct primes.
    This case does not require any saturation, and reduces to computations of
    class groups in various subfields~$K_j/F$ with Galois groups that
    are isomorphic to subgroups of~$C\times P_i$.
  \item The group $Q$ is a nontrivial $p$-group for some prime~$p$. Then we apply the
    methods from Section~\ref{subsec:sunit}
, using a relation with denominator a power of~$p$.
    This case requires $p$-saturation, and reduces to computations of class
    groups and units in various subfields~$K_i/F$ with Galois groups that are
    isomorphic to subgroups of~$C$.
  \item We have $Q=1$: then the norm relation method does not apply, so we simply use
    Buchmann's algorithm~\cite{Bsub} (or any other algorithm that can compute
    the class group and units).
\end{enumerate}

The algorithms corresponding to cases~(1) and~(2) are the following.

\begin{algorithm}\label{alg:ab1}
Assume $|Q|$ has at least two distinct prime divisors. Write $Q\cong
  P_1\times P_2\times \dots \times P_k$ with~$P_i$ abelian $p_i$-groups (case 1
  above).
  \begin{itemize}
    \item Input: $K/F$ with Galois group~$G$.
    \item Output: the class group~$\Cl(K)$.
  \end{itemize}
\begin{enumerate}
  \item Use Theorem~\ref{thm:optimalabelianrels} to write a norm relation with
    denominator~$1$, involving a collection~$(H_j)$ of subgroups of~$G$ such
    that each~$G/H_j$ is isomorphic to a subgroup of some~$C\times P_i$.
  \item Compute the subfields~$K_j = K^{H_j}$.
  \item Compute the class groups of the subfields~$K_j$, and for each subfield,
    a set~$S_j$ of prime ideals that generates the class group.
  \item Let~$S = \bigcup_j \{\pg\order_K \mid \pg\in S_j \}$.
  \item Compute the image~$\clcomp$ of~$S$ in~$\bigoplus_j \Cl(K_j)$ under the
    map~$\Phi$ of Proposition~\ref{prop:apl_normrel}.
  \item Return~$\clcomp$.
\end{enumerate}
\end{algorithm}

\begin{remark}
  The ideals in $S$ are not necessarily prime, but we only use the property that
  their images generate the class group~$\Cl(K)$.
\end{remark}

\begin{proposition}
  Algorithm~\ref{alg:ab1} correctly computes the class group of~$K$.
\end{proposition}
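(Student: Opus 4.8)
The plan is to reduce everything to Proposition~\ref{prop:apl_normrel}, applied with $d=1$ to the denominator-$1$ norm relation produced in Step~(1), together with the hypothesis that each $S_j$ generates $\Cl(K_j)$. First I would check that Step~(1) makes sense: since $|Q|$ has at least two distinct prime divisors, Theorem~\ref{thm:optimalabelianrels}~(\ref{item:denom1cond}) guarantees that $G$ admits a denominator-$1$ norm relation $1=\sum_j a_j N_{H_j}b_j$ with the $H_j\le G$ nontrivial and $a_j,b_j\in\Z[G]$, and part~(\ref{item:denom1rel}) produces one in which each $G/H_j$ embeds into some $C\times P_i$; the latter is what ensures that the class groups computed in Steps~(2)--(3) are those of number fields of degree $|G/H_j|\cdot[F:\Q]<[K:\Q]$, so that any recursion terminates. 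For correctness only the existence of a denominator-$1$ relation is used.

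The crux is the claim that the set $S$ of Step~(4) generates $\Cl(K)$, in the sense that the subgroup of $\Cl(K)$ generated by the classes $[\mathfrak r]$ of the ideals $\mathfrak r\in S$ is all of $\Cl(K)$. I would invoke Proposition~\ref{prop:apl_normrel} with $d=1$, hence $R=\Z$, which gives that the map $\Psi\colon\bigoplus_j\Cl(K_j)\to\Cl(K)$, $([\ag_j])_j\mapsto\prod_j[\ag_j\order_K]^{a_j}$, is surjective. Thus every class in $\Cl(K)$ is a product of terms $[\ag_j\order_K]^{a_j}=\prod_{g\in G}[g(\ag_j)\order_K]^{(a_j)_g}$ with $\ag_j$ an ideal of $K_j$. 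At this point I use that $G$ is abelian: each $H_j$ is normal, so $K_j=K^{H_j}$ is $G$-stable and $g(\ag_j)$ is again an ideal of $K_j$; since $S_j$ generates $\Cl(K_j)$, the class $[g(\ag_j)]$ is a $\Z$-combination of the classes of the primes in $S_j$, and hence $[g(\ag_j)\order_K]$ lies in the subgroup of $\Cl(K)$ generated by $\{[\qg\order_K]\mid\qg\in S_j\}$, which is in turn contained in the subgroup generated by $S$. Therefore the subgroup generated by $S$ contains every class of $\Cl(K)$, i.e.\ $S$ generates $\Cl(K)$.

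To conclude, I would use the injectivity half of Proposition~\ref{prop:apl_normrel} (still with $d=1$): the map $\Phi\colon\Cl(K)\to\bigoplus_j\Cl(K_j)$, $[\ag]\mapsto(\Nm_{K/K_j}(\ag^{b_j}))_j$, is injective. Since $\Phi$ is a group homomorphism and $S$ generates $\Cl(K)$, the group $\clcomp$ computed in Step~(5) equals $\Phi(\Cl(K))$, which is isomorphic to $\Cl(K)$; so the algorithm outputs a group isomorphic to $\Cl(K)$, as claimed. The step I expect to be the main obstacle is controlling the $\Z[G]$-action concealed in the coefficients $a_j$: the ideals $g(\ag_j)$ appearing in $[\ag_j\order_K]^{a_j}$ are not a priori ideals of any of the chosen subfields, and without the abelian hypothesis the set $S$ of Step~(4) may fail to generate $\Cl(K)$ (one would then have to replace $S$ by its image under the $G$-action). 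Everything else is routine bookkeeping with the maps $\Phi$ and $\Psi$ of Proposition~\ref{prop:apl_normrel}.
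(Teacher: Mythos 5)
Your proof is correct and follows essentially the same route as the paper's: the surjectivity of $\Psi$ from Proposition~\ref{prop:apl_normrel} (with $d=1$) shows that $S$ generates $\Cl(K)$, and the injectivity of $\Phi$ shows that $\clcomp$ is isomorphic to $\Cl(K)$. The only difference is that you spell out the point the paper leaves implicit, namely that the $\Z[G]$-coefficients $a_j$ cause no trouble because in the abelian case each $K_j$ is $G$-stable, so $[g(\ag_j)\order_K]$ still lies in the subgroup generated by $S$.
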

\begin{proof}
  By the surjectivity part of Proposition~\ref{prop:apl_normrel}, $S$ generates
  the class group of~$K$.
  By the injectivity part of Proposition~\ref{prop:apl_normrel}, $\clcomp$ is
  isomorphic to the class group of~$K$.
  This prove the correctness of the algorithm.
\end{proof}

\begin{algorithm}\label{alg:ab2}
  Assume~$Q$ is a nontrivial $p$-group (case~2 above).
  \begin{itemize}
    \item Input: $K/F$ with Galois group~$G$.
    \item Output: the class group~$\Cl(K)$.
  \end{itemize}
\begin{enumerate}
  \item Use Theorem~\ref{thm:optimalabelianrels} to write a norm relation with
    denominator~$d$ a power of~$p$, involving a collection~$(H_i)$ of subgroups
    of~$G$ such that each~$G/H_i$ is isomorphic to a subgroup of~$C$.
  \item Compute the subfields~$K_i = K^{H_i}$, and their class groups and
    units.
  \item Compute the coprime-to-$p$ part of~$\Cl(K)$ as follows.
    \begin{enumerate}
      \item For each~$K_i$, compute a set~$S_i$ of prime ideals that generates
        the coprime-to-$p$ part of the class group.
      \item Let~$S' = \bigcup_i \{\pg\order_K \mid \pg\in S_i \}$.
      \item Compute the image~$\clcomp_{p'}$ of~$S'$ in~$\bigoplus_i \Cl(K_i)_{p'}$ under the
    map~$\Phi$ of Proposition~\ref{prop:apl_normrel}.
    \end{enumerate}
  \item Let~$h_{p'} = \lvert\clcomp_{p'}\rvert$.
  \item Compute the roots of unity~$W$ in~$K$.
  \item\label{step:brauer} By seeing the relation as a Brauer relation using
    Proposition~\ref{prop:brauernormrel}, compute~$\hR_K = h_K\Reg_K$ from
    the same quantity in the subfields using Proposition~\ref{prop:brauerzeta}
    and the analytic class number formula.
  \item Compute the subgroup~$U_0$ of $\order_K^\times$ generated
    by~$\bigcup_i \order_{K_i}^\times$, and the regulator~$R_0$ of~$U_0$;
    let~$r_0$ be the number of generators of~$U_0$.
  \item Initialize a set~$T$ of prime ideals~$\pg$ such that~$N(\pg)\equiv 1\bmod d$.

    \emph{The primes in $T$~will be used to detect $d$-th powers.}
  \item Initialize a set~$S_\Q$ of prime numbers, and compute the
    set~$S$ of all prime ideals of~$K$ above the primes in~$S_\Q$.

    \emph{We hope that~$S$ will generate the $p$-part of the class group.}
  \item Compute the $p$-part of~$\Cl(K)$ as follows.
    \begin{enumerate}
      \item\label{step:saturationloop} Compute the subgroup~$U_S$ of~$\order_{K,S}^\times$ generated
        by~$\bigcup_i \order_{K_i,S}^\times$; let~$r$ be the number of
        generators of~$U_S$.
      \item Compute the map
        \[
          f \colon \Z^r
          \longrightarrow U_S \longrightarrow \Z^S \oplus \bigoplus_{\p\in T}\F_\p^\times
          \longrightarrow (\Z/d\Z)^{|S|+|T|}
        \]
        given by the
        valuations at prime ideals in~$S$ and discrete logarithms
        in~$\F_\pg^\times$ for~$\pg\in T$.
      \item Compute~$V_S = \ker f$ and~$V_0 = \ker \bigl(f\colon \Z^{r_0} \to U_0 \to
        (\Z/d\Z)^T\bigr)$.
      \item Compute~$u = \left|V_0/\bigl(V_0^d\cdot (W\cap V_0)\bigr)\right|$.
      \item\label{step:valimage} Compute the subgroup~$V$ generated by the image
        of~$U_S \to \Z^S$ and $\frac{1}{d}$~times the image of~$V_S \to U_S \to
        \Z^S$.
      \item\label{step:pclassgroup} Compute the~$p$-part~$\clcomp_p$ of~$\Z^{S}/V$.
      \item Let~$h_p = \lvert\clcomp_p\rvert$.
      \item If~$h_{p'}R_0h_p/u > \hR_K/2$, then return~$\clcomp_{p'}\times\clcomp_p$;
        otherwise increase $T$ and $S_\Q$ and go back
        to~(\ref{step:saturationloop}).
    \end{enumerate}
\end{enumerate}
\end{algorithm}

\begin{remark}
  As before, the ideals in $S'$ are not necessarily prime.
  In our implementation, which is restricted to~$F = \Q$,
  we initialize~$T$ with~$|T| = 10\ +$ unit rank of~$K$, and we increase it by
  adding random prime ideals of norm~$\approx (d\log|\Delta_K|)^2$;
  we initialize~$S_\Q$ with $S_\Q = \emptyset$, and we increase it by adding
  random primes of norm~$\approx (\log|\Delta_K|)^2$ that split completely
  in~$K$.
\end{remark}

\begin{proposition}\label{prop:algab2}
  If Algorithm~\ref{alg:ab2} terminates, then its output is correct.
\end{proposition}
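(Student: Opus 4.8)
The argument is a Buchmann-style certificate: one shows that the number $h_{p'}\,R_0\,h_p/u$ computed just before the final test of Algorithm~\ref{alg:ab2} is always $\hR_K$ divided by a nonnegative power of $p$, so that the test $h_{p'}R_0h_p/u>\hR_K/2$ forces that power to be trivial, and then all the indices measuring the gap between the computed data and the arithmetic of $K$ are trivial. Throughout we use that the denominator $d$ is a power of~$p$. Since $G$ is abelian, each subfield $K_i=K^{H_i}$ is normal over $F$ and each $\OO_{K_i,S}^\times$ is $\Gal(K/F)$-stable, so the group $U_S$ (resp.\ $U_0$) generated by the $\OO_{K_i,S}^\times$ (resp.\ the $\OO_{K_i}^\times$) coincides with the $\Z[G]$-submodule they generate; by Corollary~\ref{cor:relSunits}~(\ref{item:relSunits-generel}) its $d$-saturation is $\OO_{K,S}^\times$ (resp.\ $\OO_K^\times$), so $U_S$ and $U_0$ have full rank and the indices $[\OO_{K,S}^\times:U_S]$ and $[\OO_K^\times:U_0]$ are finite powers of~$p$. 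In particular $R_0=[\OO_K^\times:W\cdot U_0]\cdot\Reg_K$ with $[\OO_K^\times:W\cdot U_0]$ a power of~$p$, while $\hR_K$ — which is computed (to sufficient precision) in Step~(\ref{step:brauer}) from the subfield data via Proposition~\ref{prop:brauernormrel}, Proposition~\ref{prop:brauerzeta} and the analytic class number formula — is an exact algebraic combination of known quantities, independent of the unknowns.

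First, the coprime-to-$p$ part is exact. Applying Proposition~\ref{prop:apl_normrel} with $R=\Z[1/d]$ and noting $\Cl(K)\otimes R=\Cl(K)_{p'}$: the surjectivity of $\Psi$ shows that the ideals in $S'$ generate $\Cl(K)_{p'}$, and the injectivity of $\Phi$ shows that $\Phi$ identifies the computed group $\clcomp_{p'}$ with $\Cl(K)_{p'}$. Hence $\clcomp_{p'}\cong\Cl(K)_{p'}$ and $h_{p'}=|\Cl(K)_{p'}|$, whatever the outcome of the rest of the algorithm.

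For the $p$-part, write $\hR_K=h_{p'}\cdot|\Cl(K)_p|\cdot\Reg_K$; substituting $R_0=[\OO_K^\times:WU_0]\Reg_K$, the identity to prove becomes
\[
  h_p\cdot[\OO_K^\times:WU_0]\ =\ |\Cl(K)_p|\cdot u\cdot p^{-m}
\]
for some integer $m\ge 0$, all four factors being powers of~$p$. I would establish this by tracking, as a product of indices of finite $p$-groups, the three places where the computed $p$-part can fall short of $\Cl(K)_p$: (i) whether the set $S$ generates the $p$-part of $\Cl(K)$, contributing the index of $\langle S\rangle_p$ in $\Cl(K)_p$, where $\langle S\rangle$ is the subgroup generated by the classes of primes in $S$; (ii) how much of the valuation lattice $\varphi(\OO_{K,S}^\times)\subseteq\Z^S$ the saturation step recovers from $\varphi(U_S)$ using the extra vectors $\tfrac1d\varphi(x)$ for $x\in V_S$ — here a Grunwald--Wang argument in the spirit of Proposition~\ref{prop:gw2} shows that as soon as $T$ is large enough every such $x$ is a genuine global $d$-th power, so the sublattice $V$ of $\Z^S$ that is produced satisfies $V\subseteq\varphi(\OO_{K,S}^\times)$ with $p$-power index; and (iii) the index $[\OO_K^\times:WU_0]$ entering $R_0$, which over-counts the unit contribution unless one removes the part of it already visible among the $S$-units — this is exactly the role of $u=|V_0/(V_0^d\cdot(W\cap V_0))|$. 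The main obstacle is the careful bookkeeping showing that these contributions assemble into a single power $p^m$ with $u$ neither over- nor under-correcting, so that the displayed identity holds.

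Granting this, $h_{p'}R_0h_p/u=\hR_K\cdot p^{-m}$ with $m\ge 0$, so the test $h_{p'}R_0h_p/u>\hR_K/2$ forces $p^m<2$, hence $m=0$. Then each of the three indices above equals $1$: $S$ generates $\Cl(K)_p$, and $V=\varphi(\OO_{K,S}^\times)$ (note $[\varphi(\OO_{K,S}^\times):V]$ divides $[\OO_{K,S}^\times:U_S]$, a $p$-power, so triviality of $m$ forces $V=\varphi(\OO_{K,S}^\times)$ outright), whence $\clcomp_p=(\Z^S/V)_p=\langle S\rangle_p\cong\Cl(K)_p$ as groups. Combined with the second paragraph, the returned group $\clcomp_{p'}\times\clcomp_p$ is isomorphic to $\Cl(K)_{p'}\times\Cl(K)_p=\Cl(K)$, as claimed. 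Finite precision in the computation of $\hR_K$ and $R_0$ is harmless: the candidate values $\hR_K p^{-m}$ are separated by a factor at least~$2$, which is precisely why the threshold $\hR_K/2$, rather than an exact equality, can be checked rigorously and unconditionally.
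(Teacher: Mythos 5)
You have the right global strategy, which is also the paper's: the coprime-to-$p$ part is exact by Proposition~\ref{prop:apl_normrel}, and for the $p$-part one must show that the tested quantity always equals $\hR_K\,p^{-m}$ with $m\ge 0$, so that passing the threshold $\hR_K/2$ forces $m=0$ and hence exactness of every intermediate approximation. The problem is that this one-sided inequality \emph{is} the proposition, and you do not prove it: you explicitly defer it as ``the main obstacle is the careful bookkeeping''. Moreover, the one directional claim you do make, in item~(ii), goes the wrong way. You assert $V\subseteq\varphi(\OO_{K,S}^\times)$ on the grounds that ``as soon as $T$ is large enough'' every element of $V_S$ is a genuine global $d$-th power. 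The algorithm never certifies that $T$ is large enough --- detecting the failure of that assumption is precisely the job of the analytic test --- so a proof of ``terminates $\Rightarrow$ correct'' may not invoke it. The inclusion that holds unconditionally is the reverse one: for any $x\in\OO_{K,S}^\times$ we have $x^d\in U_S$ (the quotient $\OO_{K,S}^\times/U_S$ has exponent dividing $d$ by Corollary~\ref{cor:relSunits}), and $x^d$, being a global $d$-th power, automatically lies in $V_S=\ker f$; hence $\varphi(x)=\tfrac1d\varphi(x^d)\in V$ and $V\supseteq\varphi(\OO_{K,S}^\times)$ for \emph{every} choice of $T$. A too-small $T$ can only enlarge $V$ (spurious elements of $V_S$ contribute extra vectors $\tfrac1d\varphi(x)$) and therefore only make $h_p$ \emph{smaller} than $|\Cl(K)_p|$; it can never inflate it. Under your inclusion the error in $h_p$ would have the opposite sign to the error coming from $S$ possibly not generating $\Cl(K)_p$, the two could cancel, and no bound $m\ge 0$ would follow.

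The matching one-sided statement for the unit part is also needed and is equally elementary: the homomorphism $y\mapsto y^d$ from $\OO_K^\times$ to $V_0/\bigl(V_0^d\cdot(W\cap V_0)\bigr)$ is well defined (again $y^d\in U_0$ and $y^d$ passes all local tests) and its kernel is contained in $U_0W$ (if $y^d=v^dw$ with $v\in V_0$ and $w\in W$, then $y/v$ is a root of unity by Kronecker's theorem), so $[\OO_K^\times:U_0W]$ divides $u$ and $R_0/u=\Reg_K\cdot[\OO_K^\times:U_0W]/u$ is $\Reg_K$ divided by a nonnegative power of $p$. Both effects push $h_{p'}R_0h_p/u$ downward, giving $m\ge 0$ unconditionally; these two elementary observations are the content of the paper's proof and are exactly what your ``bookkeeping'' has to deliver. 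As written, with item~(ii) reversed and the assembly of the indices left open, your argument does not establish the proposition.
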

\begin{proof}
  By the surjectivity part of Proposition~\ref{prop:apl_normrel}, $S'$ generates
  the coprime-to-$p$ part of the class group of~$K$.
  By the injectivity part of Proposition~\ref{prop:apl_normrel}, $\clcomp_{p'}$
  is isomorphic to the coprime-to-$p$ part of the class group of~$K$; in
  particular~$h_{p'} = \lvert\Cl_{p'}\rvert$.
  At Step~\ref{step:saturationloop}, $U_S$ satisfies~$\order_{K,S}^\times/U_S$
  has exponent dividing $d$ by Corollary~\ref{cor:relSunits}. Therefore, the
  subgroup~$V$ of~$\Z^S$ computed at Step~\ref{step:valimage} contains the image
  of~$\order_{K,S}^\times$; in particular~$h_p$ is a divisor of the $p$-part of
  the subgroup of the class group generated by~$S$, and equals the $p$-class
  number if and only if~$S$ generates the $p$-part of~$\Cl(K)_p$ and~$V$ equals
  the image of~$\order_{K,S}^\times$.
  In addition~$\Reg_K$ is a $p$-power multiple of~$R_0/u$ by
  Corollary~\ref{cor:relSunits}.
  Therefore, if the algorithm terminates, then~$h_{p'}R_0h_p/u = h_K\Reg_K =
  \hR_K$, the group~$\clcomp_p$ is isomorphic to~$\Cl(K)_p$, and the output is
  correct.
\end{proof}

\begin{remark}
  It may happen that Algorithm~\ref{alg:ab2} does not terminate if~$K$ has an
  obstruction to the Hasse principle for~$d$-th powers. These obstructions are
  characterized by the Grunwald--Wang theorem, and can only happen if~$d\ge 8$
  is a power of~$2$.
  We currently do not know how to avoid this without computing an actual $d$-th
  root in~$K$.
\end{remark}

\begin{remark}\label{rem:simpleralgo}
  If one only needs the structure of the class group of~$K$ as an abstract abelian
  group, as opposed to having explicit ideal classes as generators, one may
  replace the use of Proposition~\ref{prop:apl_normrel} in
  Algorithms~\ref{alg:ab1} and~\ref{alg:ab2} by~\cite[Proposition~2.2 and
  Corollary 1.4]{Boltje}.
\end{remark}

\subsection{Unconditional computations}

Computations of class groups are typically done under GRH and later certified by
a different algorithm. The algorithms of Section~\ref{subsec:clgab} are
oblivious to the method used to compute the information in the subfields: if the
class group, regulator, unit and $S$-unit groups of the subfields are correct,
then so is the output of Algorithms~\ref{alg:ab1} and~\ref{alg:ab2}. However, it
can take a very long time to fully certify the information from the subfields.
In this section, we describe a method to certify the class group structure
assuming only partial information on the subfields.
This is easy in Algorithm~\ref{alg:ab1}: since the class group is computed via
Proposition~\ref{prop:apl_normrel} or Remark~\ref{rem:simpleralgo}, it is
correct as soon as the class groups of the subfields are correct.
Throughout this section, we will refer to the notations in
Algorithm~\ref{alg:ab2}, such as~$U_0$, $V_S$, etc.
It will be convenient to have, for various objects, a separate
notation for the correct one and the one that was computed; we will denote the
latter with a tilde: for instance~$\order_K^\times$ is the unit group
of~$\order_K$ and~$\widetilde{\order_K^\times}$ is the subgroup that was
computed.

\begin{proposition}\label{prop:certif}
  Let~$K/F$ be a normal extension of number fields with Galois group~$G$ as in
  Algorithm~\ref{alg:ab2}.
  Let
  \[
    c_0\Ind_{G/1}{\triv_{1}} = \sum_{i} c_i \Ind_{G/{H_i}}(\triv_{H_i})
  \]
  be the Brauer relation used in Step~\ref{step:brauer} with~$c_i\in\Z$ and $c_0>0$.
  Assume the following:
  \begin{enumerate}
    \item\label{ass:cl} for all~$i$, the computed class group~$\widetilde{\Cl(K_i)}$ is correct;
    \item\label{ass:tu} for all~$i$, the computed group of roots of unity in~$K_i$ is correct, and the
      computed group of roots of unity in~$K$ is correct;
    \item\label{ass:fu} for all~$i$, the computed unit group~$\widetilde{\order_{K_i}^\times}$
      is a subgroup of~$\order_{K_i}^\times$ of finite index at
      most~$B_i \ge 1$ and of index coprime to~$p$;
    \item\label{ass:fsu} for all~$i$, the computed $S$-unit group~$\widetilde{\order_{K_i,S}^\times}$
      is a subgroup of~$\order_{K_i,S}^\times$ of finite index coprime to~$p$;
    \item\label{ass:ratio} at the end of the algorithm, we have
      \[
        \left|
        \Bigl(\frac{\widetilde{h_{p'}}\widetilde{R_0}\widetilde{h_p}/\tilde{u}}{\widetilde{\hR_K}}
        \Bigr)^{c_0}
        - 1 \right| < d^{-(|S|+r_0)c_0}\prod_i B_i^{-|c_i|}.
      \]
  \end{enumerate}
  Then the class group output by Algorithm~\ref{alg:ab2} is correct.
\end{proposition}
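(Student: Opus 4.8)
The plan is to measure precisely how far the quantities computed by Algorithm~\ref{alg:ab2} lie from their true values, and to show that assumption~(\ref{ass:ratio}) is strong enough to kill these discrepancies in every place that affects the class group. Throughout, put $e_i = [\order_{K_i}^\times : \widetilde{\order_{K_i}^\times}]$, so $e_i\le B_i$ and $p\nmid e_i$ by~(\ref{ass:fu}), and note that the indices $[\order_{K_i,S}^\times : \widetilde{\order_{K_i,S}^\times}]$ are also coprime to~$p$ by~(\ref{ass:fsu}). By~(\ref{ass:cl}) and the injectivity and surjectivity statements of Proposition~\ref{prop:apl_normrel}, the computed group $\clcomp_{p'}$ is isomorphic to $\Cl(K)_{p'}$; hence $\widetilde{h_{p'}} = |\Cl(K)_{p'}|$, the prime-to-$p$ factor of the output is already correct, and by~(\ref{ass:tu}) the numbers of roots of unity occurring in the formulas are correct. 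It thus remains to control the $p$-part.

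The next step is to identify the two ``approximate $\hR_K$'' quantities. Applying Proposition~\ref{prop:brauerzeta} to the Brauer relation of Step~\ref{step:brauer} together with the analytic class number formula (the archimedean factors, the conductor--discriminant contributions and, by~(\ref{ass:tu}), the root-of-unity factors all being exact) gives $\widetilde{\hR_K}^{c_0} = \prod_i\widetilde{\hR_{K_i}}^{c_i} = \prod_i(e_i\hR_{K_i})^{c_i} = \bigl(\prod_i e_i^{c_i}\bigr)\hR_K^{c_0}$, so $(\widetilde{\hR_K}/\hR_K)^{c_0} = \prod_i e_i^{c_i}$ is a positive rational which in lowest terms is coprime to~$p$, with numerator dividing $\prod_{c_i<0}B_i^{-c_i}$ and denominator dividing $\prod_{c_i>0}B_i^{c_i}$. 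For $N := \widetilde{h_{p'}}\,\widetilde{R_0}\,\widetilde{h_p}/\tilde u$, Corollary~\ref{cor:relSunits} shows that $\order_K^\times$ is the $d$-saturation of the group $U_0$ generated by the $\order_{K_i}^\times$ and that $\order_K^\times/U_0$ has exponent dividing~$d$; since $d$ is a prime power, $\order_K^\times/U_0$ is a finite $p$-group, whence $\widetilde{R_0} = m_0 p^a\Reg_K$ with $m_0\mid\prod_i e_i$ coprime to~$p$ and $p^a$ the index of the unit lattice of $U_0$ inside that of $\order_K^\times$. Likewise $\tilde u$ is a power of~$p$, with $v_p(\tilde u)$ bounded in terms of $r_0$ and $v_p(d)$, and $v_p(\tilde u)=a$ exactly when the set $T$ of auxiliary primes was large enough to detect all $d$-th powers in $\widetilde{U_0}$; and $\widetilde{h_p}$ is a power of~$p$ whose ratio to $h_{K,p}:=|\Cl(K)_p|$ is a power of~$p$ with exponent bounded in terms of $|S|$ and $v_p(d)$ (using that $\order_{K,S}^\times/\widetilde{U_S}$ has a filtration by a coprime-to-$p$ piece and a piece of exponent dividing~$d$, together with $d\widetilde V\subseteq\operatorname{im}\widetilde{U_S}\subseteq\widetilde V$), equal to~$1$ exactly when $S$ generates $\Cl(K)_p$ and $\widetilde V$ reproduces the valuation lattice of $\order_{K,S}^\times$ on the $p$-part. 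Combining, $N/\hR_K = m_0\, p^{-c}$ for some $c\in\Z$ with $p^{\max(c,0)}\le d^{\,|S|+r_0}$.

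Now combine the two computations. We have $(N/\widetilde{\hR_K})^{c_0} = (N/\hR_K)^{c_0}(\hR_K/\widetilde{\hR_K})^{c_0} = A/B$ with $A,B$ coprime positive integers and $B\mid p^{\max(c,0)c_0}\prod_{c_i>0}e_i^{c_i}$, so $B\le d^{(|S|+r_0)c_0}\prod_i B_i^{|c_i|}$. If $A\neq B$ then $|A/B-1|\ge 1/B$, contradicting~(\ref{ass:ratio}); hence $A=B$, that is $N=\widetilde{\hR_K}$ and $(N/\hR_K)^{c_0}=\prod_i e_i^{c_i}$ is coprime to~$p$. Since $(N/\hR_K)^{c_0}=m_0^{c_0}p^{-cc_0}$, this forces $c=0$, and by the monotonicity in the size of $T$ and $S$ of the two ``exactly when'' conditions above, $c=0$ is possible only when $v_p(\tilde u)=a$ and $\widetilde{h_p}=h_{K,p}$. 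The equality $\widetilde{h_p}=h_{K,p}=|\Cl(K)_p|$ forces $S$ to generate $\Cl(K)_p$ and, using~(\ref{ass:fsu}) and $v_p(\tilde u)=a$ (so that the $d$-saturation steps are complete on the $p$-part), forces $\widetilde V$ and $\operatorname{im}\order_{K,S}^\times$ to have the same $p$-part; tracing this through the exact sequence $\order_{K,S}^\times\to\Z^S\to\Cl(K)_S\to 0$ and comparing orders shows that $\clcomp_p\cong\Cl(K)_p$. Together with $\clcomp_{p'}\cong\Cl(K)_{p'}$, this proves that the output $\clcomp_{p'}\times\clcomp_p$ is isomorphic to $\Cl(K)$.

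The crux is the middle paragraph: one must carefully identify $\widetilde{R_0}$, $\tilde u$ and $\widetilde{h_p}$ with $\Reg_K$ and the $p$-class-number up to explicit $p$-power and coprime-to-$p$ factors---in particular that $\tilde u$ is a $p$-power no smaller than the honest $d$-saturation index and that the prime-to-$p$ errors of~(\ref{ass:fsu}) genuinely drop out of the $p$-part---and one must extract from the algorithm's termination test the bound $p^{c}\le d^{|S|+r_0}$, so that the denominator $B$ above is dominated by the explicit quantity of~(\ref{ass:ratio}). Once this bookkeeping is in place, the rest is a one-line divisibility argument.
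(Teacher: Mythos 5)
Your proposal follows essentially the same route as the paper's proof: you factor the tested ratio into a prime-to-$p$ rational with denominator bounded by $\prod_i B_i^{|c_i|}$ (coming from the Brauer relation and the unit indices $e_i$) times a $p$-power bounded by $d^{(|S|+r_0)c_0}$ (coming from the saturation defects in $\tilde u$ and $\widetilde{h_p}$), use assumption~(5) to force the ratio to equal $1$, and then use the one-sidedness of the errors to conclude that both $p$-power defects vanish; this matches the paper's argument via $\rho_1,\dots,\rho_4$, including the delicate step (your ``monotonicity'') that the paper handles at the same level of detail.
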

\begin{proof}
  Since the~coprime-to-$p$-part of the class group is computed via
  Proposition~\ref{prop:apl_normrel} or Remark~\ref{rem:simpleralgo}, it is
  correct by Assumption~(\ref{ass:cl}). We now focus on the~$p$-part.
  Let~$U_0$ be the subgroup of~$\order_K^\times$ generated by
  the~$\order_{K_i}^\times$, and define~$U_S$ similarly for~$S$-units.
  Let~$R_0$ be the regulator of~$U_0$. Let~$h_p$ and~$h_{p'}$ be the $p$-part
  and coprime-to-$p$-part of the class number of~$K$. Let~$u =
  [\order_K^\times/W : U_0/W]$ where~$W$ is the group of
  roots of unity in~$K$. Let~$\hR_K = h_K\Reg_K$. We have
  \begin{equation}\label{eq:hRsat}
    h_{p'}R_0h_p/u = \hR_K.
  \end{equation}
  Let
  \[
    \rho_1 =
    \frac{\widetilde{h_{p'}}\widetilde{R_0}\widetilde{h_p}/\tilde{u}}{\widetilde{\hR_K}}
  \]
  be the quantity appearing in Assumption~(\ref{ass:ratio}), which we expect to be~$1$.
  Since the coprime-to-$p$-part of the class group is correct, we
  have~$\widetilde{h_{p'}} = h_{p'}$.
  Let~$w_i$ be the number of roots of unity in~$K_i$ and~$w_0$ the number of
  roots of unity in~$K$.
  By the analytic class number formula and Proposition~\ref{prop:brauerzeta}, we
  have
  \[
    \left(\frac{\hR_K}{w_0}\right)^{c_0} =
    \prod_i\left(\frac{h_{K_i}\Reg_{K_i}}{w_i}\right)^{c_i}.
  \]
  By Assumptions~(\ref{ass:cl}) and~(\ref{ass:tu}) and Step~\ref{step:brauer} we have
  \[
    \left(\frac{\widetilde{\hR_K}}{w_0}\right)^{c_0} =
    \prod_i\left(\frac{h_{K_i}\widetilde{\Reg_{K_i}}}{w_i}\right)^{c_i}.
  \]
  The quotient of these two equations gives two expressions for a quantity that
  we denote by~$\rho_2$:
  \[
    \rho_2 = \left(\frac{\widetilde{\hR_K}}{\hR_K}\right)^{c_0} =
    \prod_i \left(\frac{\widetilde{\Reg_{K_i}}}{\Reg_{K_i}}\right)^{c_i}.
  \]
  Then by Assumption~(\ref{ass:fu}), $\rho_2$ is a
  positive rational number whose numerator and denominator are bounded
  by~$\prod_i B_i^{|c_i|}$ and such that~$v_p(\rho_2) = 0$.
  Let
  \[
    \rho_3 = \frac{\widetilde{R_0}\widetilde{h_p}/\tilde{u}}{R_0h_p/u},
  \]
  which is a positive rational number since~$\widetilde{U_0}$ is a finite index
  subgroup of~$U_0$. By Assumption~(\ref{ass:fu}), the ratio
  $
    \frac{\widetilde{R_0}}{R_0}
  $
  is an integer coprime to~$p$.
  Both~$h_p/\widetilde{h_p}$ and~$u/\tilde{u}$ are rational numbers that are
  powers of~$p$.
  Moreover, by Assumption~(\ref{ass:fsu}), they are integers (see also proof of
  Proposition~\ref{prop:algab2}): $\widetilde{h_p}$ can be
  strictly smaller than~$h_p$ only if~$S$ does not generate the $p$-part of
  the class group or if~$T$ is insufficient to correctly detect the $d$-th
  powers in~$\order_{K,S}^\times$, yielding extra elements
  in the computed group of principal ideals;
  similarly~$\tilde{u}$ can be strictly smaller than~$u$ only if~$T$ is
  insufficient to correctly detect the $d$-th powers in~$\order_K^\times$.
  In particular, if~$h_p = \widetilde{h_p}$ and~$u=\tilde{u}$ then the computed
  $p$-part of the class group is correct.
  The ratio
  $
    \rho_4 = \dfrac{h_p/u}{\widetilde{h_p}/\tilde{u}}
  $
  is therefore also an integer and a power of~$p$; moreover, we have~$\rho_4 \le
  d^{|S|+r_0}$ by construction.
  By equation~(\ref{eq:hRsat}) we have
  \[
    \rho_1^{c_0} = \rho_2^{-1}\rho_3^{c_0}.
  \]
  Putting together the previous observations, we obtain that~$\rho_1^{c_0}$ is a
  rational number with denominator at most
  \[
    B = d^{(|S|+r_0)c_0}\prod_i B_i^{|c_i|}
  \]
  and we have~$v_p(\rho_1) = v_p(\rho_3) = -v_p(\rho_4)$.
  Finally, by Assumption~(\ref{ass:ratio}), we have
  $|\rho_1^{c_0}-1| < B^{-1}$, so that~$\rho_1^{c_0}=1$.
  This proves that~$v_p(\rho_4) = -v_p(\rho_1) = 0$ and therefore~$\rho_4=1$,
  proving that the computed $p$-part of the class group of~$K$ is correct.
\end{proof}

\begin{theorem}\label{thm:unconditional-cyclo}
  The class numbers and class groups in Tables~\ref{tab:certified-cyclo}
  and~\ref{tab:certified-cyclo-complete} are correct.
\end{theorem}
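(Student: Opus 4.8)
The plan is to verify, for each field in the tables, the hypotheses of the unconditional certification criterion of Proposition~\ref{prop:certif}, after first running the conditional computation of Section~\ref{subsec:clgab}; the proof is thus in large part a guided computation. Every cyclotomic field $\Q(\zeta_n)$ is abelian over $\Q$, so for each $n$ in the tables we decompose $\Gal(\Q(\zeta_n)/\Q)\cong C\times Q$ with $C$ the largest cyclic factor and fall into one of the three cases of Theorem~\ref{thm:optimalabelianrels}: when $|Q|$ has at least two prime divisors we invoke Algorithm~\ref{alg:ab1}, when $Q$ is a nontrivial $p$-group we invoke Algorithm~\ref{alg:ab2} with $d$ a power of $p$, and when $Q=1$ we fall back on a direct class group computation. (As usual only the plus part $h^+$ and the structure of the relevant class group are at stake; the relative class number is handled by the classical analytic formula.) For the fields falling into the first case the certification is immediate: by Proposition~\ref{prop:apl_normrel} the output of Algorithm~\ref{alg:ab1} is correct as soon as the class groups of the subfields are, so one only recurses.

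For the fields in the second case, I would first perform the GRH computation recorded under $T_1$, producing the candidate class group together with the auxiliary data $\widetilde{\hR_K}$, $\widetilde{R_0}$, $\widetilde{h_p}$, $\widetilde{h_{p'}}$, $\tilde u$, $r_0$, $|S|$, and candidate unit and $S$-unit groups of the proper subfields $K_i=K^{H_i}$ occurring in the relation. I would then check the five assumptions of Proposition~\ref{prop:certif} recursively, descending through the real abelian subfields involved: correctness of $\widetilde{\Cl(K_i)}$ and of the groups of roots of unity; that $\widetilde{\order_{K_i}^\times}$ and $\widetilde{\order_{K_i,S}^\times}$ are finite-index subgroups of the true unit, resp.\ $S$-unit, groups of index coprime to $p$, together with an explicit upper bound $B_i$ for the index of $\widetilde{\order_{K_i}^\times}$; and the final numerical inequality. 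The recursion bottoms out at abelian fields small enough that these certifications are done directly: one bounds $h_{K_i}\Reg_{K_i}$ from the analytic class number formula with an explicit \emph{unconditional} error term, compares with the computed value $\widetilde{h_{K_i}}\widetilde{\Reg_{K_i}}$, and uses that the computed units are genuine units — so $\widetilde{\Reg_{K_i}}$ is an integer multiple of $\Reg_{K_i}$ — to pin down $\widetilde{\Cl(K_i)}$, the bound $B_i$, and the coprimality of the index to $p$.

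The main obstacle is verifying the last assumption of Proposition~\ref{prop:certif}, namely
\[
  \left|\Bigl(\tfrac{\widetilde{h_{p'}}\widetilde{R_0}\widetilde{h_p}/\tilde u}{\widetilde{\hR_K}}\Bigr)^{c_0}-1\right| < d^{-(|S|+r_0)c_0}\prod_i B_i^{-|c_i|},
\]
with rigorous numerics: $\widetilde{\hR_K}$ is assembled via the Brauer relation (Proposition~\ref{prop:brauerzeta}) from the certified class numbers and the computed regulators of the subfields, and these regulators, along with $\widetilde{R_0}$, must be evaluated to a guaranteed precision that beats the (very small) right-hand side. This is the source of the large certification times $T_2$ in the tables — occasionally $T_2$ greatly exceeds $T_1$, as for $n=2520$. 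A secondary subtlety, already noted after Proposition~\ref{prop:algab2}, is that when $Q$ is a $2$-group with $d\ge 8$ an obstruction to the Hasse principle for $d$-th powers could in principle prevent the auxiliary set $T$ from detecting all $d$-th powers in $\order_{K,S}^\times$; one checks that this does not occur for the fields in the tables. The remaining ingredients — linear algebra over $\Z$, Hermite normal forms, and discrete logarithms in the residue fields $\F_\pg^\times$ for $\pg\in T$ — are routine.
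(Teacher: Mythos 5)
Your proposal is correct and follows essentially the same route as the paper: run the GRH-conditional Algorithms~\ref{alg:ab1} and~\ref{alg:ab2}, certify the subfield data (class groups, roots of unity, unit and $S$-unit indices with bounds $B_i$ — the paper delegates this to a modified \texttt{bnfcertify}, which is exactly the unconditional analytic-class-number-formula comparison you describe), verify the numerical inequality of Proposition~\ref{prop:certif} to guaranteed precision, and deduce $h^+$ from the certified class group together with the analytically computed minus part.
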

\begin{proof}
  We apply our~\textsc{Pari/GP} implementation of Algorithms~\ref{alg:ab1}
  and~\ref{alg:ab2} with GRH-conditional computations in the subfields. Then we
  verify the hypotheses of Proposition~\ref{prop:certif}:
  Assumption~(\ref{ass:tu}) is automatically guaranteed by the~\textsc{Pari/GP}
  functions (as this can be done in polynomial time);
  Assumptions~(\ref{ass:cl}), (\ref{ass:fu}) and~(\ref{ass:fsu}) are checked
  with a modified version of the~\textsc{Pari/GP} function \texttt{bnfcertify};
  Assumption~(\ref{ass:ratio}) is checked by computing the relevant quantity up
  to sufficiently high accuracy. This proves that the class group structures are
  correct. We compute the minus part of the class number by the analytic class
  number formula~\cite[Theorem~4.17]{Washington}, and we deduce the plus part of the class
  number from it.
\end{proof}

\section{Numerical examples}\label{sec:examples}

We have implemented the algorithms from Section~\ref{sec:algo} for computing
$S$-unit and class groups in~\textsc{Hecke}~\cite{Hecke} and
\textsc{Pari/GP}~\cite{PARI2}. More precisely, we implemented in \textsc{Pari/GP} the
algorithms of Section~\ref{subsec:clgab} (with the variant of
Remark~\ref{rem:simpleralgo}) that are restricted to abelian groups, and in
\textsc{Hecke} the algorithms of Sections~\ref{subsec:constrrel}, \ref{subsec:sunit}
and~\ref{sec:algoclgp} that can handle arbitrary groups.
The \textsc{Pari/GP} implementation is available at~\cite{abbnf}.
In this section we report on some numerical
examples obtained using these implementations.
All the computations performed in this section assume GRH.

We begin with a non-abelian example taken from the database of Kl\"uners and Malle~(\cite{Kluners2001}).

\begin{example}\label{ex:hecke}
      The splitting field $K$ of the irreducible polynomial $f = x^{10} + x^8 - 4x^2 + 4 \in \Q[x]$
      has Galois group $C_2 \times A_5$ and discriminant $2^{210} \cdot  17^{80} \approx 10^{161}$.
      Our implementation in \textsc{Hecke} shows that the class group of~$K$ is
      trivial.
      As we use the algorithm using $S$-units of Section~\ref{sec:algoclgp}, we also obtain generators for the unit group and
      obtain the value
      \[ 589229345997607340093151477907958.37876... \]
      for the regulator of $K$.
      The algorithm uses a relation of $C_2 \times A_5$ with denominator~$1$
      involving subfields of degree at most $60$.
      The computation takes $6$ hours on a single core machine.
\end{example}

The remaining examples all concern number fields with abelian Galois group.
Here, we use the Algorithms of Section~\ref{subsec:clgab}.

\begin{example}
  Let~$K = \Q(\zeta_{216})$, which has Galois group over~$\Q$ isomorphic to~$C_{18}\times
  C_2 \times C_2$, degree~$72$ and discriminant~$\approx 10^{129}$.
  Our \textsc{Pari/GP} implementation computes in~$6$ seconds that the class group
  of~$K$ is isomorphic to~$C_{1714617} \cong C_{3^2}\times C_{19} \times C_{37}
  \times C_{271}$. \textsc{Pari/GP} computes the same result in~$15$ minutes,
  and \textsc{magma} in $5$ hours.
  Our algorithm uses a relation with denominator~$4$, and starts by computing
  the class group and units of~$8$ subfields of degree up to~$18$. It then
  starts with~$S=\emptyset$, which turns out to be a generating set for
  the~$2$-class group of~$K$. The algorithm therefore only needs to compute a
  single kernel modulo~$4$ to determine the correct class group at~$2$; the
  units of the subfields generate a subgroup of index~$2^{11}$
  of~$\OO_K^\times$.
\end{example}

\begin{example}\label{ex:parigp}
  Let~$K = \Q(\zeta_{6552})$ which has Galois group over~$\Q$ isomorphic
  to~$C_{12} \times C_6^2 \times C_2^2$, degree~$1728$ and
  discriminant~$2^{3456} \cdot 3^{2592} \cdot 7^{1440} \cdot 13^{1584} \approx
  10^{5258}$. Our \textsc{Pari/GP} implementation computes in~$4.2$ hours that
  the class group of~$K$ is isomorphic to
  \begin{eqnarray*}
    & &
    C_{e} \times C_{123903346647650690244963498417984355147621683400320} \\
    & & \times C_{5827775875747592369293192320} \times C_{2098524198141572423040} \\
    & & \times C_{33847164486154393920} \times C_{7383876252480} \times C_{101148989760}^2 \\
    & & \times C_{50574494880}^2 \times C_{276363360}^5 \times C_{7469280}^2
    \times C_{3734640}^8 \times C_{196560}^2 \\
    & & \times C_{98280} \times C_{32760}^4 \times C_{6552}^{26}
    \times C_{3276}^2 \times C_{252} \times C_{84}^3 \times C_{12}^{29}
    \times C_{6}^8 \times C_2^{11}
  \end{eqnarray*}
  where
  \begin{eqnarray*}
    e &=&
        349380029706737059104248223565319692883897548638392856641627842 \\
    & & 662891732318286799812329621077189995594165744361859090214550165 \\
    & & 734555558870589729949013150675968232635365760,
  \end{eqnarray*}
  and that~$h_{6552}^+ = 70695077806080 = 2^{24}\cdot 3^3 \cdot 5 \cdot 7^4
  \cdot 13$.
  Our algorithm uses a relation with denominator~$1$ involving~$62$ subfields of
  degree at most~$192$. The computations in those subfields recursively uses
  relations with denominators supported at a single primes ($2$ or~$3$),
  involving a total of~$672$ subfields of degree at most~$12$.
\end{example}

\bibliographystyle{plain}
\bibliography{biblio}

\newpage

\appendix

\section{Class groups of large cyclotomic fields}

\begin{table}[h]
  \caption{Class groups of cyclotomic fields
  $\Q(\zeta_n)$}\label{tab:certified-cyclo-complete}
  $n$ conductor, $\varphi(n)$ degree, $h^+$ plus part of class number, $\Cl$ list of cyclic
  factors of the class group, with multiplicities denoted by exponents.
  \hspace*{-1.3cm}\begin{tabular}[t]{cccl}
    \hline
    $n$ & $\varphi(n)$ & $h^+$ & $\Cl$ \\
    \hline

    $255$ & $128$ & $1$ & $[198604775280, 85]$ \\
    $272$ & $128$ & $2$ & $[38972318856432, 48, 16^2]$ \\
    $320$ & $128$ & $1$ & $[2679767564295, 51, 17^2]$ \\
    $340$ & $128$ & $1$ & $[189394569680, 80^2]$ \\
    $408$ & $128$ & $2$ & $[383350665840, 48, 16^2, 2]$ \\
    $480$ & $128$ & $1$ & $[208430880, 1680, 84, 21]$ \\
    $273$ & $144$ & $1$ & $[112080696, 11544, 8^2, 4^3, 2^2]$ \\
    $315$ & $144$ & $1$ & $[58787820, 606060, 28, 4]$ \\
    $364$ & $144$ & $1$ & $[1212120, 4680, 1560^2, 78, 2]$ \\
    $456$ & $144$ & $1$ & $[4536718103988, 1197, 171, 19]$ \\
    $468$ & $144$ & $1$ & $[130450320, 102960, 468, 117, 3^2]$ \\
    $504$ & $144$ & $4$ & $[39312, 13104, 252^3, 126, 2^3]$ \\
    $520$ & $192$ & $4$ & $[3008481840, 808080, 21840, 80, 16, 8^3, 4^5, 2^5]$ \\
    $560$ & $192$ & $1$ & $[334945469854703482320, 302640, 60, 3^2]$ \\
    $624$ & $192$ & $1$ & $[5435580272293080, 79560, 79560, 195, 65, 5]$ \\
    $720$ & $192$ & $1$ & $[145097043589680, 261908020920, 390, 15]$ \\
    $780$ & $192$ & $1$ & $[3256946160, 208^3, 104, 8^5, 4^4, 2^4]$ \\
    $840$ & $192$ & $1$ & $[43161155222640, 404040, 1560, 780, 2^2]$ \\

    $455$ & $288$ & $1$ & $[2552186819979516720, 39582182640, 161616, 7696, 3848,$ \\
      &&&
      $52^4, 4, 2^4]$
      \\

    $585$ & $288$ & $1$ & $[20973979753397601680869341964560, 7405922160, 10920,$ \\
      &&&
      $5460, 52, 2^2]$
      \\

    $728$ & $288$ & $20$ & $[127601328297438646560, 241506720, 622440, 4680^3,
      312^2,$ \\
      &&&
      $24, 12^2, 6^3, 2^3]$
      \\

    $936$ & $288$ & $16$ & $[380292996258447608175840, 4957112160, 6552^2,
      3276^3,$ \\
      &&&
      $156^2, 12^2]$
      \\

    $1008$ & $288$ & $16$ & $[13191784813235120785056, 2542176, 6552^2, 3276^4,
      156^2,$ \\
      &&&
      $52, 2^2]$
      \\

    $1092$ & $288$ & $1$ & $[1873781327428920, 23030280, 10920^2, 2184, 312^2,
      104^2, 8^4,$ \\
      &&&
      $4^5, 2^6]$
      \\

    $1260$ & $288$ & $1$ & $[302534211670334280, 8464152747960, 32760, 10920,
      2184,$ \\
      &&&
      $168^2, 8, 4^2, 2^4]$
      \\

    $1560$ & $384$ & $8$ & $[397816187272397451623520, 98585760, 1616160, 43680,$ \\
      &&&
      $21840, 1040^4, 208, 16^3, 8^7, 4^{10}, 2^{10}]$
      \\

    $1680$ & $384$ & $1$ & $[20781830230484468879265337592380391572320,156767520,$ \\
      &&&
      $22395360, 605280, 3120, 1560^3, 40, 4^2, 2]$
      \\

    $2520$ & $576$ & $208$ &
      $[197258297436388965346129923485464425191214071085120,$
      \\
      &&&
      $112607968430745627840, 366503875920, 2424240, 65520,$
      \\
      &&&
      $32760^5, 6552, 2184^3, 312, 8^4, 4^5, 2^{14}]$
      \\
    \hline
  \end{tabular}
\end{table}

\end{document}